\allowdisplaybreaks \numberwithin{equation}{section}
\newtheorem{theorem}{Theorem}[section]
\newtheorem{proposition}[theorem]{Proposition}
\newtheorem{corollary}[theorem]{Corollary}
\newtheorem{lemma}[theorem]{Lemma}
\newtheorem{claim}[theorem]{Claim}
\theoremstyle{definition}
\newtheorem{definition}[theorem]{Definition}
\theoremstyle{remark}
\newtheorem{remark}[theorem]{Remark}
\DeclareMathOperator{\Sym}{Sym}
\DeclareMathOperator{\Proj}{Proj}
\DeclareMathOperator{\Pic}{Pic}
\DeclareMathOperator{\End}{End}
\DeclareMathOperator{\seg}{\mathbf{s}}
\def\bH{{\mathbb H}}
\def\bP{{\mathbb P}}
\def\bQ{{\mathbb Q}}
\def\bZ{{\mathbb Z}}
\newcommand{\vac}{{\bf{1}}}
\def\cI{{\mathcal I}}
\def\cO{{\mathcal O}}
\def\leq{{\leqslant}}
\def\geq{{\geqslant}}
\DeclareMathOperator{\rk}{rk}
\begin{document}


\title[Some families of big and stable bundles]{Some families of big and stable bundles on $K3$ surfaces and on their Hilbert schemes of points}

\author{Gilberto Bini}
\address{Gilberto Bini, Dipartimento  di Matematica ed Informatica, Universit\`{a} degli Studi di Palermo, Via Archirafi 34, 90133 Palermo -- Italy}
\email{gilberto.bini@unipa.it}

\author{Samuel Boissi\`ere}
\address{Samuel Boissi\`ere, Laboratoire de Math\'ematiques et Applications, Universit\'e de Poitiers, 
	UMR 7348 du CNRS, Site du Futuroscope, TSA 61125, 11 bd Marie et Pierre Curie, 86073 Poitiers Cedex 9, France}
\email{samuel.boissiere@univ-poitiers.fr}

\author{Flaminio Flamini}
\address{Flaminio Flamini, Dipartimento  di Matematica, Universit\`{a} degli Studi di Roma ``Tor Vergata", Viale della Ricerca Scientifica 1, 00133 Roma -- Italy}
\email{flamini@mat.uniroma2.it}

\thanks{This collaboration has benefitted from funding of the MIUR Excellence Department Project awarded to the Department of Mathematics,
University of Rome Tor Vergata (CUP: E83-C18000100006). The first and the third authors are members of INdAM--GNSAGA. The authors wish to 
warmly thank Sandra Di Rocco, Andreas L. Knutsen, Angelo F. Lopez, Dragos Oprea, Antonio Rapagnetta and Alessandra Sarti for useful discussions and remarks, and the anonymous referee for his/her very careful reading and helpful comments.}

\begin{abstract} Here we investigate meaningful families of vector bundles on a very general polarized $K3$ surface $(X,H)$ and on the corresponding {\em Hyper--K\"ahler variety} given by the Hilbert scheme of points $X^{[k]}:= {\rm Hilb}^k(X)$, for any integer $k \geqslant 2$. In particular, we prove results concerning bigness and stability of such bundles. 
	First, we give conditions on integers $n$ such that the twist of the tangent bundle of $X$ by the line bundle $nH$ is big and stable on~$X$; we then prove a similar result for a natural twist of the tangent bundle of $X^{[k]}$. Next, we prove global generation, bigness and stability results for 
	{\em tautological bundles} on $X^{[k]}$ arising either from line bundles or from {\em Mukai-Lazarsfeld} bundles, as well as from {\em Ulrich} bundles on $X$, using a careful analysis on Segre classes and numerical computations for $k = 2, 3$.
\end{abstract}

\maketitle


\section{Introduction}\label{intro} 

Let $M$ be an $n$-dimensional smooth projective variety over the field of complex numbers. A line bundle $L$ on $M$ is big if the Iitaka dimension $\kappa(M,L)$ is maximal. If $L$ is in particular nef, the bigness of $L$ can be deduced from a numerical criterion, that is, $L$ is big if and only if the top intersection of $c_1(L)$ is a positive integer. If we take into account vector bundles $E$ of rank greater than or equal to $2$, there are various notions of bigness: see, for instance, \cite{MU} for a comprehensive survey. Here we focus on $L$-bigness, whose definition is modeled on those of ampleness and nefness for vector bundles. More specifically, $E$ is $L$-big if and only if the tautological bundle ${\mathcal O}_{{\mathbb P}(E)}(1)$ is a big line bundle on the projective bundle $\pi: {\mathbb P}(E) \to M$ of one-dimensional quotients of $E$. Henceforth, for the sake of notation, by big we mean $L$-big.

Notably, the numerical criterion for bigness of nef line bundles induces a characterization for bigness of nef vector bundles of higher rank. Proposition \ref{prop:bigness} recalls this numerical criterion which states that a nef rank $r$ vector bundle~$E$ on~$M$ is big if and only if the number $(-1)^n \int_M s_n(E)$ is positive, where $s_n(E)$ is the top Segre class of the vector bundle $E$. In what follows, we will apply this characterization to globally generated vector bundles, which are in fact nef: see Remark~\ref{rem:numerical}. 

In \cite{BF}, we introduced cohomological criteria on algebraic surfaces and fourfolds in order to verify the numerical characterization mentioned before. What's more, we found out examples of big vector bundles (split and unsplit) on Hirzebruch surfaces and investigated the bigness of some families of Mukai-Lazarsfeld bundles on regular fourfolds. These criteria were also applied in \cite{AL} to describe non-big Ulrich bundles on a complex irreducible smooth projective surface. For the sake of completeness, we recall that $E$ is an Ulrich bundle on $M$ if $H^i(M, E(-p))$ vanishes for $i \geq 0$ and $1 \leq p \leq \dim(M)$.

In the present paper, we investigate bigness of vector bundles on other families of varieties, namely {\em Hyper--K\"ahler varieties}. Previous results on base loci of big and nef line bundles on them were obtained in~\cite{ur}. The Beauville-Bogomolov Theorem (see \cite[Theorem~1]{BB}) states that, up to a finite cover, any compact K\"ahler manifold with trivial first Chern class (in the rational cohomology) can be decomposed as a product of complex tori, (strict) Calabi-Yau varieties and Hyper--K\"ahler varieties. By definition, the latter have even complex dimension. The first examples are thus given by $K3$ surfaces $X$. For the purposes of what follows, we shall focus on very general polarized $K3$ surfaces $(X, H)$ where $H$ is an ample divisor on $X$ such that $H^2=2g-2 \geq 2$. By general results (see for instance \cite[Theorem VIII 7.3 on page 366]{BPV}) there exists a smooth, irreducible $19$-dimensional moduli space ${\mathfrak F}_g$ which parametrizes (isomorphism classes of) smooth, primitively polarized pairs $(X,H)$ of genus $g$. A very general point of~${\mathfrak F}_g$ corresponds to a {\em very general polarized} $K3$ surface $(X,H)$.

Any rank $r$ vector bundle $E$ on $X$ defines a {\em Mukai vector} $v:=v(E)$: see \S\ref{bundlesK3} for the definition. Correspondingly, one denotes by $M_H(v)^s$ the moduli space of $\mu_H$-stable vector bundles on $(X,H)$ associated with the Mukai vector~$v$, where $\mu_H(E):=\frac{\int_X c_1(E) \cdot H}{r}$is the $H$-slope of $E$. Moreover, let us consider the vector bundle $T_X(n):= T_X \otimes H^{\otimes n}$ on $(X,H)$ and denote by $v_{g,n}$ the associated Mukai vector, where $g$ is the genus of $(X,H)$. Then Theorem~\ref{thm:tangK3} lists pairs of possible integers $(g,n)$ such that $T_X(n)$ is big. Moreover, for any such pair the moduli space $M_H(v_{(g,n)})^s$ is a smooth, quasi-projective variety of dimension $90$ whose general element parametrizes a rank $2$ vector bundle with Mukai vector $v_{(g,n)}$. The proof follows from explicit calculations by the Hirzebruch--Riemann--Roch Theorem and Proposition \ref{prop:Laz}. The latter gives a useful criterion for the bigness of a vector bundle; see \cite[ Example 6.1.22]{Laz2} for further details. As for the proof of stability of $T_X(n)$ in the cases above, we apply a series of results which are recalled in Proposition \ref{prop:modspace} (see for instance~\cite{Hu}).

Denote by $X^{[k]}:=Hilb^k(X)$ the Hilbert scheme of zero-dimensional subschemes of length $k$ on a polarized $K3$ surface $(X,H)$, see e.g. \cite{Bos, Dan1, Dan2, Kr, Sca1, Sca2, Sta}. This turns out to be a Hyper--K\"ahler variety. Similarly to the case of polarized K3 surfaces, we prove some bigness and stability results for the rank $2k$ tangent bundle up to a twist. More precisely, set $Y=X^{[k]}$ and consider the tangent bundle $T_Y$ on $Y$. The polarization $H$ on the $K3$ surface~$X$ gives a big and nef line bundle $H_Y$ as in \eqref{eq:Dk}. Then Theorem \ref{thm:tangHilbK3} lists pairs of positive integers $(g,n)$ such that $T_Y \otimes H_Y^{\otimes n}$ is a $\mu_{H_Y}$-stable, where $g$ is the genus of $X$. 

Besides the tangent bundle, we also focus on other families of vector bundles, first on very general polarized $K3$ surfaces and after that on the Hilbert scheme of points on them. Henceforth, assume the genus of $X$ is greater than or equal to $3$. For a primitively polarized $K3$ surface, pick a general curve $C \in |H|$ and a complete linear series $|A|=g^{r-1}_d$ on $C$, with suitable properties of global generation on $A$ and integrality assumptions on any member of~$|H|$; see \S\ref{oddrank} for precise statements. Lazarsfeld defines a rank $r$ vector bundle $E:=E_{C,A}$ on $X$, which encodes several properties of Brill-Noether and Petri theory of the scheme parametrizing special linear series on $C$. The vector bundle $E$ is usually called the {\em Mukai-Lazarsfeld vector bundle} associated with the pair $(C,A)$. In this setting, Theorem \ref{prop:oddvect} proves that for any triple of positive integers $(g,r-1,d)$ such that $d < 2g-2$ and the Brill-Noether number $\rho(g,r-1,d) \geq 0$, there exists a vector bundle $E$ which is globally generated and $\mu_H$-stable on $X$. Moreover, if $\rho(g,r-1,d)=0$ the moduli space $M_H(v)^s$ consists of a single reduced point, which yields an (isomorphism class) of big bundles. If $\rho(g,r-1,d)$ is positive, the general bundle in the moduli space $M_H(v)^s$ is globally generated and big. Here $v$ is the Mukai vector $v=(r,H, g-1-d+r)$.

Another family of examples comes from Ulrich bundles on very general polarized surfaces of genus $g=h+1$, which are dealt with in \S\ref{evenrank}. There, Theorem \ref{thm:ultrich} proves that for any positive integer $a \geq 1$ there exists an $(8a^2+2a^2h+2)$-dimensional family of $\mu_H$-stable Ulrich bundles $E$ on $X$ with Mukai vector $v=(2a, 3aH, 2a(h-1))$. More geometrically, the general point of the moduli space $M_H(v)^s$ corresponds to a $\mu_H$-stable Ulrich bundle of rank~$2a$, which is also globally generated and big.

If $E$ is a rank $r$ vector bundle, the Hilbert scheme $X^{[k]}$ carries a natural rank $rk$ vector bundle $E^{[k]}$, which is known as the {\em tautological bundle} associated with $E$: see \S\ref{bundlesHK}. Then it is natural to consider tautological bundles arising from Mukai-Lazarsfeld bundles and Ulrich bundles, with the same notation and assumptions in Theorem~\ref{prop:oddvect} and Theorem~\ref{thm:ultrich}, respectively. In order to analyze their bigness and stability, and to apply the numerical characterization mentioned before, we need to determine if these tautological bundles are globally generated and if they fulfill the numerical criterion in Proposition \ref{prop:bigness}. 

To this extent, we proceed as follows. First, set $L_n=H^{\otimes n}$ where $H$ is the polarization on $X$. As proved by Voisin~\cite{V}, the tautological bundle $L_1^{[k]}$ is generated by global sections for $g > 2k-2$, where $g$ is the genus of the polarized pair $(X,H)$. As for $L_n^{[k]}$, Theorem 5.2 shows that this is a globally generated rank $k$ vector bundle on $X^{[k]}$. The proof is based on the notion of $(k-1)$-very ampleness of $L_n$. In particular, a vector bundle $E$ is $(k-1)$-very ample on $X$ if and only if the tautological bundle $E^{[k]}$ is globally generated on the Hilbert scheme $X^{[k]}$: see Proposition~\ref{prop:speriamo}(i). Proposition~\ref{prop:speriamo}(ii) proves that if $E$ is globally generated and $L_1^{[k]}$ is globally generated, i.e. $L_1$ is $(k-1)$-very ample, then $(E \otimes L_1)^{[k]}$ is globally generated. We will apply this proposition to a Mukai-Lazarsfeld bundle or an Ulrich bundle~$E$. To this end, we suitably adapt arguments used in \cite{BS} for the stronger notion of $k$-jet ampleness of vector bundles. As for the bigness behaviour, it remains to check $\int_{X^{[k]}}s_{2k}((E \otimes L_1)^{[k]})>0$, where $E$ is either a Mukai-Lazarsfeld bundle or an Ulrich bundle on $X$. 

Therefore, we need a formula for the top Segre class of a tautological bundle $F^{[k]}$ for a rank $r \geq 1$ vector bundle~$F$ on~$X$. The total Segre class $s(F^{[k]})$ is computed in Proposition \ref{prop:segre_taut} via recurrence relations, which are based on a short exact sequence connecting the tautological bundle $F^{[k-1]}$ and the tautological bundle $F^{[k]}$ by pulling them back on the incidence scheme $X^{[k-1,k]}$ parametrizing triples $(\xi, x, \xi') \in X^{[k-1]} \times X \times X^{[k]}$ such that $\xi \subset \xi'$ with residual subscheme supported at the point $x$: see \S\ref{segre_tautHilb} for further details. From such relations, we obtain recurrence formulas expressing any $d$-th Segre class $s_d(F^{[k]})$ by induction on $k$ (cf. Formula \eqref{eq:dth_Segre}). As an example, we give an explicit formula for $s_4(F^{[2]})$ and $s_6(F^{[3]})$ in Corollary~\ref{cor:segre_taut}. This requires the formalism of the Heisenberg algebra (cf. e.g.~\cite{Lehn}), which describes the cohomology algebra structure of the Hilbert scheme of points on a surface. Segre integrals are also computed in \cite{MOP}, for any $k\geqslant 2$, as coefficients of suitable generating series ; see also~\cite{MOP_rank1, V} for the rank $1$ case. Our formulas agree with theirs when $k = 2, 3$.



\subsection*{Notation and terminology}

Throughout, we work over the field $\mathbb{C}$ of complex numbers. By \emph{variety} we mean an integral algebraic scheme $Y$. 
We say that a property holds for a \emph{general} (resp. \emph{very general}) point ${y\in Y}$ if it holds on a Zariski open nonempty subset of
 $Y$ (resp. on the complement of the countable union of proper subvarieties of~$Y$). For any vector bundle $E$ on $Y$, we denote by $S^mE$ the $m^{th}$--symmetric power of $E$ and by $\Sym(E)$ the symmetric algebra. We denote by $T_Y$ the tangent bundle of $Y$.


\section{Preliminaries}\label{Pre} 

\subsection{Chern and Segre classes}\label{ChernSegre}

Let $X$ be a smooth, complex projective variety of dimension $n \geq 2$ and  $E$ be a rank $r$ vector bundle on $X$, $r \geq2$. We set
 $\mathbb{P}(E) \coloneqq \Proj (\Sym(E))$ the projective bundle parametrizing $1$-dimensional quotients of the fibres of $E$, we denote by ${\mathcal O}_{\mathbb{P}(E)} (1)$ the {\em tautological line bundle} on $\mathbb{P}(E)$ and by $\mathbb{P}(E) \stackrel{\pi}{\longrightarrow} X$ the canonical projection (cf. e.g. \cite{H}).
We define the $i^{th}$ \emph{Segre class} of $E$ as:
\begin{equation}\label{eq:IID4}
s_i(E)\coloneqq \pi_*\left(c_1({\mathcal O}_{\mathbb{P}(E)} (1))^{r-1+i}\right)\in H^{2i}(X,\bZ).
\end{equation}
The {\em total Segre class} of $E$ is given by $s(E) \coloneqq 1 + s_1(E) + s_2(E) + \cdots \in H^\ast(X,\bZ)$. The \emph{Chern classes} 
$c_i(E)\in H^{2i}(X,\bZ)$ of $E$ are defined as the coefficients of the inverse formal series of 
$s_E (t) \coloneqq \sum_{i=0}^{+\infty} s_i(E) t^i$, i.e. $c_E(t) = s_E(t)^{-1}$ and $c_E (t) \coloneqq \sum_{i=0}^{+\infty} c_i(E) t^i$. We put $c(E)\coloneqq c_E(1)$. In particular, one has (cf.\;e.g.\;\cite[\S\,3.2]{Fu}):
\begin{equation}\label{eq:segre}
c_1(E) = - s_1(E), \; c_2(E) = s_1(E)^2 - s_2(E), \ldots, c_i(E) = - s_1(E) c_{i-1}(E) - s_2(E) c_{i-2}(E) - \cdots - s_i(E),\; \forall \; i \geqslant 3.
\end{equation}
Denoting by $E^{\vee}$ the dual bundle of $E$, for any line bundle $L$ on $X$ one has (cf. \cite[Rem.\;3.2.3\;(a),\;Ex.\;3.2.2,\;Ex.\;3.1.1]{Fu}): 
\begin{align}
c_i(E^{\vee}) &= (-1)^i c_i(E) \;\; {\rm and} \;\; c_i(E \otimes L) = \sum_{j=0}^i {r-j \choose i - j} c_j(E) c_1(L)^{i-j}, 
\label{eq:dualchern}\\
s_i(E^{\vee}) &= (-1)^i s_i(E) \;\; {\rm and} \;\; s_i(E \otimes L) = \sum_{j=0}^i (-1)^{i-j}{r-1+i \choose r-1 + j} s_j(E) c_1(L)^{i-j}.
\label{eq:dualsegre}
\end{align}


\subsection{Positivity of vector bundles}\label{Positivity}

We remind some definitions concerning certain {\em dimension} and {\em positivity} notions related to vector bundles over $X$; for more details, we refer the reader 
to~\cite[\S\,2.2]{BF} and to references therein. These concepts are first defined for line bundles $L$ on $X$ and then for vector bundles $E$ of rank $r \geq 2$  by considering the associated line bundle $\cO_{\bP(E)} (1)$ on  $\bP(E)$.

\subsubsection{Kodaira--Iitaka dimension, bigness and nefness}\label{KodairaItaka}

Let $L$ be a line bundle on $X$; its {\em Kodaira--Iitaka dimension} $k(L)$ is defined as:
\[
k(L) \coloneqq \left\{\begin{array} {cl} 
- \infty & {\rm if} \; h^0(L^{\otimes m}) = 0, \; \forall \; m \in \mathbb{N} \\
{\rm max}_{m \in \mathbb{N}} \; \dim (\varphi_{L^{\otimes m}}(V)), & {\rm otherwise,} 
\end{array}
\right.
\]
where $X \stackrel{\varphi_{L^{\otimes m}}}\dashrightarrow \mathbb{P} (H^0(L^{\otimes m})^{\vee})$ denotes the rational map given by the linear system $|L^{\otimes m}|$. Then $L$ is said to be {\em big} if $k(L) = n = \dim (X)$, and
$L$ is called {\em nef} if $L \cdot C \geqslant 0$ for any effective curve $C \subset X$.

Let now $E$ be any rank $r$ vector bundle on $X$, with $r \geqslant 2$. Similary as above, its {\em Kodaira--Iitaka dimension} $k(E)$ is defined to be $k(E)\coloneqq 
k({\mathcal O}_{\mathbb{P}(E)} (1))$. The vector bundle $E$ is said to be {\em big}  
if ${\mathcal O}_{\mathbb{P}(E)} (1)$ is big on $\mathbb{P}(E)$. We thus have that
$E$ is big if and only if $k(E) = \dim(\bP(E)) = n + r - 1$.
It is said to be {\em nef} if ${\mathcal O}_{\mathbb{P}(E)} (1)$ is a nef line bundle on~$\mathbb{P}(E)$ (cf.\;e.g. \cite[Definition~1.9]{DPS}).  We recall for later use the following result (cf. \cite[Ex.\;6.1.22]{Laz2}):

\begin{proposition}\label{prop:Laz}
Assume that $H^0(X, S^mE) \neq 0$ for some $m \geq 1$. Then for any ample line bundle $A$ on $X$, the vector bundle~$E \otimes A$ is big. 
\end{proposition}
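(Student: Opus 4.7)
The plan is to translate the statement into a bigness question for a line bundle on the projectivization $\pi\colon Y := \mathbb{P}(E) \to X$ and then apply Kodaira's lemma. Since $\mathbb{P}(E \otimes A)$ and $\mathbb{P}(E)$ coincide as schemes, with tautological bundles related by $\mathcal{O}_{\mathbb{P}(E \otimes A)}(1) \cong L \otimes \pi^* A$, where I set $L := \mathcal{O}_{\mathbb{P}(E)}(1)$, the bigness of $E \otimes A$ is by definition equivalent to the bigness of the line bundle $L \otimes \pi^* A$ on $Y$. The hypothesis $H^0(X, S^m E) \neq 0$ combined with the projection formula $\pi_* L^{\otimes m} \cong S^m E$ gives $H^0(Y, L^{\otimes m}) \neq 0$, so there exists an effective divisor $D$ on $Y$ with $D \sim mL$.

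To supply the ample ingredient, I would invoke the classical fact (see e.g. Hartshorne) that the twist $E \otimes A^{\otimes c}$ is an ample vector bundle for $c$ sufficiently large. In the language of the projective bundle, this means that $L \otimes \pi^* A^{\otimes c}$ is an ample line bundle on $Y$ for some $c \geq 1$, which I now fix. The key computational step is then the elementary identity
\[
(L \otimes \pi^* A)^{\otimes mc} \;\cong\; (L \otimes \pi^* A^{\otimes c})^{\otimes m} \otimes L^{\otimes m(c-1)},
\]
in which the first tensor factor is ample (being the $m$-th power of an ample line bundle) and the second is effective (linearly equivalent to $(c-1)D$). Kodaira's lemma then shows that $(L \otimes \pi^* A)^{\otimes mc}$ is big, hence so is $L \otimes \pi^* A$, which is the desired bigness of $E \otimes A$.

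The only delicate point is the choice of the decomposition in the displayed identity: one must balance the \emph{ample offset} $c$ needed to make $L \otimes \pi^* A^{\otimes c}$ ample against the single effective section of $L^{\otimes m}$ afforded by the hypothesis, and raising everything to the power $mc$ is precisely what reconciles these two constraints in one line. The remaining ingredients — the identification $\mathbb{P}(E \otimes A) \simeq \mathbb{P}(E)$, the eventual ampleness of $E \otimes A^{\otimes c}$, and Kodaira's lemma — are all classical and can be quoted.
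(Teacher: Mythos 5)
Your argument is correct. Note that the paper itself gives no proof of Proposition~2.3: it is quoted verbatim from \cite[Ex.~6.1.22]{Laz2}, so there is nothing internal to compare against; your write-up is essentially the standard argument behind that reference. All the steps check out: $\mathbb{P}(E\otimes A)\cong\mathbb{P}(E)$ with $\mathcal{O}_{\mathbb{P}(E\otimes A)}(1)\cong L\otimes\pi^*A$, the identification $H^0(Y,L^{\otimes m})\cong H^0(X,S^mE)$ via $\pi_*L^{\otimes m}\cong S^mE$ (consistent with the quotient convention $\mathbb{P}(E)=\Proj(\Sym E)$ used in the paper), the eventual ampleness of $E\otimes A^{\otimes c}$, and the identity $(L\otimes\pi^*A)^{\otimes mc}\cong(L\otimes\pi^*A^{\otimes c})^{\otimes m}\otimes L^{\otimes m(c-1)}$, which exhibits a positive multiple of the tautological class as ample plus effective. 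Two small remarks. First, the lemma you invoke at the end is really the elementary direction ``ample $+$ effective $\Rightarrow$ big'' (e.g. \cite[Cor.~2.2.7]{Laz1}), rather than Kodaira's lemma proper, which is the converse decomposition; this is a naming quibble, not a gap. Second, it is worth observing that your strategy is precisely the one the authors carry out by hand later, in the proof of Theorem~\ref{thm:tangHilbK3}: there the twisting class $H_Y$ is only big and nef rather than ample, so instead of quoting ampleness of $E\otimes A^{\otimes c}$ they must first use Kodaira's lemma on $Y$ to write a multiple of $H_Y$ as ample plus effective and then run the same ample-plus-effective decomposition upstairs on $\mathbb{P}(E)$; your version with $A$ ample is the simpler special case, and one could even shortcut the appeal to ampleness of $E\otimes A^{\otimes c}$ by quoting $\pi$-ampleness of $L$ together with \cite[Prop.~1.7.10]{Laz1}, exactly as the paper does there.
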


\subsubsection{Numerical dimension}\label{Numdim}

\begin{definition} (cf.\cite[II.E,\,p.\,24]{GG}) \label{def:numdim} Let $L$ be any \emph{nef} line bundle on $X$. The {\em numerical dimension} of $L$ is defined to be 
the largest integer $n(L)$ such that $c_1(L)^{n(L)} \neq 0$.
\end{definition}

By \cite{Dem}  (cf.\;also\;\cite[(II.E.1),\;p.24]{GG}) one has  
$k(L) \leq n(L)$, and equality holds if $n(L) = \dim(X)$.
Let now $E$ be a globally generated vector bundle, of rank $r \geqslant 2$. From Remark \ref{rem:numerical}, $E$ is nef, i.e. ${\mathcal O}_{{\mathbb P}(E)}(1)$ is a nef line bundle on ${\mathbb P}(E)$. Therefore, it makes sense to consider 
its numerical dimension: 

\begin{definition}(\cite[\S\,II.E,\;p.25]{GG}) \label{def:numdim2} Let $E$ be a globally generated vector bundle of rank $r$ on $X$. The {\em numerical dimension} of $E$ is 
$n(E) \coloneqq n({\mathcal O}_{\mathbb{P}(E)} (1))$.
\end{definition}

Since ${\mathcal O}_{\mathbb{P}(E)} (1)$ is very ample on the fibres of the projection $\mathbb{P}(E) \stackrel{\pi}{\longrightarrow} X$, one has
$r-1 \leqslant n(E) \leqslant \dim(\mathbb{P}(E))$.
On the other hand, since $E$ is nef, by \eqref{eq:IID4} and Definition \ref{def:numdim}, observing that the morphism $\pi_\ast$ consists in integrating over the fibers, we have that  
$n(E)$ is the largest integer with $s_{n(E)-r+1}(E) \neq 0$.
We also have
$k(E) \leqslant n(E)$, where the equality holds when $n(E) = \dim(\bP(E)) = n + r-1$.

\subsubsection{A numerical characterization of bigness}\label{numbig}

The following classical result gives a numerical criterion for the bigness of a nef vector bundle. We briefly recall the proof for the reader's convenience.

\begin{proposition}\label{prop:bigness}
Let $E$ be a nef vector bundle on a $n$-dimensional smooth projective variety~$X$. Then $E$ is big if and only if $(-1)^n \int_X s_n(E) > 0$.
\end{proposition}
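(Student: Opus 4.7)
My plan is to pull the statement back to the projective bundle $\pi \colon \bP(E) \to X$ (of dimension $N \coloneqq n+r-1$) and there to apply the classical numerical criterion for bigness of nef line bundles.

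By the very definition of bigness for vector bundles recalled in~\S\ref{KodairaItaka}, $E$ is big if and only if the tautological line bundle $\cO_{\bP(E)}(1)$ is big on $\bP(E)$; moreover, nefness of $E$ forces nefness of $\cO_{\bP(E)}(1)$. The problem is thus reduced to the analogous question for the nef line bundle $L \coloneqq \cO_{\bP(E)}(1)$ on the $N$-dimensional variety $\bP(E)$.

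The main substantive input I would then invoke is the classical numerical criterion for nef line bundles: if $L$ is nef on a smooth projective variety $Y$ of dimension $N$, then $L$ is big if and only if $\int_Y c_1(L)^N > 0$. This is exactly the equality $k(L)=n(L)$ when $n(L)=\dim(Y)$ quoted right after Definition~\ref{def:numdim} and due to Demailly; it can be proved via the asymptotic Riemann--Roch formula (see e.g.\ \cite[Theorem~2.2.16]{Laz2}). This is the only non-formal step, and is the main obstacle of the proof.

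It then remains to identify the top self-intersection on $\bP(E)$ with the top Segre class on $X$. By the projection formula and the pushforward definition~\eqref{eq:IID4} of $s_n$, together with the duality relation $s_n(E^\vee)=(-1)^n s_n(E)$ from~\eqref{eq:dualsegre}, one obtains
\[
\int_{\bP(E)} c_1(\cO_{\bP(E)}(1))^{n+r-1} \;=\; (-1)^n \int_X s_n(E),
\]
the sign $(-1)^n$ reflecting the identification $\Proj(\Sym E) \simeq \bP_{\mathrm{sub}}(E^\vee)$ together with the sign convention $c(E)s(E)=1$ encoded by \eqref{eq:segre}. Combining this identity with the two reductions above yields the claimed equivalence.
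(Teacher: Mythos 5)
Your proposal is correct and follows essentially the same route as the paper: both reduce to the nef tautological line bundle $\cO_{\bP(E)}(1)$ on $\bP(E)$, apply the numerical bigness criterion for nef line bundles, and identify $\int_{\bP(E)} c_1(\cO_{\bP(E)}(1))^{n+r-1}$ with $(-1)^n\int_X s_n(E)$. The only (cosmetic) difference is that the paper phrases the line-bundle step via numerical dimension and obtains positivity from the Schur-polynomial positivity of \cite[Theorem 2.5]{DPS}, whereas you invoke the criterion of \cite[Theorem 2.2.16]{Laz2} directly; your explicit tracking of the sub/quotient sign convention matches the convention (namely $s(E)c(E)=1$, so $s_n(E^{\vee})=(-1)^n s_n(E)$) actually used in the paper's proof.
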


\begin{proof}
Collecting all the notions recalled above, since $E$ is nef, its  numerical dimension $n(E)$ coincides with the largest integer for which 
$s_{n(E)-r+1} \neq 0$. Then $n(E)-r+1 = n $ is equivalent to $k(E) = n(E) = \dim  (\bP(E))$, meaning that $E$ is big, so the bigness of $E$ is equivalent
to the non-vanishing of its top Segre class.  Applying \cite[Theorem\;2.5]{DPS} with $n = k$, $Y = X$ and $a=(1^n)$, this is equivalent to
the positivity of the Schur polynomial $P_{(1^n)} (c(E)) = s_n (E^{\vee})=(-1)^n s_n(E)$  (cf. \cite[8.3.5]{Laz2}).
\end{proof}

\begin{remark}\label{rem:numerical} If $E$ is globally generated, then $E$ is nef. Indeed, taking ${\mathbb P}(E) \stackrel{\pi}{\rightarrow} X$ the natural projection, global generation of $E$ ensures that $\pi^*E$ is globally generated, hence nef as well.
\end{remark} 

In what follows, we will apply Proposition \ref{prop:bigness} to some globally generated vector bundles.


\section{On some big and stable vector bundles on K3 surfaces}\label{bundlesK3} 

Let $(X,H)$ be a smooth polarized $K3$ surface of genus $g \geqslant 2$, where $H$ is an ample divisor on $X$ such that $H^2 = 2g-2 \geqslant 2$.
From \cite[Def.\,9.1.2, p.169]{Hu}, if $E$ is a rank $r$ vector bundle on $X$, its {\em Mukai vector} is defined as
\begin{equation}\label{eq:Mukaivect}
v(E) \coloneqq (r, c_1(E), \chi(E) - r) = \left(r, c_1(E), \int_X\left( \frac{c_1(E)^2}{2} - c_2(E) \right) + r\right) \in H^0(X, \bZ) \oplus H^2(X, \bZ) \oplus H^4(X,\bZ) \eqqcolon H^*(X, \bZ).
\end{equation}Moreover, for $\alpha = (\alpha_0, \alpha_2, \alpha_4), \beta= (\beta_0, \beta_2, \beta_4) \in H^*(X, \bZ)$, the 
{\em Mukai pairing} is defined as 
\begin{equation}\label{eq:Mukaipair}
\langle \alpha, \beta \rangle \coloneqq (\alpha_2 , \beta_2) - (\alpha_0 , \beta_4) - (\alpha_4 , \beta_0),
\end{equation} 
where $(-, -)$ is the intersection pairing (cf. \cite[Def.\;9.4, p.169]{Hu}).

Any Mukai vector $v:= v(E)$ can be uniquely written as $v = m v_0$, where $m \geqslant 0$ is an integer and $v_0 \in H^*(X, \bZ)$ is indivisible i.e. 
{\em primitive}, equivalently the integer $m$ is maximal (cf.\;\cite[\S\;10.2, p.198]{Hu}). When $m=1$, then $v$ itself is a {\em primitive} Mukai vector. From \cite[\S\,9.3, p.\,175]{Hu}, the {\em $H$-slope} of a rank $r$ vector bundle $E$ on $X$ is defined as 
\[
\mu_H(E) \coloneqq \frac{\int_X c_1(E) \cdot H}{r}
\]
and $E$ is said to be $\mu_H$-{\em stable} (or {\em slope-stable}) if for all subsheaves 
$F \subset E$ with $0 < \rk F <\rk E$ one has $\mu_H(F) < \mu_H(E)$.  Recall that the $\mu_H$-stability is preserved by taking dual bundles and by tensoring with line bundles. 

For a given Mukai vector $v \in H^*(X, \bZ)$, one denotes by $M_H(v)^s$ the {\em moduli space} of $\mu_H$-stable vector bundles 
on $(X,H)$ of given Mukai vector $v$. 
We recall a result of~\cite{Hu} stated for the moduli spaces of Gieseker semistable sheaves; here we consider the open subscheme of slope-stable vector bundles (see also~\cite[Theorem~0.1]{Yo}  or~\cite[Theorem~4.4]{KLS}). 

\begin{proposition}\label{prop:modspace}  Either $M_H(v)^s$ is empty or it is a smooth, quasi-projective scheme of (equi-)dimension $2 + \langle v, v \rangle$ (cf. \cite[\S;10,\; Cor.\,2.1, p.\,196]{Hu}). Moreover: 

\begin{itemize}
\item[(i)] If  $ \langle v, v \rangle = - 2$, then $M_H(v)^s$ is either empty or it consists of one reduced point (cf. \cite[\S\;10,\;Prop.\,3.1, p.\,200]{Hu}).

\item[(ii)] If $v$ is primitive, $H$ is a general polarization, the rank $r \geqslant 1$ and $\langle v, v \rangle \geqslant -2$, then either $M_H(v)^s$ is empty or it is an open dense subset of 
an irreducible symplectic projective manifold $M$ which is deformation equivalent to $Hilb^{2 + \langle v, v \rangle}(X)$. In particular $M_H(v)^s$ is irreducible (cf. \cite[\S\;10,\;Thm.\,3.10, p.\,205]{Hu}). 

\item[(iii)] Let $(X,H)$ be a polarized either 
$K3$ or abelian surface and let $v$ be a Mukai vector of positive rank. If $\langle v,v\rangle > 0$ and if $H$~is general with respect to~$v$,  then $M_H(v)^s$ is either empty or it is an irreducible normal  variety (cf.~\cite[Theorem~0.1]{Yo}) . 
\end{itemize}
\end{proposition}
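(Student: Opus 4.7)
Since the proposition compiles results from the literature (Huybrechts, Yoshioka), the plan is not to give an original proof but to recall the deformation-theoretic machinery for moduli of $\mu_H$-stable sheaves on $K3$ surfaces, following Mukai, that underlies each clause. For the base statement on smoothness and dimension I would identify the Zariski tangent space at $[E]\in M_H(v)^s$ with $\Ext^1(E,E)$ and show that the obstructions lie in the trace-free part $\Ext^2(E,E)_0$. Since $K_X$ is trivial, Serre duality gives $\Ext^2(E,E)\cong \Hom(E,E)^{\vee}=\bC$, so the trace-free quotient vanishes; hence $M_H(v)^s$ is smooth at $[E]$. The dimension is then computed from the Mukai pairing: Hirzebruch--Riemann--Roch on $X$ reads $\chi(E,E)=-\langle v,v\rangle$, and so
\[
\dim_{[E]} M_H(v)^s=\dim \Ext^1(E,E)=2\dim \Hom(E,E)-\chi(E,E)=2+\langle v,v\rangle.
\]

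For item (i), the condition $\langle v,v\rangle=-2$ forces the dimension to be $0$, and smoothness forces every point to be reduced. That $M_H(v)^s$ has at most one point is Mukai's rigidity argument: for two non-isomorphic $\mu_H$-stable bundles $E,F$ with the same Mukai vector, $\mu_H$-stability forces $\Hom(E,F)=0$, Serre duality forces $\Ext^2(E,F)=0$, and $\chi(E,F)=-\langle v,v\rangle=2$ produces a contradiction. For item (ii), the plan is to move $(X,v,H)$ inside a relative moduli space over the universal deformation of the polarized pair $(X,H)$; since $v$ is primitive and $H$ lies off the walls in the ample cone, the relative moduli space is smooth and proper over the base, and a specialisation of $v$ to the rank-one vector $(1,0,1-n)$ with $2n=2+\langle v,v\rangle$ identifies a special fibre with $\mathrm{Hilb}^{n}$ of a deformed $K3$ surface. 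Deformation-invariance of irreducibility and of the hyper-K\"ahler structure then yields the conclusion.

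For item (iii), the ingredients are Yoshioka's construction of $M_H(v)$ as a projective variety via geometric invariant theory, together with the choice of an $H$ generic with respect to $v$, so that no strictly semistable sheaf appears and $M_H(v)^s$ is a Zariski-dense open in $M_H(v)$. Irreducibility and normality then come from the existence of a quasi-universal family together with smoothness of the stable locus, again reducing by deformation to the rank-one case where $M_H(v)\cong \mathrm{Hilb}^n(X)$. The most delicate step, were one to reprove everything from scratch, would be (ii): it rests on twistor deformations, O'Grady's connectedness argument, and Huybrechts' classification of hyper-K\"ahler deformations, so in this note we simply invoke the cited references for the fine structural statements.
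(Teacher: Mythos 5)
The paper offers no proof of this proposition at all: it is stated purely as a recollection of known results, with citations to Huybrechts, Yoshioka and Kaledin--Lehn--Sorger, which is exactly what your proposal ultimately does as well. Your accompanying sketch of the underlying machinery (tangent space $\Ext^1(E,E)$, vanishing of trace-free obstructions via Serre duality and triviality of $K_X$, $\chi(E,E)=-\langle v,v\rangle$ giving dimension $2+\langle v,v\rangle$, Mukai's rigidity argument for $\langle v,v\rangle=-2$, and deformation to the rank-one/Hilbert-scheme case for (ii) and (iii)) is the standard and correct one, so the proposal is consistent with the paper's approach.
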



\subsection{The tangent bundle of a very--general polarized K3}\label{tangent} For any polarized $K3$ surface $(X,H)$
one has (cf. \cite[Example\,9.1.6, p.\,170]{Hu}): 
\begin{equation}\label{eq:tangent}
v(T_X) = (2,0,2-e(X)) = (2, 0, -22) \;\; {\rm and} \;\; \langle v(T_X), v(T_X) \rangle = 88.
\end{equation} 

\vskip 5pt

\noindent
Moreover, the following properties hold:
\begin{align}\label{eq:tangproperties}
(i) & \, h^0(X, T_X) = 0 & \mbox{(cf. \cite[\S\,1.2.4, p.\,13]{Hu})}\\ \nonumber
(ii) & \,h^0(X, S^mT_X) =0, \;\; \forall \;\; m \geq 1 & \mbox{(cf. \cite[Cor.\,9.4.13, p.\,183]{Hu})}\\
(iii) &\,  T_X \, \mbox{is} \, \mu_H-\mbox{stable} & \mbox{(cf. \cite[Prop.\,9.4.5, p.\,180]{Hu})}\nonumber 
\end{align} 

As explained in Introduction, in this paper we are interested in big vector bundles. From \eqref{eq:tangproperties}--(i) and (ii), no $m^{th}$-symmetric power of $T_X$ can be globally generated, for any $m\geqslant 1$. This means that $T_X$ does not satisfy assumptions as in Proposition \ref{prop:Laz} which, therefore, cannot be applied. 

Thus, in what follows, we are concerned in finding sufficient conditions ensuring the existence of a suitable positive integer $n_0$ for which $T_X \otimes H^{\otimes n}$ is big, for any integer $n \geq n_0$. 

To do so, we shall focus on {\em very--general polarized $K3$ surfaces} in the following sense: from \cite[Thm.VIII 7.3 and p.~366]{BPV}, there exists a smooth, irreducible moduli space $\mathfrak{F}_g$ of dimension $19$ which parametrizes 
(isomorphism classes of) smooth, primitively polarized $K3$ surfaces $(X,H)$ of genus $g \geqslant 2$. The pair $(X,H) \in \mathfrak{F}_g$ is called {\em a very general (polarized) K3} when 
(in the sense of the Introduction) $(X,H)$ corresponds to a very general point of~$\mathfrak{F}_g$. Moreover, when $(X,H)$ is very--general, one in particular has 
${\rm Pic}(X) \cong \mathbb{Z}[H]$. In this case, from \eqref{eq:tangproperties}-(iii), \eqref{eq:tangent} and Proposition~\ref{prop:modspace}--(iii) we get: 
\begin{equation}\label{eq:modspacetang}
M_H((2,0,-22))^s \;\; \mbox{is an irreducible, smooth quasi-projective variety of dimension $90$.}
\end{equation} 

To simplify notation, from now on we will moreover identify multiplicative notation of tensor power of line-bundles and additive notation of Cartier divisors, namely $H^{\otimes n}$ will be simply denoted by $nH$. Similarly, we will simply set $T_X(n) \coloneqq T_X \otimes H^{\otimes n}$ for any $n \geq 1$. Taking into account the isomorphism 
$T_X \cong \Omega_X^1$, we can reformulate the results in \cite[\S\,5.2]{Be} as follows:

\begin{proposition}\label{prop:Beauville} Let $(X, H)$ be very--general polarized $K3$ surface of genus $g \geq 2$. Then, one has:
\begin{itemize}
\item[(a)] $h^0(X, T_X(1)) = 0$, for $2 \leq g \leq 9$ or  $g=11$,
\item[(b)] $h^0(X,T_X(1)) = 1$, for $g=10$, 
\item[(c)] $h^1(X,T_X(1)) = 0$, for $g=11$ or $g \geq 13$,
\item[(d)] $h^1(X, T_X(1)) \geq 1$, for $g=12$.
\end{itemize}
\end{proposition}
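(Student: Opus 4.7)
The proof combines Hirzebruch--Riemann--Roch with a vanishing theorem and an analysis on a general hyperplane section via the Gaussian--Wahl map.

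First, I would exploit the isomorphism $T_X \cong \Omega^1_X$ (since $K_X$ is trivial on $X$) and compute the Euler characteristic by Hirzebruch--Riemann--Roch: with $\mathrm{ch}(\Omega^1_X) = 2 - 24\,[\mathrm{pt}]$, $\mathrm{ch}(H) = 1 + H + (g-1)\,[\mathrm{pt}]$ and $\mathrm{td}(X) = 1 + 2\,[\mathrm{pt}]$ one obtains $\chi(X, T_X(1)) = 2g - 22$. Kodaira--Akizuki--Nakano vanishing---or equivalently Serre duality combined with the $\mu_H$-stability recalled in \eqref{eq:tangproperties}(iii), which forces $H^0(X, \Omega^1_X(-H)) = 0$---yields $h^2(X, T_X(1)) = 0$, so that $h^0 - h^1 = 2g - 22$. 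The four statements (a)--(d) are then numerically compatible; in particular at $g = 11$ the vanishings claimed in (a) and (c) are equivalent.

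Next I would restrict to a smooth general $C \in |H|$ of genus $g$. The sequence
\[
0 \to \Omega^1_X \to \Omega^1_X(H) \to \Omega^1_X(H)|_C \to 0,
\]
together with the K3 Hodge numbers $(h^0, h^1, h^2)(\Omega^1_X) = (0, 20, 0)$ and the conormal sequence
\[
0 \to \Oc_C \to \Omega^1_X(H)|_C \to K_C^{\otimes 2} \to 0
\]
(using adjunction $N_{C/X} \cong K_C$), reduces the problem to the rank of the connecting map $\delta \colon H^0(K_C^{\otimes 2}) \to H^1(\Oc_C)$. Its extension class in $H^1(K_C^{\otimes(-2)}) \cong H^0(K_C^{\otimes 3})^\vee$ is, by Wahl's theorem, the linear functional on $H^0(K_C^{\otimes 3})$ annihilating the image of the Gaussian--Wahl map $\Phi_C \colon \bigwedge^{\!2} H^0(K_C) \to H^0(K_C^{\otimes 3})$ coming from the extendability of $C$ to a K3. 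By Lazarsfeld's theorem, a general $C \in |H|$ on a very general polarized K3 is Brill--Noether--Petri general, so the cokernel of $\Phi_C$ attains the dimension predicted by the work of Wahl and Ciliberto--Harris--Miranda for each $g$; plugging this into $h^0(\Omega^1_X(H)|_C) = 3g - 2 - \rk\,\delta$ and chasing the long exact sequence of the restriction yields (a), (b) and the vanishing half of (c) at $g = 11$.

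For $g \geq 13$ in (c) and for (d), the curve-side argument alone does not suffice, and I would invoke Mukai's explicit descriptions of the very general polarized K3 of the relevant genus. In particular, for $g = 12$ the K3 arises as a transverse linear section of a specific ten-dimensional Mukai variety in $\mathbb{P}^{13}$, and the infinitesimal obstruction theory of this presentation contributes $h^1(T_X(1)) \geq 1$ on a specific model; upper semicontinuity then propagates the bound to the very general point of $\mathfrak{F}_{12}$. For $g \geq 13$ no such three-dimensional extension exists, and Petri-type surjectivity forces $h^1(T_X(1)) = 0$. The main obstacle throughout is the delicate interplay at the borderline genera $g = 10, 11, 12$, where precise control of both the Wahl cokernel of a canonical curve on a K3 and of the Mukai extension is needed; it is the combination of Wahl's theorem on non-extendability of a generic canonical curve with Mukai's classification of prime Fano threefolds of small genus that handles these transitional cases.
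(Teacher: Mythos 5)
The paper does not actually prove this proposition: it is stated as a reformulation of Beauville's results in \cite[\S 5.2]{Be}, so the only ``proof'' in the paper is that citation. Your skeleton (Hirzebruch--Riemann--Roch giving $\chi(T_X(1))=2g-22$, vanishing of $h^2$, restriction to a general $C\in|H|$, the conormal sequence and the Gaussian--Wahl map, Mukai's models at special genera) is indeed the shape of Beauville's argument, but as written it has genuine gaps. First, the quantitative heart of the matter is mis-attributed: Brill--Noether--Petri generality of $C$ (Lazarsfeld) gives no control whatsoever on the corank of the Wahl map $\Phi_C$. For $g=10$ and $g\geq 12$ Ciliberto--Harris--Miranda say the Wahl map of an \emph{abstract} general curve is surjective, while by Wahl and Beauville--M\'erindol it is \emph{never} surjective for a curve lying on a K3; so the general $C\in|H|$ is precisely not Wahl-general, and ``the cokernel of $\Phi_C$ attains the dimension predicted by Wahl and CHM'' is not a consequence of anything you have quoted --- computing that corank (equivalently the rank of your connecting map $\delta$) for curves on the very general K3 is exactly the hard content of Beauville's \S 5, which uses Mukai's classification and dimension counts of Fano threefolds. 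Second, even granting $\operatorname{rk}\delta$, your diagram chase is incomplete: in the long exact sequence of $0\to\Omega^1_X\to\Omega^1_X(H)\to\Omega^1_X(H)|_C\to 0$ one has $h^0(\Omega^1_X(H))=h^0(\Omega^1_X(H)|_C)-\operatorname{rk}\gamma$ and $h^1(\Omega^1_X(H))=20-\operatorname{rk}\gamma+g-\operatorname{rk}\delta$, where $\gamma\colon H^0(\Omega^1_X(H)|_C)\to H^1(\Omega^1_X)\cong\mathbb{C}^{20}$ is the other connecting map, which your argument never estimates; so (a)--(c) do not follow from $\operatorname{rk}\delta$ alone.

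Third, the argument for (d) is logically reversed. Cohomology ranks are \emph{upper} semicontinuous, so establishing $h^1(T_X(1))\geq 1$ on a specific model cannot propagate to the very general point of $\mathfrak{F}_{12}$; a lower bound at the very general point must be proved generically (in Beauville's treatment it comes from the fact that the generic genus-$12$ K3 sits anticanonically in Fano threefolds $V_{22}$, whose $6$-dimensional family forces $h^0(\Omega^1_X(1))\geq 3$, hence $h^1\geq 1$ since $\chi=2$). Moreover the geometric input you invoke is off: there is no ten-dimensional Mukai homogeneous variety in $\mathbb{P}^{13}$ of which the genus-$12$ K3 is a linear section; Mukai's linear-section models cover $g\leq 10$, and genus $12$ is governed by the (non-homogeneous, $6$-parameter) family of $V_{22}$'s. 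So while your overall strategy mirrors the source the paper cites, the three steps above are where the actual proof lives, and none of them is supplied; the blind proof consistent with the paper would simply be the reference to \cite[\S 5.2]{Be}.
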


\begin{remark}\label{gencomp} For any integer $n \geq 0$ one has
$$c_1(T_X(n)) = 2 n H \;\; {\rm and} \;\; \int_X c_2(T_X(n))    = n^2 H^2 + \int_X c_2(T_X)    = 2 n^2 (g-1) + 24,$$as it follows from \eqref{eq:dualchern} and from the facts that 
$c_0(T_X) = 1$, $c_1(T_X) =0$, $\int_X c_2(T_X) = e(X) = 24$, $H^2 = 2 (g-1)$. Moreover, $h^2(X, T_X(n)) = 0$; indeed $T_X \cong \Omega_X^1$ so, by Serre duality, 
$h^2(T_X(n)) = h^0(T_X(-n))$, the latter being zero because $T_X(-n)\subseteq T_X$ and $T_X$ is not effective by \eqref{eq:tangproperties}--(i). 

Hirzebruch--Riemann--Roch formula (cf. ~\cite[Corollary~15.2.1]{Fu}) therefore reads 
$$\chi(T_X(n)) = h^0(T_X(n)) - h^1(T_X(n)) = \int_X \frac{c_1(T_X(n))^2}{2} - \int_X c_2(T_X(n))    + 2 \,{\rk}(T_X(n))  $$
$$= \frac{(2nH)^2}{2} - (2n^2(g-1) + 24) + 4 = \frac{4n^2H^2}{2} - 2n^2(g-1) - 20 = 4n^2(g-1) - 2 n^2 (g-1) - 20.$$Thus 
\begin{equation}\label{eq:sectionsTn}
h^0(T_X(n)) = 2 n^2 (g-1) - 20 + h^1(T_X(n)) \geq 2 n^2 (g-1) - 20.
\end{equation}
\end{remark}

Using the previous computations, we prove the following useful Lemma.

\begin{lemma}\label{lem:effectivness}  Let $(X, H)$ be a very--general $K3$ surface of polarization $g \geqslant 2$. Then, for any $g \geqslant 2$, there exists an integer 
$n_0(g)$, depending on $g$, for which the vector bundle $T_X(n)$ is effective, for any $n \geqslant n_0(g)$, where the values of~$n_0(g)$ according to the genus $g$ are the following:
\begin{equation}\label{eq:table}
\begin{array}{|c|c|c|c|c|c|c|c|c|c|c|c|}
\hline
g & 2 & 3 & 4 & 5 & 6 & 7 & 8 & 9  & 10 & 11 & \geq 12\\\hline
n_0(g) & 4 & 3 & 2 & 2 & 2 & 2 & 2 & 2  & 1 & 2 & 1\\\hline 
\end{array}
\end{equation}
\end{lemma}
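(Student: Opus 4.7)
The plan is to combine the Hirzebruch--Riemann--Roch lower bound derived in Remark~\ref{gencomp},
$$h^0(T_X(n)) \;\geq\; 2n^2(g-1) - 20,$$
with a straightforward monotonicity argument and, in one exceptional genus, the explicit computation of Proposition~\ref{prop:Beauville}.

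First I would observe monotonicity in $n$: once $T_X(n_0)$ is effective, so is $T_X(n)$ for every $n\geq n_0$. Indeed, since $H$ is ample on the $K3$ surface $X$, every positive multiple $(n-n_0)H$ is effective (for instance because Riemann--Roch together with the vanishing of $h^2$ forces $h^0(mH)\geq m^2(g-1)+2>0$ for $m\geq 1$), so if $0\neq\sigma\in H^0(T_X(n_0))$ and $0\neq\tau\in H^0((n-n_0)H)$ then the tensor product $\sigma\otimes\tau$ is a nonzero global section of $T_X(n_0)\otimes (n-n_0)H=T_X(n)$. Hence it suffices to produce a nonzero section of $T_X(n_0(g))$ for each value of $n_0(g)$ listed in the table.

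Next I would run the HRR estimate by solving the elementary inequality $n^2(g-1)>10$. For every genus except $g=10$, the smallest positive integer $n$ satisfying it coincides with the claimed $n_0(g)$: for $g=2$ one needs $n=4$ (since $9<10<16$); for $g=3$ one needs $n=3$ (since $8<10<18$); for $g\in\{4,\dots,9\}$ one needs $n=2$ (the minimum contribution being $4\cdot 3=12>10$); for $g=11$ one needs $n=2$ (since $1\cdot 10\not>10$ but $4\cdot 10>10$); and for $g\geq 12$ already $n=1$ works, as $g-1\geq 11>10$. In each of these cases Remark~\ref{gencomp} forces $h^0(T_X(n_0(g)))>0$ directly, and the monotonicity step finishes the argument.

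Finally, the one case where the HRR estimate fails is $g=10$, for which $\chi(T_X(1))=2\cdot 9-20=-2$ and the bound is vacuous. Here I would simply invoke Proposition~\ref{prop:Beauville}(b), which asserts $h^0(T_X(1))=1$, and conclude that $n_0(10)=1$. The only mildly delicate point of the proof is recognizing that $g=10$ is the unique genus for which the Euler characteristic of $T_X(n_0(g))$ is not already positive, so that an input beyond Riemann--Roch is required; everything else reduces to a finite tabulation of a quadratic inequality.
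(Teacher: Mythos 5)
Your proposal is correct and follows essentially the same route as the paper: the Hirzebruch--Riemann--Roch bound $h^0(T_X(n))\geq 2n^2(g-1)-20$ from Remark~\ref{gencomp} handles every genus except $g=10$ at $n=1$, where Proposition~\ref{prop:Beauville}(b) supplies the missing section. Your explicit monotonicity step (multiplying a section of $T_X(n_0)$ by a section of $(n-n_0)H$) is a clean way to cover all $n\geq n_0(g)$, and it lets you bypass the paper's (inessential) appeals to Proposition~\ref{prop:Beauville}(c)--(d) for $g\geq 12$.
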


\begin{proof} Notice that, for $g=10$ and $g \geqslant 12$, Proposition \ref{prop:Beauville} gives always $h^0(T_X(1)) \geqslant 1$, i.e. $n_0(g)=1$ in all these cases. 
Indeed, for $g=10$, Proposition \ref{prop:Beauville}--(b) directly gives $h^0(T_X(1)) =1$;  for $g=12$, \eqref{eq:sectionsTn} applied for $n=1$ gives 
$h^0(T_X(1)) = 2 + h^1(T_X(1)) \geq 3$, the latter inequality following from 
Proposition \ref{prop:Beauville}--(d); at last, for $g\geq 13$, Proposition~\ref{prop:Beauville}--(c) gives $h^1(T_X(1)) =1$ so, by \eqref{eq:sectionsTn}, 
we have $h^0(T_X(1)) = 2 (g-1) - 20 \geq 2 (12) - 20 = 4$.

For $g=11$, differently than above, Proposition \ref{prop:Beauville}--(a) and (c) give $h^0(T_X(1)) = 
h^1(T_X(1)) =0$. On the other hand, formula \eqref{eq:sectionsTn} for $n=2$ gives 
$h^0(T_X(2)) = 60 + h^1(T_X(2))  \geq 60$ so $n_0(11) =2$. 

We are left  with the low--genus cases, i.e. $2 \leq g \leq 9$. From \eqref{eq:sectionsTn}, for any integer $k \geqslant 0$ we get 
$h^0(T_X(k)) \geq 2 k^2 (g-1) -20$, the left--side member being positive as soon as $k^2 > \frac{20}{2(g-1)} = \frac{10}{g-1}$. 
We therefore set $n_0(g) \coloneqq \left\lceil \sqrt{\frac{10}{g-1}} \;\;\right\rceil$, which gives values as in \eqref{eq:table}. 
\end{proof}

\begin{theorem}\label{thm:tangK3} Let $(X, H)$ be a very--general $K3$ surface of polarization $g \geqslant 2$. Then $T_X(n)$ is a big vector bundle if:
\begin{enumerate}
\item $n \geq 5$, for $g =2$ ;
\item $n \geq 4$, for $g=3$ ;
\item $n \geq 3$, for $4 \leq g \leq 9$ or $g=11$ ;
\item $n \geq 2$, for $ g \geq 10$ but $g \neq 11$.
\end{enumerate} For any pair $(g,n)$ as above, let $v_{(g,n)}$ be the Mukai vector $v(T_X(n))$.  Then $M_H(v_{(g,n)})^s \neq \emptyset$. Moreover, 
$M_H(v_{(g,n)})^s$ is a smooth, quasi-projective variety of dimension $90$ 
whose general element parametrizes  rank $2$ big vector bundles on $X$ of Mukai vector $v_{(g,n)}$. 
\end{theorem}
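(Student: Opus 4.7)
The plan is threefold: (i) deduce bigness of $T_X(n)$ from Lemma~\ref{lem:effectivness} combined with Proposition~\ref{prop:Laz}; (ii) compute the Mukai vector $v_{(g,n)}$ and describe $M_H(v_{(g,n)})^s$ using the $\mu_H$-stability of $T_X$ together with Proposition~\ref{prop:modspace}; (iii) pass from bigness at the specific point $[T_X(n)]$ to bigness at a general point via openness of bigness in families.

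For (i), I would first observe that each of the bounds in (1)--(4) is precisely $n\geq n_0(g)+1$, with $n_0(g)$ read off the table in Lemma~\ref{lem:effectivness}. For such $n$ the lemma gives $h^0(X,T_X(n-1))\geq 1$, so Proposition~\ref{prop:Laz} applied with $m=1$, $E=T_X(n-1)$ and ample line bundle $A=H$ yields that $T_X(n)=T_X(n-1)\otimes H$ is big. This disposes of all four cases at once.

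For (ii), by Remark~\ref{gencomp} one has $c_1(T_X(n))=2nH$ and $\chi(T_X(n))=2n^2(g-1)-20$, so
\[
v_{(g,n)} = \bigl(2,\,2nH,\,2n^2(g-1)-22\bigr),
\]
and a short computation from \eqref{eq:Mukaipair} gives $\langle v_{(g,n)},v_{(g,n)}\rangle = 8n^2(g-1)-2\cdot 2\cdot(2n^2(g-1)-22)=88$, independent of $(g,n)$. Since $T_X$ is $\mu_H$-stable by \eqref{eq:tangproperties}(iii) and slope-stability is preserved by tensoring with line bundles, $T_X(n)$ defines a point of $M_H(v_{(g,n)})^s$, which is therefore non-empty. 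Very-generality of $(X,H)$ gives $\Pic(X)=\bZ[H]$, so $H$ is trivially general with respect to $v_{(g,n)}$; Proposition~\ref{prop:modspace} (and in particular its part~(iii), applicable because $\rk=2>0$ and $\langle v,v\rangle=88>0$) then yields that $M_H(v_{(g,n)})^s$ is smooth, irreducible, normal and quasi-projective of dimension $2+88=90$.

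Step (iii) is where the real care is needed: bigness of a rank $r$ vector bundle $\cE$ amounts to bigness of the line bundle $\cO_{\bP(\cE)}(1)$ on $\bP(\cE)$, and one has to invoke the openness of bigness of line bundles in flat families of smooth projective varieties. Applied to a quasi-universal family defined étale-locally on $M_H(v_{(g,n)})^s$, this shows that the big locus is a non-empty open subset of the moduli; by the irreducibility established in step (ii) it is then dense, so the general element is big. The main obstacle here is cosmetic: $M_H(v_{(g,n)})^s$ need not carry a global universal family, but the étale-local reduction to a quasi-universal family is standard and suffices.
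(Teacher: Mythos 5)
Your proposal matches the paper's proof in all essentials: bigness from Lemma~\ref{lem:effectivness} plus Proposition~\ref{prop:Laz} (the paper twists $T_X(n_0(g))$ by $kH$, you twist $T_X(n-1)$ by $H$ — the same argument), the Mukai vector computation giving $\langle v_{(g,n)},v_{(g,n)}\rangle=88$ and dimension $90$, non-emptiness via $\mu_H$-stability of $T_X$ preserved under twisting, and bigness of the general element via openness of bigness on the irreducible moduli space. Your extra remark on working étale-locally with a quasi-universal family only makes explicit what the paper leaves implicit, so this is essentially the same proof.
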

\begin{proof}  We first focus on bigness. From Lemma \ref{lem:effectivness}, we know that $T_X(n_0(g))$ is effective, where $n_0(g)$ is a positive integer as in \eqref{eq:table}.  
We can therefore apply Proposition \ref{prop:Laz}, with $E \coloneqq T_X(n_0(g))$, $m=1$ and $A = k H$, $k \geq 1$ any integer, to get that $T_X(n)$ is big for any integer 
$n:= k+1 \geq n_0(g) +1$.

Concerning stability, from \eqref{eq:tangproperties}-(iii), \eqref{eq:modspacetang} and the fact that $\mu_H$-stability is preserved under tensor product with line bundles, we deduce that, for all pairs $(g,n)$ as in the statement, $M_H(v_{g,n})^s$ is not empty and it
is a smooth quasi-projective variety of dimension~$90$. The assertion on $\dim M_H(v_{(g,n)})^s = 90$ follows by a direct computation: indeed from Proposition \ref{prop:modspace} one has $\dim M_H(v_{(g,n)})^s = 2 + \langle  v_{g,n}, v_{g,n} \rangle$, where 
$$v_{g,n} = v(T_X(n)) = \left(2 , c_1(T_X(n)), \int_X \left(\frac{c_1(T_X(n))^2}{2} - c_2(T_X(n)) \right)    + 2\right) = \left(2, \; 2nH, \; 2 n^2(g-1) -22\right),$$the last equality following from Remark~\ref{gencomp} above. It is straightforward to compute that $\langle  v_{g,n}, v_{g,n} \rangle =88$ so, by Proposition \ref{prop:modspace}, we get $\dim(M_H(v_{(g,n)})^s) = 90$.\footnote{The latter equality more intrisically follows from the fact that the operation of tensor product $- \otimes H^{\otimes n}$ establishes, for any integer $n \geqslant 1$, an isomorphism between the moduli space $M_H^s ((2, 0, - 22))$  and  the moduli space $M_H(v_{g,n})^s$, the isomorphism sending $[T_X]$ to $[T_X(n)]$. Thus, from~\eqref{eq:modspacetang} $M_H(v_{(g,n)})^s$  has the same dimension as $M_H((2, 0, - 22))^s$ which is of dimension $90$.}

At last, since bigness is an open condition and since $T_X(n)$ is big for any $n \geqslant n_0(g)+1$, the general stable bundle 
parametrized by $M_H(v_{(g,n)})^s$ is therefore big for any pair $(g,n)$ as above.
\end{proof}


\subsection{Big and stable Mukai-Lazarsfeld vector bundles on $K3$ surfaces}\label{oddrank} Smooth curves on polarized $K3$ surfaces,  in particular their
Brill-Noether theory, play a fundamental role in Algebraic Geometry. Indeed the Brill-Noether theory of these curves is deeply 
connected to the geometry of the surface, to modular properties of curves on $K3$ surfaces, as well as it
is fundamental to prove results on smooth curves with  general moduli with no use of degeneration techniques 
(cf.~\cite{Laz0}). 

Lazarsfeld's approach to Brill--Noether theory without degenerations in \cite{Laz0} uses vector-bundle techniques on $X$; given  
$(X,H)$ a primitively polarized $K3$ surface of genus $g \geq 3$, a general curve $C \in |H|$ and a complete linear series $|A| =
g^{r-1}_d$ on $C$, with suitable properties of global generations on $A$ and of integrality assumptions on any member of $|H|$, 
Lazarsfeld associates a rank $r$ vector bundle $E := E_{C,A}$ on $X$ to the triple $(X,C,A)$, the vector bundle $E$ depending on the choice of 
$C \in |H|$ and of the line bundle $A$ on $C$. This vector bundle $E$ encodes several properties of Brill-Noether and Petri's theory 
of the scheme $W^{r-1}_d(C)$, parametrizing {\em special linear series} on $C$. Here we will briefly recall Lazarsfeld's approach in \cite{Laz0} as it will allow us to also construct families of stable and big vector bundles of any rank $r \geqslant 2$ on a very--general polarized $K3$ surface.

Let $(X,H)$ be a smooth, polarized, projective $K3$ surface of genus $g \geqslant 3$. Given a curve $C$ and positive 
integers $d$ and $r$, consider the {\em Brill--Noether locus} 
$$W^{r-1}_d (C):= \{ A \in \; {\rm Pic}^d(C) \; | \; h^0(C, A) \geqslant r \} \subseteq {\rm Pic}^d(C)$$and its 
subscheme $$V^{r-1}_d(C) \subseteq  W^{r-1}_d(C)$$defined to be the non-empty, open  subset of $W^{r-1}_d(C)$ consisting of line bundles $A$ on $C$ such that:
\begin{itemize}
\item[(i)] $h^0(C,A) = r$, $\deg(A) = d$, and 
\item[(ii)] both $A$ and $\omega_C \otimes A^{\vee}$ are globally generated on $C$ (where $\omega_C$ denotes the canonical bundle of $C$).
\end{itemize}In this set--up, for any smooth curve $C \in |H|$ and any line bundle $A \in V^{r-1}_d(C)$ 
one associates to the pair $(C, A)$ a rank $r$ vector bundle $E := E_{C,A}$ on $X$ as follows:
since $A$ is globally generated, we have a canonical surjective map
$$ev_{C,A} : H^0(C,A) \otimes {\mathcal O}_X \twoheadrightarrow A$$of ${\mathcal O}_X$-modules (thinking $A$ as a sheaf on $X$);
thus, $\ker(ev_{C,A})$ is a rank~$r$ vector bundle on $X$, therefore, one sets$$E = E_{C,A} := \ker(ev_{C,A})^{\vee}$$(for details,
cf. \cite[\S1]{Laz0}). This gives rise to the exact sequence on $X$:
\begin{equation}\label{eq:L11}
0 \to E^{\vee} \to H^0(C,A) \otimes {\mathcal O}_X \to A \to 0.
\end{equation}Dualizing \eqref{eq:L11}, one gets
\begin{equation}\label{eq:L12}
0 \to H^0(C,A)^{\vee} \otimes {\mathcal O}_X \to E \to \omega_C \otimes A^{\vee} \to 0,
\end{equation}since ${\mathcal Ext}^1_{{\mathcal O}_X} (A, {\mathcal O}_X) \cong \omega_C \otimes A^{\vee}$ (cf. \cite[Lemma\;7.4, p.\;242]{H}). The vector bundle $E$ is called the 
{\em Mukai-Lazarsfeld vector bundle} associated to the pair $(C, A)$. If, as it is customary, one considers the {\em Brill-Noether number}:
\begin{equation}\label{eq:rhoA}
g - h^0(C,A) h^1(C,A) = g - r ( r -1 + g - d) = : \rho(g,r-1,d) 
\end{equation}from \eqref{eq:L11}, \eqref{eq:L12} and the fact that
$X$ is regular with $\omega_X \cong {\mathcal O}_X$, one has (cf.\;\cite[\S\,1]{Laz0}):
\begin{enumerate}
\item[(E1)] $E$ is a rank~$r$, globally generated vector bundle on $X$,
\item[(E2)] $c_1(E) = H, \; \int_X c_2(E)    = \deg(A) = d,$
\item[(E3)] $h^0(X, E^{\vee}) = h^2(X, E) = 0, \;  h^1(X, E^{\vee}) = h^1(X, E) = 0$,
\item[(E4)] $h^0(X, E) = h^0(C,A) + h^1(C,A) = 2r + g - d - 1$;
\item[(E5)] $\chi(X, E \otimes E^{\vee}) = 2 - 2 \rho(g,r-1,d) $ (cf. \cite[$(iv)$, p. 302]{Laz0}). If moreover $E$ is {\em simple}, more precisely one has 
$h^0(X, E \otimes E^{\vee}) = h^2(X, E \otimes E^{\vee}) = 1$ and $h^1(X, E \otimes E^{\vee}) = 2 \rho(g,r-1,d)$.
\end{enumerate}

Another fundamental property of the vector bundle $E$ is given by the following:

\begin{lemma}\label{lem:L13} (cf. \cite[Lemma 1.3]{Laz0}) If $E$ has non-trivial endomorphisms,
i.e. if $h^0(X, E \otimes E^{\vee}) \geq 2$, the linear system~$|H|$ contains a reducible (or multiple) curve.
\end{lemma}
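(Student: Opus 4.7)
The strategy is to use a non-scalar endomorphism of $E$ to decompose the divisor class $H = c_1(E)$ as a sum of two non-zero effective divisor classes, so that $|H|$ must contain a reducible or multiple curve.

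First, $h^0(X, E \otimes E^{\vee}) \geq 2$ provides an endomorphism $\phi \colon E \to E$ that is not a scalar multiple of the identity. I would pick a complex root $\lambda$ of the characteristic polynomial $\det(t \cdot \id_E - \phi)$—whose coefficients lie in $H^0(X, \cO_X) = \bC$ and which therefore factors over $\bC$—and set $\psi := \phi - \lambda \cdot \id_E$. Then $\psi$ is non-zero (because $\phi$ is non-scalar) but $\det \psi = 0$, so it fits into the exact sequence
\[
0 \to I \to E \to C \to 0,
\]
with $I := \mathrm{Im}(\psi)$ and $C := \coker(\psi)$ both of ranks strictly between $0$ and $r$, and taking $c_1$ yields $H = c_1(I) + c_1(C)$ in $\Pic(X)$. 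Both $I$ (as the quotient $E/\ker \psi$) and $C$ are quotients of the globally generated bundle $E$ by (E1), hence are themselves globally generated. A standard argument—take general sections giving a generically injective map from a trivial bundle, then pass to top exterior powers—shows that the first Chern class of any globally generated torsion-free sheaf of positive rank on a smooth projective variety is effective. Applying this to $I$ and $C$ produces effective divisors $D_I, D_C$ with $c_1(I) \sim D_I$, $c_1(C) \sim D_C$, and $D_I + D_C \sim H$.

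The main obstacle will be to ensure that neither $D_I$ nor $D_C$ is the zero divisor, for otherwise the decomposition is useless. Here the vanishing $h^0(X, E^{\vee}) = 0$ from (E3) is decisive. If $D_I = 0$, then $\det I$ is trivial, and the map $\cO_X^{\oplus s} \to I$ (with $s := \rk I \geq 1$) built in the previous step has determinant a nowhere-vanishing constant, forcing $I \cong \cO_X^{\oplus s}$. A small technical point is that if $I$ fails to be locally free one passes to $I^{\vee\vee}$, which agrees with $I$ in codimension one and so preserves $c_1$ on the smooth surface $X$. Dualizing the surjection $E \twoheadrightarrow \cO_X^{\oplus s}$ produces an injection $\cO_X^{\oplus s} \hookrightarrow E^{\vee}$ and hence $s \geq 1$ independent sections of $E^{\vee}$, contradicting (E3). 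The identical argument rules out $D_C = 0$. Once $D_I, D_C$ are both non-zero and effective, $D_I + D_C$ is a member of $|H|$ that is reducible when $D_I$ and $D_C$ share no components and multiple otherwise, which is the required conclusion.
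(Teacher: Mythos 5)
Your argument is correct and is in substance the very proof the paper points to: the paper gives no proof of this lemma, quoting it directly from Lazarsfeld (Lemma~1.3 of \cite{Laz0}), and your steps -- subtracting a constant eigenvalue $\lambda$ (using $H^0(X,\cO_X)=\bC$) to get a nonzero, everywhere-degenerate $\psi$, splitting $H=c_1(\mathrm{Im}\,\psi)+c_1(\coker\psi)$ into effective classes via global generation (E1), and using $h^0(X,E^{\vee})=0$ from (E3) to rule out a trivial summand -- are exactly Lazarsfeld's. Two small points should be tidied in a final write-up: $\coker\psi$ need not be torsion-free, so apply your effectivity statement to its torsion-free quotient and add the class $c_1$ of the torsion subsheaf, which is itself effective on a smooth surface; and when $c_1(I)=0$ your construction only gives $I^{\vee\vee}\cong\cO_X^{\oplus s}$, not $I\cong\cO_X^{\oplus s}$, so instead of dualizing a surjection $E\twoheadrightarrow\cO_X^{\oplus s}$ it is cleaner to note that the nonzero composite $E\twoheadrightarrow I\hookrightarrow I^{\vee\vee}\cong\cO_X^{\oplus s}$ already produces a nonzero section of $E^{\vee}$, contradicting (E3); the same remark handles the cokernel case.
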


Thus, in particular, we have the following:

\begin{proposition}\label{prop:Lstab} 
Let $(X,H)$ be a primitively polarized $K3$ surface, such that $H^2 >2$ and that $|H|$ contains neither reducible nor non-reduced curves.  
Let $C \in |H|$ be any smooth curve and let $A \in V^{r-1}_d(C)$. Then:
\begin{itemize}
\item[(i)] $E$ is a simple bundle on $X$.
\item[(ii)] If moreover $(X,H)$ is assumed to be very--general, then $E$ is a $\mu_H$-stable bundle on $X$. Thus, setting 
$v = v(E) := (r, H, g-1 - d + r) $ as the Mukai vector of $E$, then  the moduli space $M^s_H(v)$ is smooth, irreducible and 
such that $\dim ( M^s_H(v)) = 2 \rho(g,r-1,d)$.
\end{itemize}
\end{proposition}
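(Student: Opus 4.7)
The plan is to handle (i) and (ii) separately, reading (i) off Lemma~\ref{lem:L13} and proving (ii) by a slope comparison tailored to $\Pic(X) = \bZ[H]$, followed by a delicate borderline argument that will exploit the simplicity proved in (i).

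For (i), I would observe that the identity morphism yields $h^0(X, E \otimes E^\vee) \geq 1$; if equality failed, Lemma~\ref{lem:L13} would produce a reducible or non-reduced member of $|H|$, contradicting the hypothesis. Hence $h^0(X, E \otimes E^\vee) = 1$, which is the definition of $E$ being simple.

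For (ii), assuming $(X,H)$ very general so $\Pic(X) = \bZ[H]$, I would take a saturated destabilizing subsheaf $F \subsetneq E$ with $0 < s := \rk F < r$ and $\mu_H(F) \geq \mu_H(E) = H^2/r$, write $c_1(F) = a H$ with $a \geq 1$ (forced by the slope inequality $ar \geq s \geq 1$), and examine the quotient $Q := E/F$, which is torsion-free of rank $r-s \geq 1$ with $c_1(Q) = (1-a)H$ and is globally generated as a quotient of $E$ by (E1). Taking $\wedge^{r-s}$ of a surjection $\mathcal{O}_X^{\oplus h^0(E)} \twoheadrightarrow Q$ and extending generators from the codimension~$2$ locally free locus by Hartogs, $\det(Q^{\vee\vee}) = (1-a) H$ is effective, giving $a \leq 1$ and hence $a = 1$. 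In this borderline case $c_1(F) = H = c_1(E)$ with $s < r$, I would consider the composition $F \hookrightarrow E \twoheadrightarrow \omega_C \otimes A^\vee$ coming from \eqref{eq:L12}: its image cannot vanish (otherwise $F \subset H^0(C,A)^\vee \otimes \mathcal{O}_X$ would give $c_1(F) \leq 0$), so it is $\omega_C \otimes A^\vee(-Z)$ for some effective divisor $Z$ on $C$, while its kernel is a saturated subsheaf of $H^0(C,A)^\vee \otimes \mathcal{O}_X$ of rank $s$ and trivial determinant. Along the lines of the proof of Lemma~\ref{lem:L13} and exploiting the global generation of both $A$ and $\omega_C \otimes A^\vee$, such data manufactures a non-scalar endomorphism of $E$, contradicting the simplicity established in (i). Consequently no destabilizing $F$ exists and $E$ is $\mu_H$-stable.

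For the moduli space, Proposition~\ref{prop:modspace} immediately yields that $M_H^s(v)$ is smooth quasi-projective of dimension $2 + \langle v, v \rangle$. Using \eqref{eq:Mukaipair} and \eqref{eq:rhoA} the Mukai square computes to
\[
\langle v, v \rangle = H^2 - 2 r (g - 1 - d + r) = (2g-2) - 2(g - \rho(g, r-1, d)) = 2 \rho(g, r-1, d) - 2,
\]
so $\dim M_H^s(v) = 2\rho(g, r-1, d)$. Since $c_1(E) = H$ is primitive in $\Pic(X) = \bZ[H]$, the Mukai vector $v = (r, H, g-1-d+r)$ is primitive in $H^*(X, \bZ)$, and the very-general hypothesis makes $H$ generic with respect to $v$, so Proposition~\ref{prop:modspace}(ii) identifies $M_H^s(v)$ with an open dense subset of an irreducible symplectic manifold, whence irreducibility. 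The main obstacle is precisely the $a = 1$ step above: the slope inequality is no longer strict and purely numerical constraints no longer suffice, so one must combine the Lazarsfeld-bundle structure of $E$ with the simplicity already proven to close the argument.
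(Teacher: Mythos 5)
Your part (i) is exactly the paper's argument (the contrapositive of Lemma~\ref{lem:L13}), and your treatment of the moduli space -- the computation $\langle v,v\rangle=2\rho(g,r-1,d)-2$ and the appeal to Proposition~\ref{prop:modspace} with $v$ primitive and $H$ generic because $\Pic(X)=\bZ[H]$ -- matches the paper as well. For $\mu_H$-stability, however, the paper does not argue directly: it simply cites \cite[Prop.~4.5]{FF}. You instead attempt a self-contained proof, and there the argument has a genuine gap precisely at the step you yourself single out, the case $a=1$. After producing the filtration $0\to K\to F\to(\omega_C\otimes A^{\vee})(-Z)\to 0$ with $K\subset H^0(C,A)^{\vee}\otimes\mathcal{O}_X$ of trivial determinant, you assert that ``along the lines of the proof of Lemma~\ref{lem:L13}'' this data manufactures a non-scalar endomorphism of $E$. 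But Lemma~\ref{lem:L13} runs in the opposite direction: Lazarsfeld's proof produces a decomposable or non-reduced member of $|H|$ \emph{from} a non-trivial endomorphism, and gives no recipe for converting a subsheaf with $c_1(F)=H$ into an endomorphism. As written, the decisive case is asserted rather than proved, so the stability claim is not yet established.

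The gap is fillable, and in fact more cheaply than by invoking simplicity: work on the quotient side rather than with the kernel $K$. When $a=1$, the saturated quotient $Q=E/F$ is torsion-free of rank $r-s\geq 1$, globally generated (as a quotient of $E$, by (E1)), and has $c_1(Q)=0$. Choosing $r-s$ general global sections gives an injection $\mathcal{O}_X^{\oplus(r-s)}\hookrightarrow Q$ which is generically surjective, so its cokernel is a torsion sheaf with trivial first Chern class, hence supported in dimension zero; dualizing yields $\Hom(Q,\mathcal{O}_X)\neq 0$. Composing a nonzero map $Q\to\mathcal{O}_X$ with $E\twoheadrightarrow Q$ gives $h^0(X,E^{\vee})\neq 0$, contradicting (E3) outright (equivalently, composing further with an inclusion $\mathcal{O}_X\hookrightarrow H^0(C,A)^{\vee}\otimes\mathcal{O}_X\subset E$ from \eqref{eq:L12} produces the non-scalar endomorphism you wanted, contradicting (i)). With this insertion your route becomes a correct, self-contained alternative to the paper's citation of \cite[Prop.~4.5]{FF}; note also that it shows simplicity is not actually needed for this step, since the vanishing $h^0(E^{\vee})=0$ already suffices.
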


\begin{proof}\text{} 
	
\noindent (i) The fact that $E$ is simple directly follows from the assumption on $|H|$, containing neither reducible nor non-reduced curves, 
and from Lemma \ref{lem:L13}.

\noindent
(ii) If $(X,H)$ is assumed to be very--general, then in particular ${\rm Pic}(X) \cong \mathbb{Z}[H]$. Stability of 
$E$ is then proved e.g. in \cite[Prop.\;4.5]{FF}.  Moreover since $ v = v(E) = (r, H, g-1 - d + r)$ is primitive and  since 
$H$ is the generator of ${\rm Pic}(X)$, then Proposition\;\ref{prop:modspace}--(ii) implies irreducibility of $M^s_H(v)$, whose dimension is given 
by $2 + \langle v , v \rangle = 2 \rho(g,r-1,d).$
\end{proof}

In this general set--up, we have the following result.

\begin{theorem}\label{prop:oddvect} Let $(X,H)$ be a very--general polarized $K3$ surface of genus $g \geqslant 3$. For any triple $(g, r-1, d)$ of positive integers such that 
$ d < 2g-2$ and $\rho(g,r-1,d) \geqslant 0$, there exists a vector bundle $E$ on $X$ of rank $r$, with $c_1(E) = H$ and $c_2(E) = d$ which is globally generated and $\mu_H$-stable on $X$. Setting 
$v = v(E) = (r, H, g-1-d +r)$ as the Mukai vector of $E$, then $M_H(v)^s$ is smooth, irreducible of dimension $\dim (M_H(v)^s) = 2\rho(g,r-1,d)$.
When in particular $\rho(g,r-1,d) = 0$, then $M_H(v)^s$ consists of the single, reduced point $\{E\}$, which is also a big bundle on $X$. 
When otherwise $\rho(g,r-1,d) > 0$, the general bundle parametrized by $M_H(v)^s$ is globally generated and big on $X$. 
\end{theorem}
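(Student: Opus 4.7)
The plan is to realize $E$ as a Mukai--Lazarsfeld bundle $E_{C,A}$ for a carefully chosen pair $(C,A)$, then pass from stability to bigness via the numerical Segre-class criterion. First, I exploit the very general hypothesis: $\Pic(X) \cong \bZ[H]$, and primitivity of $H$ forces every member of $|H|$ to be irreducible and reduced; in particular $H^2 = 2g-2 > 2$ because $g \geqslant 3$. A general $C \in |H|$ is smooth of genus $g$, and by Lazarsfeld's theorem on Brill--Noether theory of curves on $K3$ surfaces, such $C$ is Brill--Noether--Petri general. Therefore the hypothesis $\rho(g,r-1,d) \geqslant 0$ guarantees that $W^{r-1}_d(C)$ is nonempty of dimension $\rho$, with the generic $A \in W^{r-1}_d(C)$ satisfying $h^0(C,A) = r$. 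Global generation of $A$ and of $\omega_C \otimes A^\vee$ are open conditions, and the inequality $d < 2g-2$ ensures $\deg(\omega_C \otimes A^\vee) > 0$, so for generic $A$ one has $A \in V^{r-1}_d(C)$. I then set $E := E_{C,A}$.

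Properties (E1)--(E4) then give for free that $E$ is a rank-$r$ globally generated vector bundle with $c_1(E) = H$, $c_2(E) = d$ and Mukai vector $v = (r, H, g-1-d+r)$. The hypotheses of Proposition~\ref{prop:Lstab} are met (irreducibility and reducedness of $|H|$, $H^2 > 2$, very-generality), so $E$ is simple and $\mu_H$-stable. The Mukai vector $v$ is primitive because $H$ is primitive in $\Pic(X)$, so Proposition~\ref{prop:modspace}(ii) applies and yields $M_H(v)^s$ smooth, irreducible, of dimension $2 + \langle v, v\rangle = 2\rho(g,r-1,d)$. In the borderline case $\rho = 0$ one has $\langle v,v\rangle = -2$, and Proposition~\ref{prop:modspace}(i) forces $M_H(v)^s$ to reduce to the single point $\{[E]\}$.

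For the bigness statement I invoke the numerical criterion. Since $E$ is globally generated it is nef by Remark~\ref{rem:numerical}, so Proposition~\ref{prop:bigness} on the surface $X$ reduces bigness to the positivity of $\int_X s_2(E)$. Using the inversion relations in~\eqref{eq:segre}, one computes $s_2(E) = c_1(E)^2 - c_2(E) = H^2 - d = 2g-2-d$, which is strictly positive precisely under the assumption $d < 2g-2$. Thus the specific bundle $E_{C,A}$ constructed above is big. When $\rho = 0$ this is the unique element of $M_H(v)^s$; when $\rho > 0$, both global generation and bigness are open conditions on the irreducible variety $M_H(v)^s$, and since they are verified at $[E_{C,A}]$ they persist at the generic point.

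The only conceptually non-trivial external ingredient is Lazarsfeld's theorem ensuring Brill--Noether--Petri generality of a general smooth $C \in |H|$ on a very general $K3$, which permits the choice of $A \in V^{r-1}_d(C)$; this is the main check that has to be cited. The rest of the argument is a direct computation of the top Segre class on a surface together with the structural results of Propositions~\ref{prop:modspace}, \ref{prop:bigness} and~\ref{prop:Lstab}, so the bigness portion is essentially automatic once one observes that the hypothesis $d < 2g-2$ is exactly the positivity condition for $\int_X s_2(E)$.
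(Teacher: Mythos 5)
Your proof is correct and follows essentially the same route as the paper's: construct $E=E_{C,A}$ as a Mukai--Lazarsfeld bundle, get stability and the moduli-space structure from Propositions~\ref{prop:Lstab} and~\ref{prop:modspace}, compute $\int_X s_2(E)=2g-2-d>0$ and apply Proposition~\ref{prop:bigness} for bigness, and conclude in the case $\rho>0$ by openness of global generation and bigness on the irreducible space $M_H(v)^s$. The only difference is cosmetic: you spell out the Brill--Noether input (nonemptiness of $W^{r-1}_d(C)$ for $\rho\geqslant 0$ and genericity of $A\in V^{r-1}_d(C)$) that the paper leaves implicit in the setup of \S\ref{oddrank}.
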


\begin{proof} The existence of $E$ of rank $r$, with $c_1(E) = H$ and $c_2(E) = d$, the fact that $E$ is globally generated and moreover 
$\mu_H$-stable are direct consequences of $(E1)-(E2)$ and of Proposition \ref{prop:Lstab}--(ii) above. 

Moreover, by \eqref{eq:segre}, one has that the Segre class $ \int_X s_2(E) = \int_X (c_1^2(E)-c_2(E))    = H^2-d=2g-2-d$ therefore, since $d < 2g-2$ by assumption, $ \int_X s_2(E) = (-1)^2 \int_X s_2(E)    >0$. Since $E$ is globally generated, by Proposition \ref{prop:bigness}, the bundle $E$ is therefore also big. 

The case $\rho(g,r-1,d) = 0$ clearly gives an exceptional Mukai vector $v = v(E)$, i.e. $\langle v, v \rangle = -2$, therefore $M_H(v)^s$ consists only 
of the single reduced point $[E]$, as it follows from Proposition \ref{prop:modspace}--(i), which we have already remarked to be globally generated and big. 

When otherwise $\rho(g,r-1,d) > 0$, then  $M_H(v)^s$ is smooth, irreducible of positive dimension $2 \rho(g,r-1,d)$, as it follows  
from the facts that ${\rm Pic}(X)$ is cyclic generated by $H$, $v$ is primitive and from Propositions \ref{prop:modspace}--(i) and \ref{prop:Lstab} above. At last, since global generation and 
bigness are both open conditions in our situation, the general bundle parametrized by $M_H(v)^s$ is therefore globally generated and big on $X$. 
\end{proof}

\begin{remark}\label{rem:particularcase} As a very particular case of the previous description, one can consider triples $(g,r,d)$ for which 
$\rho(g,r-1,d) = 0$, equivalently $\langle v , v \rangle = -2$. In all such cases, by Brill-Noether theory on $C$ and Lazarsfeld's results in \cite{Laz0}, 
on a general member $C \in |H|$ the Brill--Noether locus $W^{r-1}_d(C) = V^{r-1}_d(C)$ consists of finitely many reduced points whereas, from  
Proposition\;\ref{prop:modspace}--(i),  for any associated Mukai vector $v = v(E_{C,A})$, the moduli space 
$M^s_H(v)$ consists of a single reduced point, i.e. for any general member $C \in |H|$ and any line bundle $A \in V^{r-1}_d(C)$ on $C$, 
all Mukai--Lazarsfeld vector bundles $E = E_{C,A}$ as above are isomorphic each other. Cases for which $\rho(g, r-1, d) = 0$ belong to a wider class of $\mu_H$--stable vector bundles $F$ studied in \cite[Thm.\,2.1]{Kul} and 
\cite[Prop.\,2.1]{MR}, for which $$\gcd (\rk(F), \int_X c_1(F) \cdot H    ) =1 \;\;\; {\rm and}\;\;\; 2 \rk(F) \int_X c_2(F) - (\rk(F)-1) \int_X c_1(F)^2    = 2 (\rk(F)^2-1).$$Indeed, if we set 
$c_1 (F)\coloneqq H$ and $\int_X c_2(F)    \coloneqq d \geqslant 0$, we have 
$$v (F)= (\rk(F), H, g-1 + \rk(F) -d)$$as $ \int_X c_1(F) \cdot H    = H^2 = 2 (g-1)$ and the condition $\gcd (\rk(F),  \int_X c_1(F) \cdot H    ) =1$ implies in particular 
that $\rk(F)$ is certainly odd (moreover not divisible by all prime divisors of $(g-1)$), in particular $\rk(F) \geqslant 3$. Furthermore, 
condition $2 \rk(F) \int_X c_2(F)    - (\rk(F)-1) \int_X c_1(F)^2     = 2 (\rk(F)^2-1)$ above, reads in this case as $\rk(F)d =(\rk(F)-1)(g+\rk(F))$. 

Particular triples satisfying these numerical conditions are e.g. $$(g,r,d) \in \{(6,3,6), (9,3,8), (10,5,12), (12,3,10),\ldots\}.$$Under these numerical conditions, $v(F)^2 \coloneqq \langle v, v \rangle = -2$ and such a Mukai vector is called {\em exceptional} (cf.\,e.g.\,\cite[\S\,2]{MR}). From \cite[Thm.\,2.1]{Kul} and \cite[Prop.\,2.1]{MR}, for any triples $(\rk(F), d, g) = (r, d, g)$ satisfying 
$c_1(F) = H$ and the previous numerical assumptions, there exists a $\mu_H$-stable (odd) rank~$r$ 
vector bundle $F$ on $X$ with Mukai vector $v=(r, H, g-1 + r -d)$, with $rd=(r-1)(g+r)$, and $M_v(X)^s = \{ F \}$ is a single reduced point. Any vector bundle isomorphic to such a 
$F$ is called {\em exceptional}, because it is related to an exceptional Mukai vector $v = v(F)$ (cf.\,\cite{Kul,MR}). 
The previous construction shows that bundles $F$ as above arise as Mukai--Lazarsfeld vector bundles $F = E_{C,A}$ associated to pairs $(C, A)$ where 
$C \in |H|$ is a general member whereas $A$ is a line bundle on $C$ of degree $d$, with $h^0(C, A) = r$ and such that $\rho(g, r-1, d) = 0$.  
\end{remark}


\subsection{Big and stable Ulrich bundles on $K3$ surfaces}\label{evenrank}  Let $(X,H)$ be any primitively polarized K3 surface of genus $g\geq 2$, where $H$ is globally generated and ample. As in the previous sections, for any vector bundle $E$ on $X$ and any integer $n$, we will simply denote by $E(n)$ the tensor product $E \otimes H^{\otimes n}$. 

A rank~$r$ vector bundle $E$ on $X$ is said to be an \emph{Ulrich bundle} if it satisfies the conditions:
\[
	H^\ast(X, E(-1))= 0 \text{ and } H^\ast(X,E(-2)) = 0.
\]In particular, $H^i(X, E(-i)) = 0$ for all integers $i>0$ so,  by \cite[Def.\;1.8.4,\;p. 100]{Laz1}, $E$ is {\em (Castelnuovo-Mumford) $0$--regular with respect to} $H$ and, 
by \cite[Thm.\;1.8.5-(i),\;p. 100]{Laz1}, $E$ is therefore globally generated on $X$ and $H^i(X, E) = 0$ for all $i>0$. Using the Riemann--Roch--Hirzebruch formula~\cite[Corollary~15.2.1]{Fu} and equation~\eqref{eq:segre} we compute the Euler--Poincar\'e polynomial of $E$:
\[
	\chi(X, E(n)) = r (g - 1) n^2  + n \int_X c_1(E) \cdot H 
					+ \frac{1}{2} \int_X (c_1(E)^2  - 2 c_2(E)) + 2r.
\]
By assumption $\chi(X, E(-1)) = \chi(X, E(-2))= 0$, so $\chi(X, E(n)) = r (g - 1) (n + 1) (n + 2)$.
We get by identification:
\[
	\int_X c_1(E) \cdot H = 3 r (g - 1),
	\quad
	\int_X (c_1(E)^2  - 2 c_2(E)) = 4r(g - 2),
	\quad
	h^0(X, E) = 2 r (g - 1).
\]
If we assume $(X, H)$ to be very--general, in particular $\Pic(X) \cong \bZ[H]$, then $c_1(E) = \lambda H$, for some $\lambda \in {\mathbb Z}$, so we get  
$2\lambda = 3r$ thus necessarily $r$ is even and $c_1(E) = \frac{3 r}{2} H$ . Assuming this, it follows that $ \int_X c_1(E)^2    = \frac{9 r^2}{2} (g - 1)$, $ \int_X c_2(E)    = \frac{9 r^2}{4} (g - 1) - 2 r (g - 2)$, and finally using~\eqref{eq:IID4} we get:
\begin{equation}\label{eq:segulrich}
\int_X s_2(E)    = \frac{9 r^2}{4} (g - 1) + 2 r (g - 2) >0 \quad \forall g\geq 2.
\end{equation} From global generation of $E$ and from Proposition~\ref{prop:bigness}, the positivity of $ \int_X s_2(E)    = (-1)^2 \int_X s_2(E)   $ in \eqref{eq:segulrich} implies that Ulrich bundles $E$ on $K3$ surfaces are therefore big. Taking into account \cite[Thm.\;0.5]{AFO}, we get:  

\begin{theorem}\label{thm:ultrich} For any integer $h \geqslant 2$, let $(X,H)$ be a very--general polarized $K3$ surface, with polarization $H$ of genus 
$g = h+1$. Then, for every integer $a \geqslant 1$, there exists an $(8a^2 + 2a^2h +2)$--dimensional family of $\mu_H$--stable Ulrich bundles $E$ on $X$ such that 
$$\rk(E) = 2a, \;\; c_1(E) = 3 a H,\;\; \int_X c_2(E)    = 9a^2h - 4a(h-1).$$
Setting $v := v(E) = (2a, 3aH, 2a(2h-1))$ the Mukai vector of any such bundle $E$, 
the moduli space $M^s_H (v)$ is irreducible and its general point corresponds to a $\mu_H$--stable Ulrich bundle of rank 
$r=2a$ which is also globally generated and big.
\end{theorem}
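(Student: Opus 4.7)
The plan is to combine the existence result of Aprodu--Farkas--Ortega \cite[Thm.\,0.5]{AFO} with the Riemann--Roch bookkeeping carried out just before the theorem statement, and then extract irreducibility, global generation and bigness from the general machinery of Section~\ref{bundlesK3}. First, for each $a\geqslant 1$ I would invoke \cite[Thm.\,0.5]{AFO} to produce a $\mu_H$--stable Ulrich bundle $E$ of rank $r=2a$ on $(X,H)$. The Ulrich vanishings, together with the Riemann--Roch formula on $X$, force $\chi(E(n))=r(g-1)(n+1)(n+2)$; identifying coefficients yields $\int_X c_1(E)\cdot H=3r(g-1)$ and $\int_X(c_1(E)^2-2c_2(E))=4r(g-2)$. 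Since $(X,H)$ is very general we have $\Pic(X)=\mathbb{Z}[H]$, so writing $c_1(E)=\lambda H$ the first relation gives $\lambda=3a$ (in particular the rank is necessarily even), and the second relation then delivers $\int_X c_2(E)=9a^2h-4a(h-1)$. Plugging these invariants into~\eqref{eq:Mukaivect} produces the Mukai vector $v=(2a,\,3aH,\,2a(2h-1))$ claimed in the theorem, and the pairing
\[
\langle v,v\rangle \;=\; c_1(E)^2 \;-\; 2r\cdot 2a(2h-1) \;=\; 18a^2h - 8a^2(2h-1) \;=\; 2a^2(h+4),
\]
combined with Proposition~\ref{prop:modspace}, yields the expected dimension $2+\langle v,v\rangle = 8a^2+2a^2h+2$.

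Next I would address irreducibility. The Mukai vector has positive rank and satisfies $\langle v,v\rangle = 2a^2(h+4)>0$ since $h\geqslant 2$ and $a\geqslant 1$; moreover $\Pic(X)=\mathbb{Z}[H]$ trivially makes $H$ general with respect to $v$. Proposition~\ref{prop:modspace}(iii) therefore applies, so $M_H(v)^s$, which is non-empty by the existence step above, is an irreducible normal variety of the asserted dimension.

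Finally I would deduce that the general element is Ulrich, globally generated and big. Any Ulrich bundle on $(X,H)$ satisfies $H^i(E(-i))=0$ for $i>0$, hence is $0$--Castelnuovo--Mumford regular with respect to $H$ and thus globally generated by \cite[Thm.\,1.8.5]{Laz1}---the exact observation already used in the discussion preceding the theorem. The Ulrich property being cut out by a finite family of cohomological vanishings, it defines a (non-empty) open locus in the irreducible moduli space $M_H(v)^s$, so the general member is Ulrich and in particular globally generated. Bigness of this general element then follows from Proposition~\ref{prop:bigness} applied to this nef bundle, using the explicit positivity
\[
(-1)^2\int_X s_2(E) \;=\; \tfrac{9r^2}{4}(g-1) + 2r(g-2) \;=\; 9a^2h + 4a(h-1) \;>\; 0
\]
established in~\eqref{eq:segulrich}, combined with openness of bigness on the irreducible $M_H(v)^s$.

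I do not expect any substantive obstacle beyond the invocation of \cite[Thm.\,0.5]{AFO}: the Riemann--Roch identities are routine once $\Pic(X)=\mathbb{Z}[H]$ is exploited, irreducibility is handed to us by Proposition~\ref{prop:modspace}(iii), and both global generation and bigness reduce, via the formalism of Section~\ref{Pre}, to the positive Segre integral already computed in~\eqref{eq:segulrich}. The only delicate point is the semantic step of passing from ``some Ulrich bundle has these invariants'' to ``the general point of $M_H(v)^s$ corresponds to an Ulrich bundle,'' which is handled cleanly by combining the openness of the Ulrich condition with the irreducibility of $M_H(v)^s$.
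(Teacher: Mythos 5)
Your proposal is correct and follows essentially the same route as the paper: existence from \cite[Thm.\,0.5]{AFO}, the Riemann--Roch identification of the invariants and the Mukai vector $v=(2a,3aH,2a(2h-1))$ with $2+\langle v,v\rangle=8a^2+2a^2h+2$, irreducibility via Proposition~\ref{prop:modspace}--(iii) using $\Pic(X)\cong\bZ[H]$, and global generation plus bigness of the general point from $0$--regularity together with the positive Segre integral \eqref{eq:segulrich} and Proposition~\ref{prop:bigness}. The only (harmless) difference is that you make explicit the semicontinuity/openness argument showing the Ulrich locus is open and dense in $M_H(v)^s$, a step the paper instead attributes directly to the construction in \cite{AFO}.
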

\begin{proof} For the proof of the existence of Ulrich bundles as in the statement, we refer the reader for full details to the original paper \cite{AFO}. 
Here we briefly recall basic steps for the construction of the aforementioned bundles. 

For the case $a=1$, i.e. for $\rk(E) =2$, Ulrich vector bundles $E$ have been constructed once again via Mukai--Lazarsfeld bundles 
as in \S\ref{oddrank}. Namely any rank~$2$ Ulrich bundle $E$ as in the statement is given by a Mukai--Lazarsfeld vector bundle 
$E := E_{C,A}$ where the curve $C \in |{\mathcal O}_X(3)|$ is a general cubic section of $X$ (so not anymore a general member in the linear system $|H|$ of the generator of ${\rm Pic}(X)$ 
as in \S\ref{oddrank}),  which is a curve of genus $9h + 1$ and of degree $6h$ in $\mathbb{P}^{h+1}$, whereas $A $ is a line bundle 
on $C$ giving rise to a complete and base--point--free pencil of degree $5h+4$ on $C$, i.e. $A\in W^1_{5h+4}(C)$. Since the curve $C$ is in this case far from being {\em Brill-Noether general}, 
showing that a general cubic section $C$ of $X$ actually carries a pencil $|A| = g^1_{5h+4}$ inducing an Ulrich bundle $E$ on $X$ has been translated by the authors of \cite{AFO} in a variational problem which has also been used in their proof of Green's conjecture for curves on arbitrary $K3$ surfaces.

For cases $a >1$, taking direct sums of Ulrich bundles as in the rank~$2$ case above, the authors then construct splitting Ulrich bundles of any even rank $r=2a$ 
using direct sums of rank~$2$ bundles and then they show that, for a very--general $K3$ surface, these direct sums can be deformed to stable unsplitting 
Ulrich bundles on $X$ of the same rank $r=2a$.  

With the above set--up, for any $a \geqslant 1$, one has $$\rk(E) = 2a, \;\; c_1(E) = 3 a H,\;\; \int_X c_2(E)    = 9a^2h - 4a(h-1)$$and it is a straightforward computation showing that 
$$ \int_X \left(\frac{c_1(E)^2}{2} - c_2(E) \right)   + \rk(E) = 2 a (2h-1)$$ so the Mukai vector of any such $E$ is $v = (2a, 3aH, 2a(2h-1))$ as stated. The irreducibility follows from the fact that $(X, H)$ is very general and from Proposition~\ref{prop:modspace}--(iii). Finally, since the general point of  $M_H^s(v)$ is proved to be an Ulrich bundle, it is also globally generated and big, as it is $0$-regular 
with $(-1)^2 \int_X s_2(E)    = \int_X s_2(E)    >0$, as shown in \eqref{eq:segulrich}, and then by applying Proposition~\ref{prop:bigness}. 
\end{proof}


\section{Generalities on tautological vector bundles on $Hilb^k$ of a $K3$ surface}\label{genbundlesHK} In this section we will introduce some generalities on {\em tautological vector bundles} 
on varieties $X^{[k]} \coloneqq Hilb^k(X)$, i.e. the {\em Hilbert scheme} parametrizing $0$--dimensional subschemes of length $k$ on a polarized surface $(X, H)$, for any integer $k \geqslant 2$ (cf.\,e.g. \cite{Bos,Dan1,Dan2,Dan3,Kr,Sca1,Sca2,Sta}). These preliminaries will be used in \S\ref{bundlesHK}, where we focus on the case of $(X, H)$ a very--general $K3$, so that $X^{[k]}$ turns out to be a {\em Hyper--K\"ahler variety}, and where we consider tautological bundles arising from those in \S\ref{bundlesK3}.

Let $X$ be any smooth, projective complex surface. Since $X^{[k]}$ is a fine moduli space, it is endowed with a universal family 
$\Xi_k \subset X \times X^{[k]}$, together with the two natural projections 
$$ X \overset{\pi_X}{\longleftarrow} \Xi_k \overset{\pi_{X^{[k]}}}{\longrightarrow} X^{[k]},$$the map $\pi_{X^{[k]}}$ being 
flat of finite degree $k$. One can therefore associate to every coherent sheaf $F$ on X the so called 
{\em tautological sheaf associated to $F$} on $X^{[k]}$, which is defined to be 
\begin{equation}\label{eq:Fk}
F^{[k]} \coloneqq {\pi_{X^{[k]}}}_* (\pi_X^*(F)).
\end{equation} If $F$ is locally free of rank $m$, then the tautological bundle $F^{[k]}$ is locally free of rank
$km$ with fibres 
\begin{equation}\label{eq:Fkbis}
(F^{[k]})(\xi) \cong H^0(F|_{\xi}), \;\; \forall \;\; \xi \in X^{[k]}
\end{equation}(cf. \cite[Rem.\;3.6]{Kr}). Denoting by $X^k$ the $k$-th cartesian product $X^k = X \times \cdots \times X$ and by $X^{(k)} = X^k/Sym(k)$ the $k$-th symmetric product 
of $X$, it is well--known that $X^{[k]}$ is a resolution of the singularities of $X^{(k)}$ via the {\em Hilbert--Chow morphism}
$$\mu : X^{[k]} \to X^{(k)}, \;\; \xi \stackrel{\mu}{\longrightarrow} \sum_{x \in {\rm Supp}(\xi)} {\rm length}_{\xi} (x) \, x.$$

Let $\pi_i : X^k \to X$ denote the $i$-th natural projection. For any 
$L \in {\rm Pic}(X)$ the line bundle $L^{\boxtimes k} \coloneqq \otimes_{i=1}^k \pi_i^*(L)$ descends to a line bundle $\mathcal{L}\in {\rm Pic} (X^{(k)})$.  Thus, one can define the natural morphism: 
\begin{equation}\label{eq:Dk}
D_k : {\rm Pic} (X) \to {\rm Pic} (X^{[k]}), \;\;\; L \stackrel{D_k}{\longrightarrow} D_k(L) \coloneqq \mu^*(\mathcal{L})
\end{equation} (cf.\;e.g.\;\cite{Kr}) which is injective and which gives, under the assumption that $H^1(X,\mathcal{O}_X)  = 0$ (see~\cite[Theorem~6.2\&Corollary~6.3]{Fogarty}):
\begin{equation}\label{eq:Pics}
 {\rm Pic} (X^{[k]}) = D_k({\rm Pic} (X)) \oplus \mathbb{Z} [\Delta],
\end{equation} 
where $\Delta \coloneqq \det ({\mathcal O}_X^{[k]}) $ and $c_1(\Delta) = - \frac{1}{2} {\mathcal E}$, where ${\mathcal E}$ denotes the {\em $\mu$-exceptional divisor}     (cf. \cite[\S\,5, p.11]{EGL}). In this set--up, one has:
\begin{equation}\label{eq:Krugiso}
H^0(X^{[k]}, F^{[k]} \otimes D_k(L)) \cong H^0(X, F \otimes L) \otimes S^{k-1} \left( H^0(X,  L) \right)
\end{equation} (cf.\;\cite{Dan2} or \cite[formula (1),\;p.\,2]{Kr}). When $L \in {\rm Pic} (X)$ is an effective and ample line bundle on $X$, it follows that $D_k(L)$ is a big and nef line bundle on $X^{[k]}$ (cf. \cite[p.\,5]{Sta}). Moreover, if $D_k(L)$ is 
effective, for any effective divisor $D \in |D_k(L)|$, then $D$ is set--theoretically described as $D= \left\{\xi \in X^{[k]} \; | \;  \xi \cap {\rm Supp}(D) \neq \emptyset \right\}$ (cf. \cite[p.\,5]{Sta}).


\subsection{Tautological bundles and stability}\label{S:taustability} The notion of slope--(semi)stability can be formally extended to big and nef line bundles (cf.\;\cite[p.\;437]{Tsu}). Indeed, using notation and terminology as above, one has the following: 

\begin{theorem}\label{thm:Stapleton} (cf.\;\cite[Thm.\,1.4]{Sta}) Let $(X,H)$ be a smooth, irreducible polarized surface, where $H$ a globally generated and ample line bundle on $X$.  Let 
$k \geqslant 2$ be any integer.  Let $F$ be a rank~$r$ vector bundle on $X$, where $r \geqslant 1$. 
If $F \neq \mathcal O_S$ and if moreover $F$ is $\mu_H$-stable on $X$, then the tautological bundle $F^{[k]}$ is $\mu_{D_k(H)}$-stable on $X^{[k]}$. 
\end{theorem}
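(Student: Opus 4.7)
The overall strategy is to reduce $\mu_{D_k(H)}$-stability of $F^{[k]}$ on $X^{[k]}$ to the hypothesised $\mu_H$-stability of $F$ on $X$ by restricting to a family of test subvarieties on which $F^{[k]}$ splits off a canonical copy of $F$.

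First I would carry out the slope bookkeeping. Combining the standard formula $c_1(F^{[k]}) = D_k(c_1(F)) - \tfrac{r}{2}\cE$ (cf.\;\cite{EGL}) with the fact that $D_k(H) = \mu^{\ast}\mathcal{L}$ is pulled back from $X^{(k)}$ via the Hilbert--Chow morphism $\mu$ while $\cE$ is $\mu$-exceptional, the projection formula gives $D_k(H)^{2k-1}\cdot \cE = 0$. Consequently
\[
\mu_{D_k(H)}(F^{[k]}) \;=\; \frac{D_k(c_1(F))\cdot D_k(H)^{2k-1}}{rk} \;=\; \lambda(k,H)\,\mu_H(F)
\]
for an explicit positive constant $\lambda(k,H)$ depending only on $k$ and $H^2$, obtained by pushing forward to $X^{(k)}$ and using the symmetrisation $H^{\boxtimes k}\leadsto \mathcal{L}$. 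The same identity applies, \emph{mutatis mutandis}, to any torsion-free subsheaf arising from a subsheaf of $F$, so a destabilising slope inequality on $X^{[k]}$ translates into one on $X$.

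Next I would introduce the test embedding. For a generic reduced $\xi_0 = \{x_1,\ldots,x_{k-1}\}\in X^{[k-1]}$, the assignment $x\mapsto \xi_0\cup\{x\}$ extends to a morphism $\iota_{\xi_0}\colon \widetilde X_{\xi_0}\hookrightarrow X^{[k]}$, where $\widetilde X_{\xi_0}$ is the blow-up of $X$ at the $k-1$ points. Away from the exceptional divisors, the fibrewise description~\eqref{eq:Fkbis} produces a canonical splitting
\[
\iota_{\xi_0}^{\ast} F^{[k]} \;\cong\; F \,\oplus\, \bigoplus_{i=1}^{k-1} F_{x_i}\otimes \cO_X \;\cong\; F \,\oplus\, \cO_X^{\oplus r(k-1)}.
\]
Suppose for contradiction that $\cG\subset F^{[k]}$ is a saturated $\mu_{D_k(H)}$-destabilising subsheaf. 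Pulling back along $\iota_{\xi_0}$ for very general $\xi_0$ and then restricting further to a Mehta--Ramanathan-general complete-intersection curve $C\subset \widetilde X_{\xi_0}$ cut out by a sufficiently positive multiple of $H$ produces a saturated subsheaf $\cG'\subset F|_C \oplus \cO_C^{\oplus r(k-1)}$ whose slope, by the identity of paragraph one, still violates the corresponding inequality on $X$. Projecting $\cG'$ onto the summand $F|_C$ either yields a proper subsheaf of $F|_C$ of slope at least $\mu_H(F|_C)$, contradicting the $\mu_H$-stability of $F$ via Mehta--Ramanathan, or vanishes identically, placing $\cG'$ inside $\cO_C^{\oplus r(k-1)}$; the latter is incompatible with the destabilising slope prescribed by paragraph one together with the hypothesis $F\neq \cO_X$, which forces $\mu_H(F) \neq \mu_H(\cO_X^{\oplus r(k-1)}) = 0$.

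The main obstacle, in my view, is the genericity step: one must verify that the family $\{\iota_{\xi_0}\}_{\xi_0\in X^{[k-1]}}$ is rich enough that a saturated destabilising $\cG$ necessarily pulls back non-trivially along generic $\iota_{\xi_0}$, and that the slope inequality is preserved under the compound restriction to $\iota_{\xi_0}(\widetilde X_{\xi_0})$ and further to a Mehta--Ramanathan-general curve therein. This requires a careful compatibility argument between the big-and-nef polarisation $D_k(H)$ on $X^{[k]}$, the blow-down $\widetilde X_{\xi_0}\to X$, and the ample polarisation $H$ on $X$, together with Bogomolov-type boundedness to control the generic rank of $\iota_{\xi_0}^{\ast}\cG$. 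This compatibility is precisely the technical core of Stapleton's original argument in~\cite{Sta}.
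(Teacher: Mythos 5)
The paper does not prove this statement: it is recalled verbatim from Stapleton \cite[Thm.~1.4]{Sta}, so your proposal can only be judged on its own terms. Your bookkeeping in the first paragraph (using $c_1(F^{[k]})=D_k(c_1(F))-\tfrac{r}{2}\mathcal{E}$ and $D_k(H)^{2k-1}\cdot\mathcal{E}=0$) and the construction of the surfaces $\widetilde X_{\xi_0}\to X^{[k]}$ with $\iota_{\xi_0}^{\ast}F^{[k]}\cong F\oplus\mathcal{O}^{\oplus r(k-1)}$ off the exceptional locus are correct. But the argument has two genuine gaps, the second of which is fatal.

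First, the transfer of the destabilising inequality to your curves is asserted, not proved. The $D_k(H)$-degree of a saturated $\mathcal{G}\subset F^{[k]}$ is the intersection $c_1(\mathcal{G})\cdot D_k(H)^{2k-1}$ on the $2k$-fold; Mehta--Ramanathan only governs restriction to general complete intersections of (multiples of) the polarisation itself, not to curves lying inside the codimension-$2(k-1)$ surfaces $\iota_{\xi_0}(\widetilde X_{\xi_0})$. What you would actually need is that the class of the curve $\{\xi_0\cup\{x\}:x\in\Gamma\}$ is a positive multiple of $D_k(H)^{2k-1}$ in $N_1(X^{[k]})$ (this requires knowing $N^1(X^{[k]})=D_k(N^1(X))\oplus\mathbb{R}[\mathcal{E}]$, which is not automatic for irregular surfaces), together with a general-position argument ensuring $\deg_C(\iota_{\xi_0}^{\ast}\mathcal{G}|_C)=c_1(\mathcal{G})\cdot[C]$ and that the restriction remains a subsheaf of the expected rank. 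You flag this, but it is not the main problem.

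Second, and decisively: even granting all of the above, no contradiction follows on the curve, because $F^{[k]}$ restricted to your test curves is itself unstable. Indeed $F|_C\oplus\mathcal{O}_C^{\oplus r(k-1)}$ has slope $\deg(F|_C)/(rk)$, so it is destabilised by the summand $F|_C$ whenever $\mu_H(F)>0$, by $\mathcal{O}_C^{\oplus r(k-1)}$ whenever $\mu_H(F)<0$, and is only strictly semistable when $\mu_H(F)=0$. Your dichotomy (``the projection of $\mathcal{G}'$ to $F|_C$ is a proper destabilising subsheaf, or zero'') omits exactly the dangerous case in which the projection is all of $F|_C$ --- for instance $\mathcal{G}'=F|_C\oplus 0$ --- which is a genuine destabilising subsheaf of the restriction and cannot be excluded by slope considerations on a single curve; likewise the case $\mathcal{G}'\subseteq\mathcal{O}_C^{\oplus r(k-1)}$ is not ruled out when $\mu_H(F)\leq 0$, and your attempt to dispose of it via ``$F\neq\mathcal{O}_X$ forces $\mu_H(F)\neq 0$'' is false: a stable bundle different from $\mathcal{O}_X$ may well have slope zero (the hypothesis $F\neq\mathcal{O}_X$ serves to exclude the equal-slope subsheaf $\mathcal{O}_{X^{[k]}}\subset\mathcal{O}_X^{[k]}$, not to control the sign of the slope). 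The real content of the theorem is the global statement that no subsheaf of $F^{[k]}$ can restrict, along all such curves simultaneously, to the ``moving-point'' summand (resp.\ the ``fixed-points'' summand); establishing this requires a global ingredient --- the $\mathfrak{S}_k$-symmetry/monodromy exchanging the $k$ points, or Stapleton's analysis via the universal family $\Xi_k\to X^{[k]}$ --- which is entirely absent from your sketch. As written, the proposal does not prove the theorem.
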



\subsection{Segre classes of tautological bundles}\label{segre_tautHilb}

Consider the incidence variety $X^{[k-1,k]}\subset X^{[k-1]}\times X\times X^{[k]}$ parametrizing triples $(\xi,x,\xi')\in X^{[k-1]}\times X\times X^{[k]}$ such that $\xi\subset \xi'$ with residual subscheme supported at the point $x$. This variety has dimension $2k$. We denote the projections as follows:
\[
\xymatrix{ & X^{[k-1,k]}\ar[r]^\rho\ar[dl]^\varphi\ar[dr]^\psi & X\\ X^{[k-1]} && X^{[k]}}
\]
Given a triple $(\xi,x,\xi')\in X^{[k-1,k]}$, either $x$ is already in the support of $\xi$, meaning that $\xi'$ is obtained by thickening $\xi$ at the point~$x$, or not. We denote by $ \mathfrak{E}   $ the set of those triples such that $x$ is in the support of $\xi$. 

If $\cI_{\Xi_k}$ denotes the ideal sheaf of the universal family $\Xi_k \subset X \times X^{[k]}$, we will denote by $\bP(\cI_{\Xi_k})$ the {\em blowing--up of $X \times X^{[k]}$ w.r.t. the coherent sheaf of ideals $\cI_{\Xi_k}$} in the sense of \cite[Def.,\;Cap.\,7,\;p.163]{H}. Then, there are isomorphisms
$$X^{[k-1,k]}\cong \bP(\cI_{\Xi_k})\cong \mathrm{Bl}_{\Xi_k}(X\times X^{[k]})$$
(cf.\,\cite[\S\;1.2]{Lehn}). From these isomorphisms, we see that the $\mathfrak{E}   $ turns out to be the exceptional divisor of the blow--up on the right hand side.

Let $\bH_k\coloneqq \bigoplus_{j=0}^{4k} H^j(X^{[k]},\bQ)$ and $\bH\coloneqq\bigoplus_{k\geq 0} \bH_k$. For any $\alpha\in H^\ast(X,\bQ)$ and any $i\geq 1$, the Nakajima operator $q_i(\alpha)\in\End(\bH)$ is defined by
\[
q_i(\alpha)(\beta)\coloneqq \psi_\ast(\varphi^\ast \beta\cdot \rho^\ast \alpha), \quad \forall \beta \in \bH.
\]
Following always~\cite{Lehn}, we denote by $\partial\in\End(\bH)$ the operator acting on $\bH_k$ by cup-product with the class $c_1(\cO_X^{[k]})$ and
 we define the \emph{derived Nakajima operators} by:
\[
q_i'(\alpha)\coloneqq [\partial, q_i(\alpha)] = \partial \circ q_i(\alpha) - q_i(\alpha)\circ\partial.
\]
(Since the K3 surface $X$ has no odd cohomology, there is no alternate sign in the definition of the Lie bracket).

Starting from $s(F^{[0]}) = 1$, the total Segre classes can be computed recursively using the following formula which, in particular, extends either to 
non--primitive line bundles or to higher--rank vector bundles computations on Segre classes in~\cite{MOP_rank1, MOP, V}:

\begin{proposition}\label{prop:segre_taut}
Let $F$ be a rank $r\geq 1$ vector bundle on a K3 surface $X$. Then for any $k\geq 1$ one has:
\[
s(F^{[k]}) = \frac{1}{k} \left(\sum_{i=0}^{2k} \sum_{j=0}^{\min\{i,2\}} (-1)^{i-j} \binom{r-1+i}{r-1+j} q_1^{(i-j)}(s_j(F))\right)\left(s(F^{[k-1]})\right).
\]
\end{proposition}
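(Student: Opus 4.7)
The plan is to combine four ingredients: (i) the standard short exact sequence on the nested Hilbert scheme $X^{[k-1,k]}$ relating $\psi^{*} F^{[k]}$ to $\varphi^{*} F^{[k-1]}$ and a twist of $\rho^{*} F$ by the exceptional divisor $\mathfrak{E}$; (ii) multiplicativity of the total Segre class in short exact sequences; (iii) the twist formula~\eqref{eq:dualsegre} applied to $\rho^* F \otimes \cO(-\mathfrak{E})$, together with the fact that $s_j(F)=0$ for $j\geq 3$ since $\dim X=2$; (iv) the identification, after pushing forward by $\psi$, of products of powers of the exceptional class with the action of iterated derived Nakajima operators on~$\bH$.

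First, I would recall the tautological exact sequence on $X^{[k-1,k]}$ (cf.~\cite{Lehn}):
\begin{equation*}
0 \longrightarrow \varphi^{*} F^{[k-1]} \longrightarrow \psi^{*} F^{[k]} \longrightarrow \rho^{*} F \otimes \cO(-\mathfrak{E}) \longrightarrow 0,
\end{equation*}
which, by multiplicativity of the total Segre class, gives
\begin{equation*}
s\bigl(\psi^{*} F^{[k]}\bigr) \;=\; \varphi^{*} s\bigl(F^{[k-1]}\bigr) \cdot s\bigl(\rho^{*} F \otimes \cO(-\mathfrak{E})\bigr).
\end{equation*}
Second, since $\dim X = 2$, the only non-zero Segre classes of $F$ are $s_{0}(F),s_{1}(F),s_{2}(F)$; applying~\eqref{eq:dualsegre} for each $0\leq i\leq 2k = \dim X^{[k-1,k]}$ yields
\begin{equation*}
s_i\bigl(\rho^{*} F \otimes \cO(-\mathfrak{E})\bigr) \;=\; \sum_{j=0}^{\min\{i,2\}} (-1)^{i-j}\binom{r-1+i}{r-1+j}\,\rho^{*} s_j(F)\cdot c_1\bigl(\cO(-\mathfrak{E})\bigr)^{i-j}.
\end{equation*}

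Third, I would push forward by $\psi$. Since $\psi$ is flat and finite of degree $k$, the projection formula gives $\psi_{*}\psi^{*}=k\cdot\mathrm{id}$, and therefore
\begin{equation*}
k\cdot s\bigl(F^{[k]}\bigr) \;=\; \sum_{i=0}^{2k}\sum_{j=0}^{\min\{i,2\}} (-1)^{i-j}\binom{r-1+i}{r-1+j}\,\psi_{*}\!\Bigl(\varphi^{*} s(F^{[k-1]})\cdot \rho^{*} s_j(F)\cdot c_1\bigl(\cO(-\mathfrak{E})\bigr)^{i-j}\Bigr).
\end{equation*}
Fourth, I would identify each such pushforward with the action of a derived Nakajima operator. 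By definition $q_1(\alpha)(\beta) = \psi_{*}(\varphi^{*}\beta\cdot\rho^{*}\alpha)$; Lehn's commutator calculus (relating the exceptional class on $X^{[k-1,k]}$ to the derivation $\partial=\cup\,c_1(\cO_X^{[k]})$ on $\bH$) then yields inductively
\begin{equation*}
\psi_{*}\!\bigl(\varphi^{*}\beta\cdot\rho^{*}\alpha\cdot c_1(\cO(-\mathfrak{E}))^{m}\bigr) \;=\; q_{1}^{(m)}(\alpha)(\beta).
\end{equation*}
Substituting $\alpha = s_j(F)$, $\beta = s(F^{[k-1]})$ and dividing by $k$ gives the claimed recursion.

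The main obstacle is Step four: rigorously translating a single geometric product by a power of the exceptional class on the blowup $X^{[k-1,k]}\cong \mathrm{Bl}_{\Xi_k}(X\times X^{[k]})$ into the iterated commutator that defines $q_{1}^{(m)} = \mathrm{ad}(\partial)^{m} q_{1}$ on the Fock-space cohomology $\bH$. The rest of the argument is essentially bookkeeping with binomial coefficients; once the dictionary between the exceptional-divisor class and the derivation $\partial$ is in place, the formula drops out by matching degrees in $i-j$.
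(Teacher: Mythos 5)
Your proposal is correct and follows essentially the same route as the paper: the Lehn/EGL exact sequence on $X^{[k-1,k]}$, Whitney multiplicativity of the total Segre class, the twist formula \eqref{eq:dualsegre} with $s_j(F)=0$ for $j\geq 3$, and Lehn's Lemma~3.9 identifying $\psi_{*}\bigl(\lambda^{\nu}\cdot\varphi^{*}\beta\cdot\rho^{*}\alpha\bigr)$ with $q_1^{(\nu)}(\alpha)(\beta)$ — the paper packages the last step as the operator identity obtained by conjugating cup-product with $s(F^{[k]})$ by $q_1(1)$ and evaluating on the unit $\frac{1}{(k-1)!}q_1(1)^{k-1}\vac$, which encodes exactly your $\psi_{*}\psi^{*}=k\cdot\mathrm{id}$. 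Two small fixes: the sub- and quotient bundles in your exact sequence are interchanged (the twisted bundle $\rho^{*}F\otimes\cO(-\mathfrak{E})$ is the kernel, not the quotient — harmless for the Whitney formula), and $\psi$ is not flat and finite (it has positive-dimensional fibres over non-curvilinear punctual loci for $k\geq 3$); it is proper, surjective and generically finite of degree $k$ between irreducible $2k$-dimensional varieties, which still gives $\psi_{*}\psi^{*}=k\cdot\mathrm{id}$ on cohomology by the projection formula.
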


\begin{proof}
Following~\cite[equation (11)]{Lehn} (see also~\cite[Lemma~2.1]{EGL}), for any rank $r$ vector bundle $F$ on $X$, we have an exact sequence relating the tautological bundles associated to $F$ on $X^{[k-1]}$ and $X^{[k]}$:
\[
0\longrightarrow \rho^\ast F\otimes\cO_{X^{[k-1,k]}}(- \mathfrak{E}   )\longrightarrow \psi^\ast F^{[k]}\longrightarrow \varphi^\ast F^{[k-1]}\longrightarrow 0.
\]
The basic properties of the total Segre class give:
\begin{equation}\label{eq:rec_segre}
\psi^\ast s(F^{[k]})=\varphi^\ast s(F^{[k-1]})\cdot s\left( \rho^\ast F\otimes\cO_{X^{[k-1,k]}}(-\mathfrak{E}) \right).
\end{equation}
We put $\lambda\coloneqq c_1(\cO_{X^{[k-1,k]}}(-   \mathfrak{E}   ))$. Using formula~\eqref{eq:dualsegre} we get:
\begin{equation}\label{eq:segre_tensor_line}
s\left( \rho^\ast F\otimes\cO_{X^{[k-1,k]}}(-\mathfrak{E}) \right)
= \sum_{i=0}^{2k} \sum_{j=0}^{\min\{i,2\}} (-1)^{i-j} \binom{r-1+i}{r-1+j} \rho^\ast s_j(F)\lambda^{i-j}.
\end{equation}

For any $k$, we denote by $\seg(F^{[k]})\in\End(\bH_k)$ the operator acting on $\bH_k$ by cup-product with the total Segre class~$s(F^{[k]})$. 
For any $\beta\in \bH_{k-1}$, following the same lines as the proof of~\cite[Theorem 4.2]{Lehn} we compute, using \eqref{eq:rec_segre} and~\eqref{eq:segre_tensor_line}:

\begin{align*}
\seg(F^{[k]})\circ q_1(1)\circ \seg(F^{[k-1]})^{-1}(\beta)
&=s(F^{[k]})\cdot\psi_\ast\left(\varphi^\ast\left(\seg(F^{[k-1]})^{-1}(\beta)   \right)\right)\\
&=\psi_\ast \left(\psi^\ast s(F^{[k]}) \cdot \varphi^\ast\left(s(F^{[k-1]})^{-1}\right)\cdot\varphi^\ast(\beta)    \right)\\
&=\psi_\ast \left( \sum_{i=0}^{2k} \sum_{j=0}^{\min\{i,2\}} (-1)^{i-j} \binom{r-1+i}{r-1+j} \rho^\ast s_j(F)\lambda^{i-j}\varphi^\ast(\beta)  \right).
\end{align*}
By~\cite[Lemma~3.9]{Lehn}, for any $\beta\in \bH$ and any $\nu\geq 0$, we have
$q^{(\nu)}_1(\alpha)(\beta) = \psi_\ast(\lambda^\nu\cdot\varphi^\ast \beta\cdot \rho^\ast \alpha)$,
so we obtain:
\[
\seg(F^{[k]})\circ q_1(1)\circ \seg(F^{[k-1]})^{-1}(\beta)
= \sum_{i=0}^{2k} \sum_{j=0}^{\min\{i,2\}} (-1)^{i-j} \binom{r-1+i}{r-1+j} q_1^{(i-j)}(s_j(F)\cdot \beta).
\]
We denote by $\vac$ the unit in the ring $\bH_0$. Recall that $\frac{1}{k!}q_1(1)^k {\bf{1}}$ is the unit in the ring $\bH_k$ for any $k$. 
 By the above formula with $\beta=\frac{1}{(k-1)!}q_1(1)^{k-1} {\bf{1}}$ we get the expected recursion relation:
\[
s(F^{[k]}) = \frac{1}{k} \left(\sum_{i=0}^{2k} \sum_{j=0}^{\min\{i,2\}} (-1)^{i-j} \binom{r-1+i}{r-1+j} q_1^{(i-j)}(s_j(F))\right)(s(F^{[k-1]})).
\]
\end{proof}

In particular, since each operator $q_n^{(\nu)}(\alpha)$ has cohomological degree $2\nu + 2(n-1) + \deg(\alpha)$, we deduce a recursion formula for the $d$-th Segre class $s_d(F^{[k]}) \in \bH_{2d}$ for any $0\leq d\leq 2k$:
\begin{equation}\label{eq:dth_Segre}
	s_d(F^{[k]}) = \frac{1}{k} \left(\sum_{i\geq \max\{0, d-2(k-1)\}}^{d} \sum_{j=0}^{\min\{i,2\}} (-1)^{i-j} \binom{r-1+i}{r-1+j} q_1^{(i-j)}(s_j(F))\right)\left(s_{d-i}(F^{[k-1]})\right).
\end{equation}

\begin{corollary}\label{cor:segre_taut}
Let $F$ be a rank $r\geq 1$ vector bundle on a K3 surface $X$. Then:
\begin{align*}
\int_{X^{[2]}} s_4(F^{[2]})    
&= 
12\binom{r + 3}{4} 
- \frac{1}{2} \binom{r + 2}{2} \cdot \int_{X} s_1(F)^2
- \left(\frac{r^2 +3 r + 3}{2}\right) \cdot \int_{X} s_2(F)
+ \frac{1}{2} \cdot \left(\int_{X} s_2(F)\right) ^ 2;\\
   \int_{X^{[3]}} s_6(F^{[3]})     
&=
-2 \left(4 r ^ 3 + 21 r ^ 2 + 35 r + 20 \right) \binom{r + 2}{3}
+ \frac{1}{3}\binom{r + 2}{2}\left(3 r ^ 2 + 8 r + 6\right) \cdot \int_{X} s_1(F)^2\\
& + \frac{1}{6} \left(6 r ^ 4 + 35 r ^ 3 + 72 r ^ 2 + 61 r + 20\right) \cdot \int_{X} s_2(F)
- \frac{1}{2}\binom{r + 2}{2} \cdot \int_{X} s_1(F)^2\cdot \int_{X} s_2(F)\\
& - \left(\frac{r ^ 2 + 3 r + 3}{2}\right) \cdot \left(\int_{X} s_2(F)\right) ^ 2
+ \frac{1}{6} \cdot\left(\int_{X} s_2(F)\right) ^ 3.
\end{align*}
\end{corollary}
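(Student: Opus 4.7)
The plan is to apply the recursion of Proposition~\ref{prop:segre_taut} iteratively: once to obtain $s(F^{[2]})$, and twice to obtain $s(F^{[3]})$, starting from $s(F^{[1]})=s(F)\in H^{\ast}(X,\bQ)$ under the canonical identification $X^{[1]}\cong X$ with $F^{[1]}\cong F$. In each case I would then extract the top-degree component via~\eqref{eq:dth_Segre} and evaluate the resulting Heisenberg-operator expression against the fundamental class of $X^{[k]}$.

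For $s_4(F^{[2]})$, I would specialize~\eqref{eq:dth_Segre} to $(k,d)=(2,4)$: the outer index runs over $i\in\{2,3,4\}$ because $X^{[1]}$ has cohomological dimension $4$, and the inner index over $j\in\{0,\dots,\min(i,2)\}$, yielding only a handful of nonzero terms. Each surviving summand is a class in $H^8(X^{[2]},\bQ)$ of the shape $q_1^{(i-j)}(s_j(F))\,s_{4-i}(F)$. Integration against $[X^{[2]}]=\tfrac{1}{2}q_1(1)^2{\bf 1}$ is performed through the Heisenberg commutation relations of~\cite{Lehn}, and the vanishing $c_1(X)=0$ on a K3 surface kills several would-be contributions coming from the unfolding of the derived operators $q_1^{(\nu)}=[\partial,[\partial,\ldots,[\partial,q_1(-)]\ldots]]$. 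Collecting coefficients produces the claimed expression in the invariants $\int_X s_1(F)^2$ and $\int_X s_2(F)$, with the constant $12\binom{r+3}{4}$ arising from the purely numerical term at $j=0$, $i=4$.

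For $s_6(F^{[3]})$, I would iterate once more: plugging the expression for $s(F^{[2]})$ into~\eqref{eq:dth_Segre} with $(k,d)=(3,6)$ (so $i\in\{2,\dots,6\}$ and $j\in\{0,1,2\}$) presents $s_6(F^{[3]})$ as a linear combination of monomials $q_1^{(\nu_1)}(\alpha_1)\,q_1^{(\nu_2)}(\alpha_2)\,{\bf 1}\in\bH_3$ with $\alpha_\ell\in\{1,s_1(F),s_2(F)\}$ and $\nu_1+\nu_2\leq 4$. Each derived Nakajima operator is unfolded through iterated commutators with $\partial$, and Lehn's description of $\partial$ (considerably simplified by $c_1(X)=0$) reduces every integral over $X^{[3]}$ to a polynomial in $\int_X s_1(F)^2$, $\int_X s_2(F)$, and the Euler contribution $\int_X c_2(X)=24$, which is absorbed into the leading constant $-2(4r^3+21r^2+35r+20)\binom{r+2}{3}$.

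The principal obstacle is combinatorial bookkeeping: the $k=3$ expansion yields a substantial number of monomials that collapse to the compact expression of the statement only after systematic application of the Heisenberg relations, and the $\nu\geq 2$ derived operators require the most care. As an independent sanity check I would specialize to $r=1$ and compare with the rank-one Segre integrals in~\cite{MOP_rank1,V}, and for general $r$ with the generating series of~\cite{MOP}; the agreement in the cases $k=2,3$ is precisely what is asserted at the end of the introduction, and provides a reliable cross-verification of the coefficients.
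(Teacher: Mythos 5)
Your proposal follows exactly the paper's own route: apply the recursion \eqref{eq:dth_Segre} starting from $s(F)=q_1(1)\vac+q_1(s_1(F))\vac+q_1(s_2(F))\vac$, unfold the derived Nakajima operators via commutators with $\partial$ using Lehn's relations (with the K3 simplifications, e.g. $q_1'(1)\vac=0$ and the Virasoro description of $q_2'$, which is where $\int_X c_2(X)=24$ enters the constant term), and integrate against $q_1([x])q_1([x])\vac$, with the heavier $k=3$ bookkeeping handled by computer and checked against the Marian--Oprea--Pandharipande series. This matches the paper's proof in both strategy and verification, so no gap to report.
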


\begin{proof} We apply \eqref{eq:dth_Segre}, starting from $s(F)= 1 + s_1(F) + s_2(F) =  q_1(1)\vac + q_1(s_1(F))\vac + q_1(s_2(F))\vac$:
\begin{align*}
2 s_4(F^{[2]}) &= 
\binom{r+1}{r-1} q_1^{(2)}(1) q_1(s_2(F))\vac
-(r+1) q_1'(s_1(F))q_1(s_2(F))\vac
+ q_1(s_2(F))q_1(s_2(F))\vac
\\
&-\binom{r+2}{r-1} q_1^{(3)}(1)q_1(s_1(F))\vac
+\binom{r+2}{r} q_1^{(2)}(s_1(F))q_1(s_1(F))\vac
-(r+2) q_1'(s_2(F))q_1(s_1(F))\vac
\\
&+\binom{r+3}{r-1} q_1^{(4)}(1)q_1(1)\vac
-\binom{r+3}{r} q_1^{(3)}(s_1(F))q_1(1)\vac
+\binom{r+3}{r+1} q_1^{(2)}(s_2(F))q_1(1)\vac.
\end{align*}
Let us explain in details the computation of the first term of the sum:
\begin{align*} 
q_1^{(2)}(1) q_1(s_2(F))\vac 
&= (\partial q_1'(1) - q_1'(1)\partial)q_1(s_2(F))\vac \\
&= \partial q_1'(1) q_1(s_2(F))\vac - q_1'(1)\partial q_1(s_2(F))\vac.
\end{align*}
Since $q_1(s_2(F))\vac\in H^4(X)$, we have $\partial q_1(s_2(F))\vac = 0$, so:
\begin{align*}
q_1^{(2)}(1) q_1(s_2(F))\vac 
&= \partial [q_1'(1), q_1(s_2(F))] \vac + \partial q_1(s_2(F))q_1'(1)\vac.
\end{align*}
By~\cite[Theorem~3.10]{Lehn}, we have $[q_1'(1), q_1(s_2(F))] = - q_2(s_2(F))$, and we observe that $q_1'(1)\vac = \partial q_1(1)\vac = c_1(\mathcal{O}_X) = 0$. The relation between the derived Nakajima operators and the Virasoro operators given again in ~\cite[Theorem~3.10]{Lehn} gives, using $\partial \vac = 0$:
\[
\partial q_2(s_2(F))\vac =  q_2'(s_2(F))\vac = (q_1(1)q_1(1))\delta_\ast (s_2(F))\vac = \int_X s_2(F) \cdot q_1([x])q_1([x])\vac,
\]
where $\delta_\ast\colon H^\ast(X)\to H^\ast(X)\otimes H^\ast(X)$ is the push-forward map associated to the diagonal embedding and $[x]\in H^4(X)$ is the class of a point. Using similar computations, that can be performed efficiently with the help of a computer, we get:
\begin{align*}
q_1^{(2)}(1) q_1(s_2(F))\vac &= - \int_X s_2(F) \cdot q_1([x])q_1([x])\vac\\
q_1'(s_1(F))q_1(s_2(F))\vac &= 0 \\
q_1(s_2(F))q_1(s_2(F))\vac &= \left( \int_X s_2(F)\right)^2 \cdot q_1([x])q_1([x])\vac\\
q_1^{(3)}(1)q_1(s_1(F))\vac &= 0\\
q_1^{(2)}(s_1(F))q_1(s_1(F))\vac &= - \int_X s_1(F)^2 \cdot q_1([x])q_1([x])\vac\\
q_1'(s_2(F)q_1(s_1(F))\vac &=  0 \\
q_1^{(4)}(1)q_1(1)\vac &= 24\cdot q_1([x])q_1([x])\vac\\
q_1^{(3)}(s_1(F))q_1(1)\vac &= 0\\
q_1^{(2)}(s_2(F))q_1(1)\vac &= -\int_X s_2(F) \cdot q_1([x])q_1([x])\vac.
\end{align*}
Since $\int_{X^{[2]}} q_1([x])q_1([x])\vac = 1$, the value of $ \int_X s_4(F^{[2]})    $ follows directly. Similar computations give the formula for $ \int_X s_6(F^{[3]})    $. \end{proof}

\begin{remark}\label{rem:oprea}
The recursive formula in Proposition \ref{prop:segre_taut} allows us to find all Segre classes for tautological bundle $F^{[k]}$. If we restrict to top ones, there is also 
another enumerative approach to compute them which was pointed out to us by Dragos Oprea. 
 As an example, a closed formula for $k=2$ can be found in~\cite[Remark~2]{MOP}, which in fact coincides with that in Corollary~\ref{cor:segre_taut} above, but the calculations are very different. In 
Corollary~\ref{cor:segre_taut} we give a closed formula for top Segre classes for $k=3$, which is not explicitly written in~\cite{MOP}. 
\end{remark}

\subsection{Global generation of tautological bundles} \label{ss:global_gen_taut} This section will be focused on finding sufficient 
conditions for global generation of tautological vector bundles on $X^{[k]}$, for any integer $k \geqslant 2$. To do so, we first recall some useful terminology 
(cf.\;e.g.\;\cite{Knutsen} for line bundles and \cite{TT} for the more general set--up of vector bundles).

Let $X$ be any smooth, irreducible projective surface and let $G$ be a rank~$r$ vector bundle on $X$, for $r \geqslant 1$. Let $k \geqslant 1$ be an integer. 
One says that $G$ is {\em $(k-1)$--very ample} on $X$ if, for any $0$--dimensional subscheme $\xi$ of $X$ of length $h^0(\xi, {\mathcal O}_{\xi}) = k$, 
the natural evaluation map 
$$H^0(X, G) \stackrel{ev_{G, \xi}}{\longrightarrow} H^0(X, G \otimes {\mathcal O}_{\xi})$$is surjective. Notice that $G$ is $0$-very ample if and only if it is globally generated.

In this set--up, we prove the following result which will be used later on. 

\begin{proposition}\label{prop:speriamo} Let $X$ be a smooth, irreducible projective surface and let $G$ be a rank  $r$ vector bundle on $X$, with $r \geqslant 1$. Let $k \geqslant 2$ be any integer.  
\begin{itemize}
\item[(i)]\label{prop:speriamo_i} $G$ is $(k-1)$-very ample on $X$ if and only if the tautological bundle $G^{[k]}$ is globally generated (i.e. $0$--very ample) on $X^{[k]}$.

\item[(ii)]
 If $G$ is globally generated on $X$ and if $L$ is a $(k-1)$--very ample line bundle on $X$, then $G \otimes L$ is $(k-1)$--very ample on $X$, equivalently 
$(G \otimes L)^{[k]}$ is globally generated (i.e. $0$--very ample) on $X^{[k]}$. 
\end{itemize}
\end{proposition}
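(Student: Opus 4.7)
My plan for (i) is to reduce global generation of $G^{[k]}$ on $X^{[k]}$ to the usual evaluation maps on $X$. By \eqref{eq:Fkbis} the fiber of $G^{[k]}$ at $\xi\in X^{[k]}$ is canonically $H^0(\xi,G|_\xi)$, and by the Krüger isomorphism \eqref{eq:Krugiso} applied with $L=\mathcal{O}_X$ (using $D_k(\mathcal{O}_X)=\mathcal{O}_{X^{[k]}}$ and $S^{k-1}(H^0(X,\mathcal{O}_X))=\mathbb{C}$) we obtain a canonical identification $H^0(X^{[k]},G^{[k]})\cong H^0(X,G)$. The key compatibility to check is that, under these two identifications, the fiberwise evaluation $H^0(X^{[k]},G^{[k]})\to(G^{[k]})(\xi)$ becomes the restriction $\mathrm{ev}_{G,\xi}\colon H^0(X,G)\to H^0(\xi,G|_\xi)$. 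This is immediate from the construction $G^{[k]}={\pi_{X^{[k]}}}_*\pi_X^*G$ on the universal family: a section $s\in H^0(X,G)$ pulls back to $\pi_X^*s$ on $\Xi_k$, and its image in the fiber over $\xi$ (the restriction to $\pi_{X^{[k]}}^{-1}(\xi)\cong\xi$) is precisely $s|_\xi$. Granting this, $G^{[k]}$ is globally generated iff $\mathrm{ev}_{G,\xi}$ is surjective for every length-$k$ subscheme $\xi\subset X$, which is exactly $(k-1)$-very ampleness of $G$; this yields both directions of~(i).

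For (ii), by (i) it suffices to prove that $G\otimes L$ is $(k-1)$-very ample on $X$. I would start from a surjection $\mathcal{O}_X^{\oplus N}\twoheadrightarrow G$ coming from global generation of $G$, and tensor with $L$ to obtain $L^{\oplus N}\twoheadrightarrow G\otimes L$. Restriction to any length-$k$ subscheme $\xi\subset X$ is right exact, so $(L|_\xi)^{\oplus N}\twoheadrightarrow (G\otimes L)|_\xi$, and passing to global sections preserves surjectivity (since $\xi$ is $0$-dimensional), giving $H^0(\xi,L|_\xi)^{\oplus N}\twoheadrightarrow H^0(\xi,(G\otimes L)|_\xi)$. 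Then the commutative square
\[
\xymatrix{
H^0(X,L)^{\oplus N}\ar[r]^{\alpha}\ar[d]_{\beta} & H^0(X,G\otimes L)\ar[d]^{\gamma}\\
H^0(\xi,L|_\xi)^{\oplus N}\ar@{->>}[r]_{\delta} & H^0(\xi,(G\otimes L)|_\xi)
}
\]
has $\beta$ surjective by the $(k-1)$-very ampleness of $L$ and $\delta$ surjective by the previous step, so $\gamma\circ\alpha=\delta\circ\beta$ is surjective, forcing $\gamma$ to be surjective. Since $\xi$ was arbitrary, this gives the $(k-1)$-very ampleness of $G\otimes L$, and the equivalent statement about $(G\otimes L)^{[k]}$ then follows from part~(i).

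The main subtle point I expect is the compatibility statement in (i) that identifies the fiberwise evaluation on $X^{[k]}$ with the restriction map $\mathrm{ev}_{G,\xi}$ on $X$; once that naturality is checked, part~(i) is immediate and part~(ii) reduces to a right-exactness argument on a $0$-dimensional scheme together with the elementary diagram chase above.
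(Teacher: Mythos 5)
Your proof is correct. For part (i) you follow essentially the same route as the paper: identify $H^0(X^{[k]},G^{[k]})$ with $H^0(X,G)$ via \eqref{eq:Krugiso} (taking $L=\mathcal{O}_X$) and the fibre $(G^{[k]})(\xi)$ with $H^0(G\otimes\mathcal{O}_\xi)$ via \eqref{eq:Fkbis}, and observe that under these identifications evaluation of global sections of $G^{[k]}$ at the point $\xi$ becomes $ev_{G,\xi}$; the compatibility you single out is precisely the naturality of the push--pull construction along the universal family, which the paper uses implicitly. For part (ii), however, your argument is genuinely different from, and more elementary than, the paper's. The paper adapts the $k$-jet ampleness technique of Beltrametti--Sommese \cite{BS}: it decomposes $\xi$ according to its support, takes for each point $x_i$ the subspace $V_i\subset H^0(X,L)$ of sections vanishing on the residual scheme $\eta_i$, multiplies these by global sections of $G$, and proves surjectivity onto each local piece $H^0(G\otimes L\otimes\mathcal{O}_{\xi_i})$ via a Kronecker-product rank argument. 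You instead choose a surjection $\mathcal{O}_X^{\oplus N}\twoheadrightarrow G$, tensor with $L$, restrict to $\xi$ (right exactness), use that taking sections on the finite scheme $\xi$ preserves surjectivity, and conclude with a short diagram chase from the surjectivity of $ev_{L,\xi}^{\oplus N}$. This is shorter, avoids the support decomposition and the multiplication-map bookkeeping, and in fact works verbatim when $L$ is a $(k-1)$-very ample bundle of arbitrary rank rather than a line bundle. What the paper's approach buys is somewhat finer information (for each component $\xi_i$ it exhibits sections of $G\otimes L$ vanishing on the residual scheme that generate $(G\otimes L)\otimes\mathcal{O}_{\xi_i}$), but for the statement as given your route fully suffices.
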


\begin{proof}\text{}
	
\noindent(i) $(\Rightarrow)$ If $G$ is $(k-1)$--very ample, then for any $0$--dimensional subscheme $\xi$ of $X$ of length $h^0({\mathcal O}_{\xi}) = k$, 
the evaluation map $$H^0(X, G) \stackrel{ev_{G, \xi}}{\longrightarrow} H^0(X, G \otimes {\mathcal O}_{\xi})$$is surjective. By  \eqref{eq:Fkbis}, we have that $H^0(X, G \otimes {\mathcal O}_{\xi}) \cong (G^{[k]})(\xi)$. Moreover, by \eqref{eq:Krugiso}, one has $H^0(X^{[k]}, G^{[k]}) \cong H^0(X, G)$ therefore the surjectivity of $ev_{G, \xi}$, for any $\xi \in X^{[k]}$, 
implies that $G^{[k]}$ is globally generated on $X^{[k]}$. 

\noindent
($\Leftarrow$) Conversely, assume that $G^{[k]}$ is globally generated on $X^{[k]}$. Thus, there exists a surjective map 
$$H^0(X^{[k]}, G^{[k]}) \otimes {\mathcal O}_{X^{[k]}} \rightarrow G^{[k]} \to 0 $$ so, for every $\xi \in X^{[k]}$, we have a surjective map  
$$H^0(X^{[k]}, G^{[k]}) \otimes {\mathcal O}_{X^{[k]}, \xi} \rightarrow G^{[k]}(\xi) \to 0.$$As above, by \eqref{eq:Krugiso}, we have  
$H^0(X^{[k]}, G^{[k]}) \simeq H^0(X,G)$ moreover, by \eqref{eq:Fkbis},  we have $G^{[k]}(\xi) \simeq H^0(X, G \otimes {\mathcal O}_{\xi})$. 
This implies that, for every $\xi \in X^{[k]}$, the exact sequence $0 \to {\mathcal I}_{\xi/X} \otimes G \to G \to G \otimes {\mathcal O}_{\xi} \to 0$ on $X$ gives rise in cohomology to the 
surjective map $H^0(X, G) \to  H^0(X, G \otimes {\mathcal O}_{\xi}) \to 0$, i.e., $G$ is $(k-1)$-very ample on $X$.

\vskip 2pt

\noindent
(ii) The proof is inspired by that in \cite[Lemma\;2.2]{BS}. Let $\xi$ be any $0$-dimensional subscheme of $X$ of length 
$h^0({\mathcal O}_{\xi}) = k$; the  $(k-1)$-very ampleness of $L$ ensures that, for any such $\xi$, 
the evaluation map $$H^0(X, L) \stackrel{ev_{L, \xi}}{\longrightarrow} H^0(X, L \otimes {\mathcal O}_{\xi})$$is surjective. 
 
Let ${\rm Supp} (\xi) = \{ x_1, x_2, \ldots, x_s\}$ be the support of $\xi$, where $1 \leqslant s \leqslant k$ is an integer; notice that 
the equality $s=k$ holds if and only if $\xi$ consists of $k$ distinct reduced points of $X$ whereas $s=1$ holds if and only if 
$\xi$ is a $0$-dimensional subscheme of $X$ concentrated at just one point and of length $k$. The proof below is given for $s \geq 2$. The case $s=1$ can be dealt with similarly, and it will not be specified any further.

For any integer $1 \leqslant i \leqslant s$, denote by $\xi_i \subset \xi$ the maximal subscheme of $\xi$ whose support is the point $x_i$, i.e. 
denoting by $\eta_i:= \xi \setminus \xi_i$ the {\em residual} $0$-dimensional subscheme of $\xi_i$ in $\xi$, one has 
${\rm Supp}(\eta_i) = \{x_1, x_2, \ldots , x_{i-1}, x_{i+1}, \ldots, x_s\}$.  Set $k_i:= h^0({\mathcal O}_{\xi_i}) = {\rm length(\xi_i)} \geqslant 1$, so that 
$\sum_{i=1}^s k_i = k$ and $h^0({\mathcal O}_{\eta_i}) = {\rm length(\eta_i)} = k - k_i$.  

Consider the vector subspace
$$V_{i} := H^0(X, L \otimes {\mathcal I}_{\eta_i/X}) \subset H^0(X, L),$$where ${\mathcal I}_{\eta_i/X} \subset {\mathcal O}_X$ denotes the ideal sheaf of $\eta_i$ in $X$, 
$1 \leqslant i \leqslant s$. The $(k-1)$--ampleness of $L$ ensures that $V_i \subsetneq H^0(X, L)$, that $V_i \neq V_j$ for $1 \leqslant i \neq j \leqslant s$ and, moreover, that 
the restriction of $ev_{L, \xi}$ to $V_i$ induces a surjective map 
\[ 
V_i \stackrel{\alpha_i}{\twoheadrightarrow} H^0(X, L \otimes {\mathcal I}_{\eta_i/X} \otimes {\mathcal O}_{\xi_i}) \cong H^0(L \otimes {\mathcal O}_{\xi_i}) \cong {\mathbb C}^{k_i}_{(x_i)},
\] $1 \leqslant i \leqslant s$, where the first isomorphism on the right follows from the fact that ${\rm Supp}(\eta_i) \cap {\rm Supp} (\xi_i) = \emptyset$ whereas the second isomorphism  holds by the definition of $k_i = {\rm length(\xi_i)}$ and $x_i = {\rm Supp} \{\xi_i\}$. Here $ {\mathbb C}^{k_i}_{(x_i)}$ is the stalk at the point~$x_i$.

Similarly, global generation of $G$ ensures that, for any $1 \leqslant i \leqslant s$, the evaluation map  
$$H^0(X, G) \stackrel{ev_{G, x_i}}{\longrightarrow} H^0(X, G \otimes {\mathcal O}_{x_i})$$is surjective. To ease notation,
set $\beta_i := ev_{G, x_i}$, so that we have epimorphisms 
\[ 
H^0(X, G) \stackrel{\beta_i}{\twoheadrightarrow} H^0(X, G \otimes {\mathcal O}_{x_i}) \cong {\mathbb C}^{r}_{(x_i)},
\] $1 \leqslant i \leqslant s$, where the isomorphism on the right follows from the fact that $G$ has rank $r$. 

Consider $H^0(X, G) \otimes V_i \subset H^0(X,G) \otimes H^0(X,L)$, $1 \leqslant i \leqslant s$. If we denote by 
$\mu:= \mu_{G,L}$ the natural multiplication map among global sections 
$H^0(X, G) \otimes H^0(X, L) \stackrel{\mu}{\longrightarrow} H^0(X, G \otimes L)$, 
set $$\mathcal E_i := \mu (H^0(X, G) \otimes V_i) \subset H^0(X, G \otimes L).$$By definition of $V_i$, notice that 
$\mathcal E_i \subseteq H^0(X, G \otimes L \otimes {\mathcal I}_{\eta_i/X})$. We consider the evaluation map $$ev_{G \otimes L, \xi} : H^0(X, G \otimes L) \longrightarrow H^0(X, G \otimes L \otimes {\mathcal O}_{\xi}),$$ and we set 
$\rho_i := {ev_{G \otimes L, \xi}}_{|_{\mathcal E_i}}$, $1 \leqslant i \leqslant s$. By definition of $V_i$, one has that 
$$\mathcal E_i \stackrel{\rho_i}{\longrightarrow} H^0(G \otimes L \otimes {\mathcal O}_{\xi_i}), \;\;\; 1 \leqslant i \leqslant s.$$

\begin{claim}\label{cl:surj} The map $\rho_i$ is surjective, for any $1 \leqslant i \leqslant s$. 
\end{claim}
\begin{proof}[Proof of Claim \ref{cl:surj}] For any $1 \leqslant i \leqslant s$ we have a commutative diagram 
\[
\begin{array}{ccc}
H^0(X,G) \otimes V_i & \stackrel{\beta_i \otimes \alpha_i}{\longrightarrow} & {\mathbb C}^{r}_{(x_i)} \otimes  {\mathbb C}^{k_i}_{(x_i)} \\
\downarrow^{\mu_i} & & \downarrow^{\varphi_i} \\
\mathcal E_i & \stackrel{\rho_i}{\longrightarrow} & {\mathbb C}^{k_i\,r}_{(x_i)}
\end{array}
\]where $\mu_i := \mu_{|_{H^0(X,G) \otimes V_i}}$ is surjective, by the definition of $\mathcal E_i$, and $\varphi_i$ is an isomorphism. Since $\varphi_i \circ (\beta_i \otimes \alpha_i) = \rho_i \circ \mu_i$, to prove the surjectivity of $\rho_i$ it suffices to show that 
$\beta_i \otimes \alpha_i$ is surjective. In turn, this directly follows from the fact that both $\alpha_i$ and $\beta_i$ are surjective, as observed above, 
and from the properties of tensor product (or {\em Kronecker product}) of linear maps, i.e. ${\rk}(\beta_i \otimes \alpha_i) = {\rk} (\beta_i) {\rk} 
(\alpha_i)$ (cf.\;e.g.\cite[Ex.\;4.2.1]{HJ}). Therefore $\rho_i$ is surjective, for any $1 \leqslant i \leqslant s$. 
\end{proof} 

By definition of $\mathcal E_i$ and by Claim \ref{cl:surj}, global sections in $\mathcal E_i$ vanish at $\eta_i$ but generate the stalk 
$G \otimes L \otimes {\mathcal O}_{\xi_i}$, $1 \leqslant i \leqslant s$. Since $\mathcal E_i \subset H^0(X, G \otimes L)$, for any $1 \leqslant i \leqslant s$, 
and since $\rho_i = {ev_{G \otimes L, \xi}}_{|_{\mathcal E_i}}$, this implies that global sections of $H^0(X, G \otimes L)$ separate the scheme $\xi$ via the evaluation 
map $ev_{G \otimes L, \xi}$, i.e. $ev_{G \otimes L, \xi}$ is surjective. Since $\xi$ is arbitrary, previous arguments imply that $G \otimes L$ is $(k-1)$-very ample.  The last part of (ii) directly follows from part (i). 
\end{proof}

\begin{remark} Given $F$ a rank $r$ vector bundle on $X$, \cite[Proposition 2]{MOP} gives sufficient numerical conditions on $\langle v(F), v(F)\rangle$  so that $F$ turns out to be 
$(k-1)$-very ample. These formulas however do not apply to our cases since the bundles that we study in \S\ref{bundlesHK} below are such that $c_1(F)$ is a multiple of the polarization. Besides, Proposition~\ref{prop:speriamo} gives sufficient geometric conditions to get $(k-1)$-very ampleness for a bundle $F = G \otimes L$ and so 
global generation of $F^{[k]}$, with no assumptions on $v(F)$.
\end{remark}


\section{On some big and stable tautological bundles on $Hilb^k$ of a $K3$ surface}\label{bundlesHK} In this section we extend results proved 
in \S\ref{bundlesK3} for $K3$'s to Hyper--K\"ahler varieties given by $X^{[k]} \coloneqq Hilb^k(X)$, the {\em Hilbert scheme} parametrizing $0$--dimensional 
subschemes of $X$ of length $k$, where $k \geq 2$ is an integer and where 
$(X,H)$ is a very--general, primitively polarized $K3$ surface. 
To ease notation, we will set $Y\coloneqq X^{[k]}$  and $H_Y \coloneqq D_k(H)$, according to \eqref{eq:Dk}. 
As already observed, since $H$ is globally generated, ample and effective, then $H_Y$ is a big and nef line bundle on $Y$. Moreover, for any locally free sheaf $\mathcal F$ on 
$Y$ and any positive integer $n$ we will simply set$${\mathcal F} (n) \coloneqq {\mathcal F} \otimes H_Y^{\otimes n}.$$

Our discussion unfolds in a fashion that parallels the examples in \S\ref{bundlesK3}. Therefore, we will start with the tangent bundles.


\subsection{The tangent bundle of $Hilb^k$ of a $K3$}\label{tangentHilb} From \S\,\ref{tangent}, when $(X,H)$ is a very--general $K3$ surface with polarization of genus $g \geq 2$, Theorem~\ref{thm:tangK3} gives sufficient conditions for $T_X(n)$ to be big and $\mu_H$-stable. We will make use  of Theorem~\ref{thm:tangK3} to prove the main result of this section, namely Theorem \ref{thm:tangHilbK3} below.

\begin{theorem}\label{thm:tangHilbK3} Let $(X, H)$ be a very--general $K3$ surface of genus $g \geq 2$. Let $Y= X^{[k]}$ be the Hilbert scheme parametrizing $0$--dimensional subschemes of $X$ of length $k$ and let $H_Y= D_k(H)$ be the big and nef line bundle defined as in \eqref{eq:Dk}. Let $T_Y$ denote the tangent bundle of $Y$. 
Then,  the vector bundles $(T_X)^{[k]}(n)$ (cf.\,\eqref{eq:Fk}) and $T_Y(n)$ are $\mu_{H_Y}$-stable, of rank $2k$ on $Y$, for any integer $n$. 
Furthermore, $(T_X)^{[k]} (n)$ and $T_Y(n)$ are also big if: 

\begin{itemize}
\item[(1)] $n \geq 5$, for $g =2$
\item[(2)] $n \geq 4$, for $g=3$
\item[(3)] $n \geq 3$, for $4 \leq g \leq 9$ or $g=11$
\item[(4)] $n \geq 2$, for $ g \geq 10$ but $g \neq 11$.
\end{itemize} 
\end{theorem}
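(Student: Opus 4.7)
The statement splits naturally into three parts: the rank, the $\mu_{H_Y}$-stability, and the bigness. Since the bigness hypotheses $(1)$--$(4)$ coincide exactly with those of Theorem~\ref{thm:tangK3}, the overall strategy is to lift results from $X$ to $Y$ by combining Stapleton's theorem for tautological bundles, the classical stability of the tangent bundle of an irreducible hyper-K\"ahler manifold, and a transfer of sections of symmetric powers from $X$ to $Y$.

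The rank assertion is immediate: by \eqref{eq:Fk}, $(T_X)^{[k]}$ has rank $r\cdot k=2k$, and $T_Y$ has rank $\dim Y=2k$, both preserved by the line-bundle twist $H_Y^n$. For the $\mu_{H_Y}$-stability of $(T_X)^{[k]}(n)$, I would invoke \eqref{eq:tangproperties}-(iii) to recall that $T_X$ is $\mu_H$-stable on $X$, and then apply Theorem~\ref{thm:Stapleton} with $F=T_X\neq \cO_X$ to deduce the $\mu_{H_Y}$-stability of $(T_X)^{[k]}$ on $Y$; slope-stability is preserved by tensoring with the line bundle $H_Y^n$. For the $\mu_{H_Y}$-stability of $T_Y(n)$, I would appeal to the fact that the tangent bundle of an irreducible holomorphic symplectic manifold is slope-stable with respect to any K\"ahler polarization (a consequence of the Beauville--Bogomolov decomposition together with Verbitsky's holonomy argument), and then extend this to the big and nef class $H_Y=D_k(H)$ by a limiting argument using that $c_1(T_Y)=0$, so that slopes of potential destabilizing subsheaves vary continuously in the positive cone.

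The bigness assertion is the main obstacle. By Theorem~\ref{thm:tangK3}, under each of $(1)$--$(4)$ the bundle $T_X(n)$ is big on $X$, hence $H^0(X, S^m T_X(n))\neq 0$ for some $m\geq 1$. The difficulty is that $H_Y=D_k(H)$ is only big and nef, not ample, so Proposition~\ref{prop:Laz} does not apply directly on $Y$. My plan is first to produce nonzero sections of the requisite symmetric powers of the bundles in question: for $(T_X)^{[k]}(n)$, \eqref{eq:Krugiso} already gives $H^0(Y,(T_X)^{[k]}(n))\neq 0$ as soon as $T_X(n)$ is effective on $X$ (which holds under $(1)$--$(4)$ by Lemma~\ref{lem:effectivness}), and to pass to symmetric powers I would push forward $S_k$-invariant sections of $(S^m T_X(n))^{\boxtimes k}$ from $X^k$ to $X^{(k)}$ and pull back via the Hilbert--Chow morphism $\mu\colon Y\to X^{(k)}$ to obtain sections of $S^m\bigl((T_X)^{[k]}(n)\bigr)$. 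Combined with the decomposition of $H_Y$ as a sum of an ample $\bQ$-divisor and an effective one, this should yield bigness via a suitable extension of Proposition~\ref{prop:Laz}. For $T_Y(n)$ I would either relate $T_Y$ to $(T_X)^{[k]}$ through the Lehn-type exact sequence on the incidence variety $X^{[k-1,k]}$ used in the proof of Proposition~\ref{prop:segre_taut}, or apply the numerical criterion Proposition~\ref{prop:bigness} after separately proving nefness of $T_Y(n)$ in the specified ranges; the nefness step is where I expect the most technical work, since $T_Y$ itself is not generally nef on a hyper-K\"ahler variety.
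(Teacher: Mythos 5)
Your treatment of the rank and of the $\mu_{H_Y}$-stability of $(T_X)^{[k]}(n)$ matches the paper (Stapleton's Theorem~\ref{thm:Stapleton} plus invariance of slope-stability under line-bundle twists), and your plan for the bigness of $(T_X)^{[k]}(n)$ — effectivity via \eqref{eq:Krugiso} and Lemma~\ref{lem:effectivness}, then a Kodaira-type decomposition of the big and nef class $H_Y$ into ample plus effective to extend Proposition~\ref{prop:Laz} — is essentially the argument the paper carries out on $\mathbb{P}(E)$; the detour through $S_k$-invariant sections of $(S^mT_X(n))^{\boxtimes k}$ on $X^{(k)}$ is unnecessary (the $m=1$ case supplied by \eqref{eq:Krugiso} suffices) and its descent to $S^m\bigl((T_X)^{[k]}(n)\bigr)$ is not justified. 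The genuine gaps are in the two statements about $T_Y$. For stability of $T_Y(n)$: stability with respect to every ample class (which indeed follows from the strong stability of $T_Y$, cf.\ the paper's Remark~\ref{rm:stability}) does not pass to the boundary class $H_Y=D_k(H)$ by a limiting argument — strict slope inequalities only survive as weak inequalities in the limit, so your argument yields at best $\mu_{H_Y}$-\emph{semi}stability, and a subsheaf $W$ with $c_1(W)\cdot H_Y^{2k-1}=0$ is not excluded. The paper closes exactly this loophole differently: given $W\subset T_Y$, it uses Stapleton's exact sequence $0\to (T_X)^{[k]}\to T_Y\to Q\to 0$ (with $Q$ torsion supported on the exceptional divisor $\mathcal{E}$) to produce $A\subset (T_X)^{[k]}$ and $B\subset Q$ with $c_1(W)=c_1(A)+c_1(B)$; then $c_1(A)\cdot H_Y^{2k-1}<0$ by $\mu_{H_Y}$-stability of $(T_X)^{[k]}$, while $c_1(B)$ is a multiple of $\mathcal{E}$ and $\mathcal{E}\cdot H_Y^{2k-1}=0$, giving the strict inequality.

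For the bigness of $T_Y(n)$ neither of your two proposed routes works as stated. The Lehn-type sequence on $X^{[k-1,k]}$ relates $\varphi^\ast F^{[k-1]}$, $\psi^\ast F^{[k]}$ and $\rho^\ast F(-\mathfrak{E})$; it does not involve $T_Y$ at all, so it cannot transfer bigness from $(T_X)^{[k]}(n)$ to $T_Y(n)$. The numerical route via Proposition~\ref{prop:bigness} requires $T_Y(n)$ to be nef, and this fails: on a rational curve $C$ contracted by the Hilbert--Chow morphism one has $H_Y\cdot C=0$, so twisting by $nH_Y$ does not change $T_Y|_C$, which contains $T_C\cong\mathcal{O}_{\mathbb{P}^1}(2)$ and has total degree $0$, hence has a negative summand; so $T_Y(n)$ is never nef and the criterion is inapplicable. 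The correct bridge is again Stapleton's sequence $0\to (T_X)^{[k]}\to T_Y\to Q\to 0$: since the two bundles have the same rank $2k$ and the quotient is torsion, the injection $(T_X)^{[k]}(n)\hookrightarrow T_Y(n)$ together with the characterization of bigness by maximal growth of sections of powers of the tautological line bundle on the projectivization (as in the paper's appeal to \cite[Lemma~2.2.3]{Laz1}) carries bigness of $(T_X)^{[k]}(n)$ over to $T_Y(n)$ in the ranges (1)--(4).
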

\begin{proof} From \cite[Theorem B]{Sta}, one has an exact sequence 
$$0 \to (T_X)^{[k]} \to T_Y \to Q \to 0,$$where $Q$ is a torsion sheaf on $Y$ supported on the exceptional divisor ${\mathcal E}$; this implies that 
$ (T_X)^{[k]}$ and $T_Y$ are vector bundles of the same rank $2k$ on $Y$. The same conclusion holds for $ (T_X)^{[k]}(n)$ and $T_Y(n)$, for any integer $n$.

Focusing on $\mu_{H_Y}$--stability, from \eqref{eq:tangproperties}--(iii) we know that $T_X$ is $\mu_H$--stable on $X$ so, from  Theorem \ref{thm:Stapleton}, 
$(T_X)^{[k]}$ is $\mu_{H_Y}$-stable on $Y$. Since slope--stability is preserved under tensor product via line--bundles, one deduces that $(T_X)^{[k]}(n)$ 
is $\mu_{H_Y}$-stable, for any integer $n$. 

The following $\mu_{H_Y}$-stability argument has been communicated to us by Dragos Oprea. Let $W\subset T_Y$ be a subsheaf with $0<\rk W<\rk T_Y$. Since $\deg(T_Y) = c_1(T_Y) \cdot H_Y^{2k-1} = 0$, we have to show that $c_1(W) \cdot H_Y^{2k-1} < 0$. Define subsheafs $A$ and $B$ completing the commutative diagram with exact rows:
\[
\xymatrix{0 \ar[r] & A \ar[r] \ar@{^(->}[d] & W \ar[r] \ar@{^(->}[d] & B \ar[r] \ar@{^(->}[d] & 0 \\
	0\ar[r]  & T_X^{[k]}\ar[r]  & T_Y \ar[r] & Q \ar[r]  & 0}
\]
Since $A$ is nonzero (otherwise $W$ would be a torsion sheaf) and $T_X^{[k]}$ is $H_Y$-stable, we have $c_1(A)
\cdot H_Y^{2k-1} < 0$. Since $Q$~is supported on $\mathcal{E}$, $c_1(B)$ is a multiple of $\mathcal{E}$ hence $c_1(B) \cdot H_Y^{2k-1}=  0$, the result follows so we conclude that $T_Y$ is $\mu_{H_Y}$--stable. As above, since slope--stability is preserved under tensor product via line--bundles, one deduces that $T_Y(n)$ is $\mu_{H_Y}$-stable, for any integer $n$.

The rest of the proof will be devoted to the  ``bigness part" of the statement. From Lemma \ref{lem:effectivness}, we know that $T_X(n_0(g)) \coloneqq T_X \otimes H^{\otimes n_0(g)}$ is an effective vector bundle, the integer $n_0(g)$ depending on $g$ defined as follows:   

\[
\begin{array}{|c|c|c|c|c|c|c|c|c|c|c|c|}
\hline
g & 2 & 3 & 4 & 5 & 6 & 7 & 8 & 9  & 10 & 11 & \geq 12\\\hline
n_0(g) & 4 & 3 & 2 & 2 & 2 & 2 & 2 & 2  & 1 & 2 & 1\\\hline 
\end{array}
\]
 
Applying \eqref{eq:Krugiso} with $F = T_X$ and $L = H^{\otimes n_0(g)} = n_0(g) H$ (recall we interchangeably identify divisors and line bundles and use additive notation for divisor 
equivalently to tensor products of line bundles) one has 
$$H^0(X^{[k]}, (T_X)^{[k]} \otimes D_k(n_0(g) H) \cong H^0(X, T_X (n_0(g))) \otimes S^{k-1} \left( H^0(X,  n_0(g) H ) \right)$$ which shows that the vector bundle 
$$E\coloneqq (T_X)^{[k]} \otimes D_k(n_0(g) H)) = (T_X)^{[k]} (n_0(g) H_Y)$$ is an effective vector bundle on $Y$. 

Considering the projective bundle 
$\mathbb{P}(E) \stackrel{\pi}{\longrightarrow} Y$, then $\xi\coloneqq c_1({\mathcal O}_{\mathbb{P}(E) }(1))$ is an effective line bundle on $Y$. 
Taking into account that $H_Y$ is big and nef (since $H$ is very--ample on $X$) then, from \cite[Corollary\,2.2.7, p.\,141]{Laz1}, it follows that for any ample line bundle $A_Y$ on $Y$ 
there exist a positive integer $m_Y\coloneqq m_{A_Y}$ and an effective line bundle $N_Y \coloneqq N_{A_Y}$ such that 
\begin{equation}\label{eq1}
	m_Y H_Y \sim A_Y + N_Y,
\end{equation}
where $\sim$ denote linear equivalence of divisors on $Y$. On the other hand, since $\xi$ is $\pi$-ample then, from \cite[Proposition 1.7.10, p. 97]{Laz1}, it follows that 
\begin{equation}\label{eq2}
\xi + \pi^*(m A_Y)
\end{equation}
is ample, for any integer $m \gg 0$. Notice that, from~\eqref{eq1} above, one has that for any integer $m \gg 0$
$$m (m_Y H_Y) \sim m A_Y + m N_Y.$$Therefore, using~\eqref{eq2}, for any $m \gg 0$ one has 
$$m\,m_Y (\xi + \pi^*(H_Y)) = m\,m_Y \xi + m \pi^*(m_Y H_Y) \sim m\,m_Y \xi + \pi^*(m A_Y) + \pi^*(m N_Y).$$
Notice that 
$$m\,m_Y \xi + \pi^*(m A_Y) + \pi^*(m N_Y) = (\xi + \pi^*(m A_Y)) + \left( (m\,m_Y -1) \xi + \pi^*(m N_Y) \right)$$where the first summand on the right-side of the equality is ample 
by~\eqref{eq2} whereas the second summand is effective since $m\,m_Y -1 > 0$ and since $\xi$ and $\pi^*(m N_Y)$ are both effective. Thus, from \cite[Corollary 2.2.7 (iii), p. 141]{Laz1}, it follows that 
$\xi + \pi^*(H_Y)$ is a big line bundle on $\mathbb{P}(E)$ and so 
$$E \otimes H_Y = \left((T_X)^{[k]} \otimes (n_0(g) H_Y)\right) \otimes H_Y = (T_X)^{[k]} \otimes {\mathcal O}_Y ((n_0(g) +1) H_Y)$$is a big vector bundle on $Y$. 
Since $H_Y$ is big and nef, then $(T_X)^{[k]} \otimes {\mathcal O}_Y (n H_Y)$ is a big vector bundle, for any $n \geq n_0(g) +1$.

Finally, consider the exact sequence 
$$ 0 \to (T_X)^{[k]} \to T_Y \to Q \to 0$$from \cite[Theorem B]{Sta}, where $ (T_X)^{[k]}$ and $T_Y$ are vector bundles on $Y$ of the same rank $2k$ whereas $Q$ 
is a torsion sheaf on $Y$. Using the fact that $(T_X)^{[k]} \otimes {\mathcal O}_Y (n H_Y)$ is big for any $n \geq n_0(g) +1$, the exact sequence above, the fact that 
$(T_X)^{[k]}$ and $T_Y$ have the same rank and finally the characterization of bigness in terms of global sections of the corresponding tautological divisors 
on $\mathbb{P}(E)$ as in \cite[Lemma 2.2.3, p. 139]{Laz1}, it follows that $T_Y(n H_Y)$ is big for any $n \geqslant n_0(g) +1$, which completes the proof of the statement. 
\end{proof}

\begin{remark}\label{rm:stability}
	Since $Y$ is an irreducible holomorphic symplectic manifold, it satisfies in particular assumptions as in \cite[Definition\;(8.16.2)]{GKP}: indeed, in \cite[beginning of\;\S2]{HP} it is observed that when $Y$ is smooth 
	(as it occurs in our case), by the purity of the branch locus, any {\em quasi--\'etale} morphism $f: Y' \to Y$ (i.e. $f$ \'etale in codimension one, using same terminology as in \cite[Definition\;(8.16.2)]{GKP}) is actually \'etale. On the other hand, since $Y$ is simply connected, any \'etale $f:Y' \to Y$ is actually an isomorphism. Thus, the global generation assumption on exterior algebra of forms is 
	satisfied. One can therefore apply \cite[Prop.\;8.20]{GKP} to get that $T_Y$ is {\em strongly stable} (in the sense of \cite[Def.\,7.2]{GKP}). This implies in particular that 
	$T_Y$ is $\mu_A$-stable w.r.t. any ample line bundle $A \in {\rm Pic}(Y)$. Being $\mu_A$--stable for any (ample) polarization $A$, then in particular $T_Y$ is {\em simple}, i.e. 
	$\End(T_Y) \cong \mathbb{C}$ (cf.\;\cite[Corollary\,1.2.8]{HL}).
\end{remark}


\subsection{Big and stable tautological bundles on $X^{[k]}$ arising from line bundles on $X$ a very--general $K3$}\label{tautHilblb} 
Examples of further tautological bundles on $Y= X^{[k]}$, which are big and $\mu_{D_k(H)}$--stable, when $X$ is a very--general primitively polarized $K3$ surface of genus $g$ can be easily 
obtained as follows.  

\begin{theorem}\label{thm:lbHilb} Let $k \geqslant 2$ be any integer and let $(X,H)$ be a very--general primitively 
polarized $K3$ surface of genus $g > 2k -2$. Let $L_n := H^{\otimes n} \in {\rm Pic}(X)$, where $n \geqslant 1$ any integer. Set $Y:= X^{[k]}$.  
Then, the rank~$k$ tautological vector bundle $(L_n)^{[k]}$ on $Y$ is globally generated and $\mu_{D_k(H)}$--stable. 
If moreover $   \int_Y s_{2k} ((L_n)^{[k]})    >0$ then $(L_n)^{[k]}$ is also a big vector bundle on $X^{[k]}$.   
\end{theorem}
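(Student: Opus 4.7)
The plan is to prove each of the three assertions in turn, by invoking the three main building blocks already established earlier in the paper, namely Voisin's global generation result for $L_1^{[k]}$ combined with the propagation Proposition~\ref{prop:speriamo}, Stapleton's Theorem~\ref{thm:Stapleton} on stability of tautological bundles, and the numerical criterion for bigness of nef vector bundles in Proposition~\ref{prop:bigness}.

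\textbf{Global generation.} First I would argue that $L_n$ is $(k-1)$-very ample on $X$. For $n=1$ this is precisely Voisin's theorem, which applies because $g>2k-2$ by hypothesis. For $n\geqslant 2$, I would write $L_n = L_{n-1}\otimes L_1$ and apply Proposition~\ref{prop:speriamo}(ii) with $G=L_{n-1}$ and $L=L_1$: the hypothesis that $G$ is globally generated holds since $H$ is globally generated on a very--general primitively polarized $K3$ surface (so every positive power of $H$ is too), while the hypothesis that $L_1$ is $(k-1)$-very ample was just established. Then Proposition~\ref{prop:speriamo}(i) translates the $(k-1)$-very ampleness of $L_n$ into global generation of the rank $k$ tautological bundle $(L_n)^{[k]}$ on $Y = X^{[k]}$.

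\textbf{Stability.} Since $L_n$ is a line bundle on $X$ it is trivially $\mu_H$-stable, and $L_n\neq\cO_X$ because $n\geqslant 1$ and $H$ is a nontrivial effective polarization. Theorem~\ref{thm:Stapleton} applied to $F=L_n$ (taking $r=1$, which is permitted by that theorem) then yields that $(L_n)^{[k]}$ is $\mu_{D_k(H)}$-stable on $Y$.

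\textbf{Bigness.} By the global generation established in the first step and by Remark~\ref{rem:numerical}, the tautological bundle $(L_n)^{[k]}$ is nef on $Y$. Since $\dim Y = 2k$ is even, the sign $(-1)^{\dim Y}$ in Proposition~\ref{prop:bigness} is equal to $+1$, hence the numerical condition of that proposition reads exactly $\int_Y s_{2k}((L_n)^{[k]}) > 0$. Under this hypothesis, Proposition~\ref{prop:bigness} immediately gives that $(L_n)^{[k]}$ is big.

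The main conceptual obstacle is the first step: ensuring that $(k-1)$-very ampleness propagates from $L_1$ to $L_n$. This is where the hypothesis $g>2k-2$ is crucial, since it guarantees Voisin's base case, and where the iterative geometric argument of Proposition~\ref{prop:speriamo}(ii) does the actual work. The bigness assertion is by design left conditional on a Segre integral positivity statement; numerical verification of that hypothesis for small $k$ would in principle be made possible by the formulas of Corollary~\ref{cor:segre_taut}, but that is not required to prove the theorem as stated.
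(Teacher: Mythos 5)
Your proposal is correct, and its overall architecture (Voisin's lemma plus Proposition~\ref{prop:speriamo}, then Theorem~\ref{thm:Stapleton}, then Proposition~\ref{prop:bigness} via Remark~\ref{rem:numerical}) coincides with the paper's. The one genuine divergence is how you establish $(k-1)$-very ampleness of $L_n$ for $n\geqslant 2$: you factor $L_n = L_{n-1}\otimes L_1$, use that $L_{n-1}$ is globally generated (which holds since $H$ is base-point free on a very general primitively polarized $K3$ with $g\geqslant 3$ -- the paper takes this for granted as well) and that $L_1$ is $(k-1)$-very ample by Voisin's result read through Proposition~\ref{prop:speriamo}(i), and then apply Proposition~\ref{prop:speriamo}(ii). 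The paper instead invokes Knutsen's numerical characterization of $(k-1)$-very ampleness for line bundles on $K3$ surfaces, checking that $L_n^2\geqslant 4(k-1)$ and that no effective divisor $D$ satisfies $2D^2\leqslant L_n\cdot D\leqslant D^2+k\leqslant 2k$, which is automatic when $\mathrm{Pic}(X)=\mathbb{Z}[H]$ and $g>2k-2$. Both routes are valid under the stated hypotheses; yours stays entirely within the paper's own toolkit (and is in fact the same mechanism the paper later uses for the Mukai--Lazarsfeld and Ulrich cases in Theorems~\ref{prop:oddvectHilb} and~\ref{prop:evenvectHilb}), while the Knutsen route gives the $(k-1)$-very ampleness of $L_n$ intrinsically from its numerics, which in principle would allow weaker assumptions on $g$ once $n\geqslant 2$. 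The stability and bigness steps are identical to the paper's, and your remark that the bigness assertion is conditional by design, with Corollary~\ref{cor:segre_taut} only needed for the small-$k$ corollaries, is accurate.
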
 

\begin{proof} For $n=1$, i.e. $L_1 = H$, C. Voisin~\cite[Lemma~2.2]{V} proves that $H^{[k]}$ is generated by global sections when $g > 2 k - 2$. 
If $n >1$, we use \cite[Theorem~1.1]{Knutsen}, which gives necessary and sufficient conditions for the line bundle $L_n$ to be $(k-1)$--very ample. Indeed 
it is a straightforward computation to show that, if $k \geqslant 2$ and $g > 2 k-2$, then $L_n^2 \geqslant 4 (k-1)$  holds and moreover that 
there are no effective divisors $D$ on $X$ such that$$2 D^2 \leqslant L_n \cdot D \leqslant D^2 + k \leqslant 2k.$$Therefore, since condition (iii) in \cite[Theorem~1.1]{Knutsen} holds true, it follows that under the numerical assumptions $k \geqslant 2$ and  $g > 2k-2$, $L_n$ is $(k-1)$--very ample for any $n \geqslant 1$. 
Thus, from Proposition \ref{prop:speriamo}--(i), it follows that $(L_n)^{[k]}$ is globally generated on $X^{[k]}$.

Since any line bundle is $\mu_H$--stable on $X$, then the rank~$k$ vector bundle $L_n^{[k]}$ is certainly $\mu_{D_k(H)}$-stable on $Y$, for any $k\geqslant 2$ and any $n \geqslant 1$, 
as it follows from Theorem~\ref{thm:Stapleton}.

Finally, since $k \geqslant 2$ and $g > 2k-2$ imply that $(L_n)^{[k]}$ is globally generated on $X^{[k]}$, from Proposition \ref{prop:bigness} we know that 
$(-1)^{2k} \int_Y s_{2k}( (L_n)^{[k]} ) =  \int_Y s_{2k}( (L_n)^{[k]} )    >0$ implies that $(L_n)^{[k]}$ is big. \end{proof}

As a direct consequence of the previous result we have the following:

\begin{corollary}\label{cor:lbHilb} 
Let $k \in \{2,\;3\}$ be an integer and let $(X,H)$ be a very--general primitively 
	polarized $K3$ surface of genus $g > 2k -2$. Let $L_n := H^{\otimes n} \in {\rm Pic}(X)$, for any integer $n \geqslant 1$. Set $Y:= X^{[k]}$. 
	Then, the rank~$k$ tautological vector bundle 
	$(L_n)^{[k]}$ is globally generated and $\mu_{D_k(H)}$--stable on $Y$. If moreover one has $n \geqslant 2$, 
	then $(L_n)^{[k]}$ is also big.
\end{corollary}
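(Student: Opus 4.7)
The global generation and $\mu_{D_k(H)}$-stability assertions are immediate from Theorem~\ref{thm:lbHilb}, since the hypotheses $k\in\{2,3\}$ and $g>2k-2$ trivially imply its assumptions. So the real content is bigness of $(L_n)^{[k]}$ for $n\geq 2$. Since $(L_n)^{[k]}$ is globally generated, hence nef by Remark~\ref{rem:numerical}, Proposition~\ref{prop:bigness} reduces bigness to the single numerical inequality
\[
	\int_Y s_{2k}\bigl((L_n)^{[k]}\bigr)>0.
\]

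The plan is to evaluate this top Segre integral by direct specialization of the formulas of Corollary~\ref{cor:segre_taut} to the case of a line bundle. Since $\mathrm{rk}(L_n)=1$ and $c_1(L_n)=nH$, one has $s_1(L_n)=-nH$ and $s_2(L_n)=n^2H^2$, so $\int_X s_1(L_n)^2 = \int_X s_2(L_n) = n^2(2g-2)$. Writing $m\coloneqq n^2(2g-2)$ and substituting $r=1$ into the formulas of Corollary~\ref{cor:segre_taut}, a routine simplification gives
\[
	\int_{X^{[2]}} s_4\bigl((L_n)^{[2]}\bigr) = 12 - 5m + \tfrac{1}{2}m^2,
	\qquad
	\int_{X^{[3]}} s_6\bigl((L_n)^{[3]}\bigr) = -160 + \tfrac{148}{3}m - 5m^2 + \tfrac{1}{6}m^3.
\]

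It remains to check positivity of these polynomials in the prescribed range. For $k=2$, the hypotheses $n\geq 2$ and $g\geq 3$ force $m\geq 16$; the quadratic $\frac{1}{2}m^2-5m+12$ has roots $5\pm 1$, so it is positive and strictly increasing on $[16,\infty)$. For $k=3$, the hypotheses $n\geq 2$ and $g\geq 5$ force $m\geq 32$; direct evaluation gives a positive value at $m=32$, and the derivative $\tfrac{1}{2}m^2-10m+\tfrac{148}{3}$ is positive there, so the cubic is positive and increasing on $[32,\infty)$.

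The only nonroutine ingredient is the bookkeeping in the specialization of Corollary~\ref{cor:segre_taut} to the rank-one case; once the polynomial expressions in $m$ are in hand, the positivity verification in the final paragraph is completely elementary. I expect no serious obstacle beyond carefully tracking the binomial coefficients when setting $r=1$.
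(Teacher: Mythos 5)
Your proof is correct and follows essentially the same route as the paper: global generation and $\mu_{D_k(H)}$-stability are quoted from Theorem~\ref{thm:lbHilb}, and bigness is reduced via Proposition~\ref{prop:bigness} to positivity of the top Segre number, computed by specializing Corollary~\ref{cor:segre_taut} to rank one. Your expanded polynomials agree with the paper's (for $k=3$ your expression matches the paper's factored form $\tfrac{4}{3}(g-1)^3\bigl(n^2-\tfrac{4}{g-1}\bigr)\bigl(n^2-\tfrac{5}{g-1}\bigr)\bigl(n^2-\tfrac{6}{g-1}\bigr)$), and your elementary evaluation-plus-monotonicity check on $m\geq 16$, resp.\ $m\geq 32$, simply replaces the paper's factorization argument.
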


\begin{proof} Since $g > 2k-2$, from Theorem \ref{thm:lbHilb} one immediately deduces global generation and $\mu_{D_k(H)}$--stability of $(L_n)^{[k]}$. 
	For the rest of the statement, if $k=2$, Corollary~\ref{cor:segre_taut} gives that 
	$$   \int_Y s_4((L_n)^{[k]})    = 2 (n^4 (g-1)^2 - 5 n^2 (g-1) + 6).$$Thus, $n^2 > \frac{3}{g-1}$ implies that 
	$\int_Y s_4((L_n)^{[k]})    $ is certainly positive. 
	Since $k=2$ and $g > 2k-2 = 2$, notice that $\frac{3}{g-1} < 3$. Therefore, if $n \geqslant 2$, $   \int_Y s_4((L_n)^{[k]})     > 0$ holds true. If otherwise $k=3$, by Corollary~\ref{cor:segre_taut} we get 
	$$ 
	\int_Y s_6((L_n)^{[k]})     = \frac{1}{3} \left( 4 n^6 (g-1)^3 + 3 n^4 (g-1)^2 + 684 n^2 (g-1) - 480 \right).
	$$ 
	This equals $\frac 43 (g-1)^3 \left(n^2 - \frac{4}{g-1} \right)\left( n^2 - \frac{5}{g-1}\right) \left( n^2 - \frac{6}{g-1}\right)$, which is positive for $g >4$ and $n \geq 2$.  
\end{proof}


\subsection{Big and stable bundles on $X^{[k]}$ arising from Mukai--Lazarsfeld bundles on $X$ a very--general $K3$}\label{tautHilbodd} 
Taking into account what proved in \S\;\ref{oddrank}, here we have the following: 

\begin{theorem}\label{prop:oddvectHilb} Let $k \geqslant 2$ be a positive integer. Let $(X,H)$ be a very--general polarized $K3$ surface of genus $g > 2k-2$. 
Let $(g, r-1, d)$ be any triple of positive integers such that $ d < 2g-2$ and $\rho(g,r-1,d) \geqslant 0$, where $\rho(g, r-1, d)$ the {\em Brill--Noether number} 
as in \eqref{eq:rhoA}. Set $Y:= X^{[k]}$.
Then, for any Mukai--Lazarsfeld rank~$r$ vector bundle $E$ on $X$ as in Theorem \ref{prop:oddvect}, the tautological 
rank~$kr$ vector bundle $(E\otimes H)^{[k]}$ is globally generated and $\mu_{D_k(H)}$--stable on $Y$.
If moreover $   \int_Y s_{2k} ((E\otimes H)^{[k]})    >0$, then  $(E\otimes H)^{[k]}$ is also big. 
\end{theorem}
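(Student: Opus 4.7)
The strategy is to reduce each of the three claims—global generation, $\mu_{D_k(H)}$-stability, and bigness—to results already established in the paper, essentially mirroring the argument pattern used for Theorem~\ref{thm:lbHilb} but with the Mukai--Lazarsfeld bundle $E$ replacing a line bundle.

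First I would handle global generation by invoking Proposition~\ref{prop:speriamo}(ii) applied to $G = E$ and $L = H$. By Theorem~\ref{prop:oddvect}, the Mukai--Lazarsfeld bundle $E$ is globally generated on $X$; it remains to check that $L_1 = H$ is $(k-1)$-very ample on $X$. Under the hypothesis $g > 2k-2$, this is exactly the content used in the proof of Theorem~\ref{thm:lbHilb}, either via Voisin's lemma or via the numerical criterion of Knutsen. Hence $E \otimes H$ is $(k-1)$-very ample on $X$, and Proposition~\ref{prop:speriamo}(i) then yields that $(E \otimes H)^{[k]}$ is globally generated on $Y$.

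Next, for the stability statement, I would observe that Theorem~\ref{prop:oddvect} supplies the $\mu_H$-stability of $E$ on $X$, and this property is preserved under tensoring by the line bundle $H$; thus $E \otimes H$ is $\mu_H$-stable on $X$. Since $E \otimes H \not\cong \mathcal{O}_X$, Theorem~\ref{thm:Stapleton} applies verbatim and gives that the tautological bundle $(E \otimes H)^{[k]}$ is $\mu_{D_k(H)}$-stable on $Y$.

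Finally, for bigness, the setup is clean: global generation of $(E \otimes H)^{[k]}$ together with Remark~\ref{rem:numerical} ensures nefness, so Proposition~\ref{prop:bigness} applies. Since $\dim Y = 2k$ is even, the sign $(-1)^{2k} = 1$, and the bigness criterion becomes exactly the positivity of the top Segre integral $\int_Y s_{2k}((E \otimes H)^{[k]}) > 0$, which is assumed. I expect the main obstacle not to lie in the logical structure of the proof---each step is a direct application of a previously established tool---but rather in the implicit effectiveness of the hypothesis on the top Segre integral: verifying this positivity in concrete cases would require the explicit recursive formulas of Proposition~\ref{prop:segre_taut} and Corollary~\ref{cor:segre_taut} combined with the numerical invariants $(E1)$--$(E5)$ of the Mukai--Lazarsfeld bundle, which is presumably why the authors state this as a numerical hypothesis rather than proving it in full generality here.
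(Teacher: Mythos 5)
Your proposal is correct and follows essentially the same route as the paper's proof: global generation via Proposition~\ref{prop:speriamo}(i)--(ii) together with the $(k-1)$-very ampleness of $H$ for $g>2k-2$, stability via Theorem~\ref{thm:Stapleton} after noting that $\mu_H$-stability of $E$ is preserved under twisting by $H$, and bigness via Proposition~\ref{prop:bigness} with $(-1)^{2k}=1$. No gaps; the explicit remark that $E\otimes H\not\cong\mathcal{O}_X$ is a small (correct) detail the paper leaves implicit.
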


\begin{proof} From Theorem \ref{prop:oddvect}, any Mukai--Lazarsfeld vector bundle $E = E_{C,A}$ constructed therein is $\mu_H$--stable; so it is 
$E \otimes H$. Therefore, the rank~$rk$ vector bundle $(E\otimes H)^{[k]}$ is certainly $\mu_{D_k(H)}$-stable on $Y$, for any $k \geqslant 2$, 
as it follows from Theorem~\ref{thm:Stapleton}.

Any such $E = E_{C,A}$ is also globally generated; since, by assumption, we have $g > 2k-2$ then, from \cite[Lemma~2.2]{V} or following the arguments in the 
proof of Theorem \ref{thm:lbHilb}, the line bundle $H$ is $(k-1)$--very ample on $X$. Therefore, from Proposition \ref{prop:speriamo}--(ii), 
$E \otimes H$ is $(k-1)$-very ample on $X$ so,  by Proposition \ref{prop:speriamo}--(i), $(E\otimes H)^{[k]}$ is globally generated on~$Y$.  
Thus, from Proposition \ref{prop:bigness}, $(-1)^{2k} \int_Y s_{2k} ((E\otimes H)^{[k]}) = \int_Y s_{2k} ((E\otimes H)^{[k]})    >0$ 
implies that  $(E\otimes H)^{[k]}$ is big. \end{proof}

Recall that Mukai-Lazarsfeld bundles $E = E_{C,A}$ as above are such that
$$
{\rk}(E) = r, \;\; c_1(E) = H, \;\;    \int_X c_{2} (E)    = d.$$
Therefore, from \eqref{eq:dualchern}, we have
$$\rk(E \otimes H) = r,   \;\; c_1(E\otimes H) = r H + c_1(E) = (r+1) H, \;\; \int_X c_{2} (E \otimes H)    = 2 \left(\binom{r}{2} +(r-1)\right) (g-1) +d.$$Morevover, from
\eqref{eq:dualsegre}, we have 
$$s_1 (E \otimes H) = - c_1(E \otimes H), \;\; s_2(E \otimes H) = c_1(E \otimes H)^2 - c_2(E \otimes H).$$Using these expressions, one has:

\begin{corollary}\label{cor:oddvectHilb}  Let $k \in \{2,\;3\}$ be an integer and let $(X,H)$ be a very--general primitively 
polarized $K3$ surface of genus $g > 2k -2$. Let $(g, r-1, d)$ be any triple of positive integers such that $ d < 2g-2$ and $\rho(g,r-1,d) \geqslant 0$, where 
$\rho(g, r-1, d)$ the {\em Brill--Noether number} as in \eqref{eq:rhoA}. Then the rank~$rk$ vector bundle $(E \otimes H)^{[k]}$ is globally generated, $\mu_{D_k(H)}$--stable and big on $X^{[k]}$ for $k=2,3$. 
\end{corollary}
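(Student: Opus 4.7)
The global generation and $\mu_{D_k(H)}$-stability assertions are immediate consequences of Theorem~\ref{prop:oddvectHilb}, so only the bigness statement requires work. By the same theorem together with Proposition~\ref{prop:bigness}, proving bigness of $(E\otimes H)^{[k]}$ reduces to verifying the numerical inequality
\[
\int_{X^{[k]}} s_{2k}((E\otimes H)^{[k]}) > 0 \qquad \text{for } k\in\{2,3\},
\]
and this is what the plan addresses.

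Setting $F \coloneqq E\otimes H$, the Mukai--Lazarsfeld data $\rk(E)=r$, $c_1(E)=H$, $\int_X c_2(E)=d$ together with $H^2=2(g-1)$ and formulas \eqref{eq:dualchern}--\eqref{eq:dualsegre} yield $\rk(F)=r$, $\int_X s_1(F)^2 = 2(r+1)^2(g-1)$, and
\[
S \coloneqq \int_X s_2(F) = (r^2+3r+4)(g-1)-d.
\]
I would substitute these three invariants into the closed formulas of Corollary~\ref{cor:segre_taut} with rank parameter $r$, obtaining an explicit polynomial expression for $\int_{X^{[k]}} s_{2k}(F^{[k]})$ in the single scalar variable $S$: quadratic for $k=2$ with positive leading coefficient $\tfrac{1}{2}$, and cubic for $k=3$ with positive leading coefficient $\tfrac{1}{6}$, the remaining coefficients being polynomials in $r$ and $g$.

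The final step is to verify positivity of these polynomials over the admissible range of parameters dictated by $g>2k-2$, $d<2g-2$, and $\rho(g,r-1,d)=g-r(r-1+g-d)\geq 0$. Rewriting $\rho\geq 0$ as $d \geq g(1-\tfrac{1}{r})+r-1$ pins $d$ to a bounded interval and hence bounds $S$ from below by an explicit quantity growing like $(r^2+3r)(g-1)$. The strategy is to show that the dominant monomial $\tfrac{1}{2}S^2$ (respectively $\tfrac{1}{6}S^3$) controls the sign throughout this range, either by showing $S$ lies above every real root of the polynomial in question or by lower-bounding the full polynomial term by term using elementary estimates in~$r$ and $g$.

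The main obstacle is the cubic case $k=3$: one must rule out that the cubic in $S$ has any real root inside the admissible range. I expect this to be handled by combining the lower bound on $S$ coming from the Brill--Noether constraint with explicit coefficient estimates, potentially together with a finite check at the smallest values $r$ with $g$ close to $2k-1$, where the inequalities are tightest. Since all Mukai--Lazarsfeld bundles appearing are covered by a single family of admissible triples $(g,r-1,d)$, this case analysis should be elementary but somewhat tedious.
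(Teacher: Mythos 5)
Your reduction is the same as the paper's: global generation and stability come from Theorem~\ref{prop:oddvectHilb}, and bigness reduces via Proposition~\ref{prop:bigness} and Corollary~\ref{cor:segre_taut} to the positivity of $\int_{X^{[k]}}s_{2k}((E\otimes H)^{[k]})$ for $k=2,3$; your invariants $\int_X s_1(E\otimes H)^2=2(r+1)^2(g-1)$ and $S=\int_X s_2(E\otimes H)=(r^2+3r+4)(g-1)-d$ are correct. Where you genuinely diverge is the final positivity check. The paper keeps the Segre number as a polynomial $p(d,r,g)$, views its zero locus as a conic in the $(g,d)$-plane, shows it is a parabola disjoint from the line $d=2g-2$, and then uses connectedness together with continuity in the real parameter $r$ and a single sample check at $r=3$ to place the parabola in the half-plane $d>2g-2$. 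You instead collapse everything to a one-variable polynomial in $S$ and exploit the lower bound $S>(r+1)(r+2)(g-1)$: for $k=2$ the quadratic $\tfrac12 S^2-\tfrac{r^2+3r+3}{2}S+\dots$ has its vertex well below this bound, so positivity at $S=(r+1)(r+2)(g-1)$ (an elementary inequality in $r$ and $g$) plus monotonicity finishes; for $k=3$ the same scheme works, since for $g-1\geq 4$ the derivative of the cubic is positive on $[S_0,\infty)$ with $S_0=(r+1)(r+2)(g-1)$, and $P(S_0)\geq 0$ reduces to an explicit polynomial inequality in $r$ and $g-1$ whose leading behaviour is positive — so your ``main obstacle'' does close, without needing any finite case check beyond verifying that one inequality. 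Your route buys a fully elementary, checkable calculus argument and avoids the paper's continuity-in-$r$ and sample-point step; the paper's conic picture buys a quick conceptual reason why only the boundary line $d=2g-2$ matters. One small correction: the lower bound on $S$ comes from $d<2g-2$, not from $\rho(g,r-1,d)\geq 0$ (which gives a lower bound on $d$, hence an \emph{upper} bound on $S$, and is in fact not needed for positivity — consistently with the paper, which only uses $d<2g-2$; $\rho\geq 0$ enters only through the existence of the Mukai--Lazarsfeld bundle in Theorem~\ref{prop:oddvect}). Since the bound you actually quote, of order $(r^2+3r)(g-1)$, is the one coming from $d<2g-2$, this slip does not affect the argument.
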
 

\begin{proof} This is a direct consequence of Theorem \ref{prop:oddvectHilb} and of Corollary~\ref{cor:segre_taut}, namely one needs to show that, for $k=2$, $ \int_Y s_4((E\otimes H)^{[k]}) > 0$ (respectively, $\int_Y s_6((E\otimes H)^{[k]})> 0$ for $k=3$). In order to prove bigness, the strategy is similar for $k=2$ and $k=3$.  Here we illustrate the case $k=2$, the other case can be dealt with analogously.
The numerical condition coming from the positivity of the Segre classes is:
\[
	\int_{X^{[2]}}s_4\left( \left(E \otimes H\right)^{[2]}\right) = \alpha_{0, 0} + \alpha_{1,0} g + \alpha_{0, 1} d + \alpha_{1, 1} g d + \alpha_{2, 0} g ^2 + \frac{1}{2} d^2 >0,
\]
\begin{align*}
	\alpha_{0, 0} = \frac{1}{2}\left(4 r^4+23 r^3+53 r^2+58 r+30\right), \qquad
	\alpha_{1, 0} =-\frac{1}{2}\left(4 r^4+23 r^3+ 59 r^2+76 r+46\right), \\
	\alpha_{0, 1} = \frac{1}{2} \left( 3 r^2 + 9 r+11\right),	\qquad
	\alpha_{1, 1} = -\left(r^2+3 r+4\right),\qquad
	\alpha_{2, 0} = \frac{1}{2}(r^2+3r+4)^2.
\end{align*}
This polynomial expression $p(d,r,g)$ in the variables $d,r,g$ has degree $2$ in $d$ and in $g$. We look at the locus $\{ (d,g): p(d,r,g)=0\}$ as a plane conic in the real plane $(g,d)$ whose coefficients depend on $r$. By direct inspection, the conic is a parabola for any value of $r$. In fact, if we perform the coordinate change 
$$
\left\{
\begin{array}{l}
g= G - (r^2+3r+4)D, \\
d=(r^2+3r+4)G+D,
\end{array}
\right.
$$
we obtain the locus in the plane $(G,D)$ which is defined by the vanishing of the  polynomial:
\[
\beta_{0, 0} + \beta_{1, 0} G +  \beta_{0, 1} D + \beta_{0, 2} D^2
\]
whose coefficients depend on $r$:
\begin{align*}
\beta_{0, 1} &= \frac{1}{2}\left(4 r^4+23 r^3+53 r^2+ 58 r+30\right),
&\beta_{1, 0} &= -\frac{1}{2}(r+2)(r+1)^3\\
\beta_{0, 1} &= \frac{1}{2}\left(4r^6+35 r^5+144 r^4+345 r^3+513 r^2+451 r+195\right), 
&\beta_{0, 2} &= \frac{1}{2} (r^4+6 r^3+17 r^2+24 r+17)^2.
\end{align*}

In order to show that the top Segre class is always positive under the assumption $d < 2g-2$, we prove that the parabola is contained in the half-plane $d >2g-2$. For these purposes, we first show that the conic takes positive values along the line $d=2g-2$.  In other words, the parabola does not intersect this line. By connectedness, either the parabola is contained in the half-plane $d < 2g-2$, or it is contained in the half-plane $d>2g-2$. By continuity of the real parameter $r$ and the canonical form of the parabola, if the parabola is contained in one of the half-planes for one value of $r$, then it is contained in the same half-plane for every value of $r$. Therefore, it suffices to check the sign at one point of the parabola for one value of $r$. We find a point for $r=3$ that is contained in the half-plane $d>2g-2$. Hence the parabola are always contained in this half-plane, so the Segre number above is always positive. Thus the claim follows.

\end{proof}


\subsection{Big and stable bundles on $X^{[k]}$ arising from Ulrich bundles on $X$ a very general $K3$}\label{tautHilbeven} Taking into account what proved in \S\ref{evenrank}, 
here we have the following: 

\begin{theorem}\label{prop:evenvectHilb} Let $k \geqslant 2$ and $h > 2k-3$ be positive integers. Let $(X, H)$ be a very--general polarized $K3$ surface, with polarization $H$ of 
genus $g = h+1$. Set $Y:= X^{[k]}$.
For every integer $a \geqslant 1$, consider any $\mu_H$--stable Ulrich bundle $E$ of rank~$2a$ on $X$ as in Theorem~\ref{thm:ultrich}. 
Thus, the tautological vector bundle $(E\otimes H)^{[k]}$ of rank $2ka$  is globally generated and $\mu_{D_k(H)}$--stable on $Y$. 
If moreover $ \int_Y s_{2k} (E\otimes H)^{[k]}    >0$, then  $(E\otimes H)^{[k]}$ is also big.    
\end{theorem}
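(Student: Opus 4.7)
The proof plan is entirely parallel to that of Theorem~\ref{prop:oddvectHilb}, with the Mukai--Lazarsfeld bundle replaced by an Ulrich bundle. The three points to check are $\mu_{D_k(H)}$-stability, global generation, and the reduction of bigness to the asserted Segre-class positivity.

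First, I would address $\mu_{D_k(H)}$-stability of $(E \otimes H)^{[k]}$. By hypothesis (cf.\ Theorem~\ref{thm:ultrich}) the Ulrich bundle $E$ is $\mu_H$-stable on $X$, and since slope-stability is preserved by tensor product with a line bundle, $E \otimes H$ is $\mu_H$-stable of rank $2a$. Theorem~\ref{thm:Stapleton} then implies that the tautological bundle $(E \otimes H)^{[k]}$ of rank $2ka$ is $\mu_{D_k(H)}$-stable on $Y = X^{[k]}$.

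Next, I would establish global generation of $(E \otimes H)^{[k]}$. Recall from \S\ref{evenrank} that every Ulrich bundle is $0$-regular in the sense of Castelnuovo--Mumford, hence globally generated on $X$; so $E$ is globally generated. The numerical assumption $h > 2k-3$ means $g = h+1 > 2k-2$, so by \cite[Lemma~2.2]{V} (as used in Theorem~\ref{thm:lbHilb}, and consistent with \cite[Theorem~1.1]{Knutsen}) the polarization $H$ is $(k-1)$-very ample on $X$. I can then apply Proposition~\ref{prop:speriamo}(ii) with $G = E$ and $L = H$ to conclude that $E \otimes H$ is $(k-1)$-very ample on $X$, and Proposition~\ref{prop:speriamo}(i) converts this into global generation of $(E \otimes H)^{[k]}$ on $Y$.

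Finally, for bigness, global generation of $(E \otimes H)^{[k]}$ implies nefness by Remark~\ref{rem:numerical}. Since $\dim Y = 2k$, Proposition~\ref{prop:bigness} then says $(E \otimes H)^{[k]}$ is big if and only if $(-1)^{2k} \int_Y s_{2k}\bigl((E \otimes H)^{[k]}\bigr) = \int_Y s_{2k}\bigl((E \otimes H)^{[k]}\bigr) > 0$, which is precisely the hypothesis made. Honestly there is no real obstacle in this proof: every ingredient has already been assembled (Theorem~\ref{thm:ultrich} for the Ulrich bundles, Theorem~\ref{thm:Stapleton} for tautological stability, Proposition~\ref{prop:speriamo} for the transfer of $(k-1)$-very ampleness through a tensor product, and Proposition~\ref{prop:bigness} for the numerical criterion). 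The only point where care is needed is verifying that the Ulrich bundles produced in~\cite{AFO} really are globally generated, but this follows from $0$-regularity via \cite[Theorem~1.8.5(i)]{Laz1}, as already noted in \S\ref{evenrank}. The genuinely nontrivial content of such a statement, namely effective positivity of the top Segre class, is left as an unverified hypothesis here and must be checked case by case (as in Corollary~\ref{cor:oddvectHilb} for $k=2,3$), rather than proved in this theorem.
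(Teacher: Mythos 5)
Your proposal is correct and follows essentially the same route as the paper's proof: stability of $E\otimes H$ plus Theorem~\ref{thm:Stapleton} for $\mu_{D_k(H)}$-stability, global generation of the Ulrich bundle combined with $(k-1)$-very ampleness of $H$ (from $g>2k-2$) and Proposition~\ref{prop:speriamo} for global generation of $(E\otimes H)^{[k]}$, and Proposition~\ref{prop:bigness} to reduce bigness to the assumed positivity of the top Segre class. No gaps; the observation that Segre positivity is deferred to the corollaries is exactly how the paper treats it.
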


\begin{proof} From Theorem \ref{thm:ultrich}, any Ulrich bundle $E$ considered therein is $\mu_H$--stable; so it is 
$E \otimes H$. Thus, the rank~$2ak$ vector bundle $(E\otimes H)^{[k]}$ is certainly $\mu_{D_k(H)}$-stable on $Y$, for any $k \geqslant 2$, as it follows 
from Theorem~\ref{thm:Stapleton}.

Since $E$ is an Ulrich bundle on $X$, in particular it is globally generated. From the  assumption $h > 2k-3$, it follows that 
$g = h+1 > 2k-2$ therefore, from \cite[Lemma~2.2]{V} (or following the arguments in the proof of Theorem \ref{thm:lbHilb}), 
$H$ is $(k-1)$--very ample on $X$. Thus, from Proposition \ref{prop:speriamo}--(ii), $E \otimes H$ is $(k-1)$-very ample on $X$ and so, by 
Proposition \ref{prop:speriamo}--(i), the tautological bundle 
$(E\otimes H)^{[k]}$ is globally generated on $Y$. Thus, from Proposition \ref{prop:bigness}, 
$(-1)^{2k}    \int_Y s_{2k} (E\otimes H)^{[k]}    = \int_Y s_{2k} (E\otimes H)^{[k]}    >0$ 
implies that  $(E\otimes H)^{[k]}$ is big on $Y$. \end{proof}

Similarly as for Mukai--Lazarsfeld vector bundles, any Ulrich bundle $E$ as above is such that
$$
{\rk}(E) = 2a, \;\; c_1(E) = 3aH, \;\;    \int_X c_2(E)    = 9a^2 h - 4a(h-1). 
$$
Therefore, from \eqref{eq:dualchern}, we have
$${\rk}(E \otimes H) = 2a,   \;\; c_1(E\otimes H)  = 2a H + c_1(E) = 5a H, \;\; \int_X c_2(E\otimes H)    = 9a^2 h - 4a(h-1) + 8a(2a-1)h.$$Morevover, from
\eqref{eq:dualsegre}, we have 
$$s_1 (E \otimes H) = - c_1(E \otimes H), \;\; s_2(E \otimes H) = c_1(E \otimes H)^2 - c_2(E \otimes H).$$Using these expressions, one has: 
  
\begin{corollary}\label{cor:evenvectHilb}  Let $k \in \{2,\;3\}$ and $h > 2k-3$ be integers. Let $(X, H)$ be a very--general polarized $K3$ surface, with polarization $H$ of 
genus $g = h+1$. For every integer $a \geqslant 1$, consider a globally generated and $\mu_H$--stable Ulrich bundles $E$ of rank~$2a$ on $X$ 
as in Theorem \ref{thm:ultrich}. Set $Y:= X^{[k]}$. Then the rank~$2ak$ vector bundle $(E \otimes H)^{[k]}$ is globally generated, $\mu_{D_k(H)}$--stable and big on $Y$ for $k=2,3$. 
\end{corollary}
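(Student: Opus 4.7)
Global generation and $\mu_{D_k(H)}$--stability of $(E\otimes H)^{[k]}$ are already provided by Theorem~\ref{prop:evenvectHilb}, since the hypothesis $h>2k-3$ gives $g=h+1>2k-2$, which is exactly what is needed to invoke that theorem. By Proposition~\ref{prop:bigness}, bigness is therefore equivalent to the numerical condition
\[
\int_Y s_{2k}\bigl((E\otimes H)^{[k]}\bigr)>0, \qquad k\in\{2,3\}.
\]
This is the sole statement left to verify, and my plan is to follow the same strategy as in the proof of Corollary~\ref{cor:oddvectHilb}, namely to plug explicit data into the closed formulas of Corollary~\ref{cor:segre_taut}.

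The first step is to compute the low--order Segre classes of $E\otimes H$. From the Chern data of an Ulrich bundle recalled just before the statement, together with~\eqref{eq:dualchern}, one has $\rk(E\otimes H)=2a$, $c_1(E\otimes H)=5aH$ and $c_2(E\otimes H)=25a^2 h-12ah+4a$; then~\eqref{eq:dualsegre} yields
\[
\int_X s_1(E\otimes H)^2 = 50\,a^2 h, \qquad
\int_X s_2(E\otimes H) = 25\,a^2 h+12ah-4a.
\]
Both quantities are manifestly positive for $a\geq 1$ and $h\geq 1$, which is the starting point for the positivity of the top Segre class.

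The second step is to substitute these expressions, together with $r=2a$, into the formulas for $\int_Y s_4(F^{[2]})$ and $\int_Y s_6(F^{[3]})$ given by Corollary~\ref{cor:segre_taut}. For each $k\in\{2,3\}$ this produces a polynomial $P_k(a,h)$ in the two variables $a,h$ with rational coefficients, of degree $2$ in $h$ for $k=2$ and of degree $3$ in $h$ for $k=3$. The remaining task is then a purely numerical positivity statement: show that $P_k(a,h)>0$ for all integers $a\geq 1$ and $h>2k-3$.

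The last step, namely proving positivity of $P_k(a,h)$, is where I expect the main obstacle to lie. Unlike in Corollary~\ref{cor:oddvectHilb} there is no auxiliary inequality such as $d<2g-2$ to handle, and one has the advantage that $c_1(E\otimes H)$ and $c_2(E\otimes H)$ are both expressed as explicit polynomials in $(a,h)$, so $P_k$ is a concrete polynomial. To deal with this I would proceed as follows: collect $P_k(a,h)$ as a polynomial in $h$ with coefficients in $\bQ[a]$, show that the leading coefficient is positive for every $a\geq 1$ (this follows from the dominant contribution of the $s_2(E\otimes H)^k$ term), and then bound the remaining terms. If this direct approach leaves borderline regimes, one can mimic the change of variables used in the proof of Corollary~\ref{cor:oddvectHilb}: because $P_k$ is of low degree in $h$, the real curve $\{P_k(a,h)=0\}$ is, for each fixed $a$, a conic (for $k=2$) or a cubic (for $k=3$) in suitable coordinates, and one can check that it stays inside the half--plane $h>2k-3$ by evaluating $P_k$ at one convenient point (say $a=1$, $h=2k-2$) for a single value of $a$ and invoking continuity in $a$. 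Once $P_k(a,h)>0$ is established in the stated range, bigness of $(E\otimes H)^{[k]}$ follows from Proposition~\ref{prop:bigness}, completing the proof.
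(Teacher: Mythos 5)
Your plan is correct and follows essentially the same route as the paper: global generation and $\mu_{D_k(H)}$--stability come from Theorem~\ref{prop:evenvectHilb}, and bigness is reduced to the positivity of the explicit polynomial obtained by substituting $r=2a$, $\int_X s_1(E\otimes H)^2=50a^2h$ and $\int_X s_2(E\otimes H)=25a^2h+12ah-4a$ (your Chern/Segre data agree with the paper's) into the closed formulas of Corollary~\ref{cor:segre_taut}. The paper settles this positivity by an elementary study of the maximal real root of the resulting degree-$k$ polynomial in $g$, showing it is always smaller than $2$, which is the same kind of leading-coefficient/root analysis you propose.
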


\begin{proof} This is a direct consequence of Theorem \ref{prop:evenvectHilb} and the numerical conditions on the top Segre classes. For (i), the positivity of the Segre number $ \int_{X^{[4]}} s_4((E\otimes H)^{[2]})  > 0$ translates, after simplification, into the following inequality:
$$
 (60 + 521 a+ 1212 a^2+ 841 a^3)
- (36 + 581 a +  1748 a^2 + 1450 a^3) g
+ a (25 a+12)^2 g^2 >0.
$$
The left--hand--side member of the previous inequality is a degree~$2$ polynomial in the indeterminate $g$, with coefficients depending on $a$. An elementary numerical study of the real maximal root of this polynomial - as a function of $a$ - shows that the maximal root is always smaller than $2$, under the assumption $g >2$. As for (ii), the condition  $ \int_{X^{[6]}}  s_6((E\otimes H)^{[3]})    > 0$ is equivalent to the positivity of a degree--$3$ polynomial in $g$, whose coefficients depend on~$a$, namely: 
	\begin{align*}
     \int_{X^{[6]}}  s_6((E\otimes H)^{[3]}) &= \alpha_0(a) + \alpha_1(a) g + \alpha_2(a) g^2 + \alpha_3(a) g^3\\
    \alpha_0(a)&= -\frac{1}{2}(11979 a^5+28116 a^4+25173 a^3+10678 a^2+2132 a+160) a\\	
	\alpha_1(a)&= \frac{1}{6} (81675 a^5+167652 a^4+126918 a^3+42834 a^2+6020 a+240) a\\
	\alpha_2(a)&= -\frac{1}{2} (25 a+12) (825 a^3+1048 a^2+389 a+36) a^2\\
	\alpha_3(a)&= \frac{1}{6} (25 a+12)^3 a^3
	\end{align*}	
	Similarly as above, a numerical study of its maximal root, using the Cardan--Tartaglia formula shows that the maximal root of this polynomial is always smaller than $2$ under our assumptions, hence the result.
\end{proof}

\begin{remark}
Dragos Oprea informed us that the general statements given in Theorems~\ref{thm:lbHilb}, \ref{prop:oddvectHilb} \& \ref{prop:evenvectHilb} and generating series of Segre integrals of Marian--Oprea--Pandharipande~\cite{MOP}  can be  used to extend the numerical computations in Corollaries~\ref{cor:lbHilb}, \ref{cor:oddvectHilb} \& \ref{cor:evenvectHilb} to $k\geq 4$ 
and to obtain similar positivity results on the Hilbert schemes of points of abelian, bielliptic or Enriques surfaces. These computations appear in~\cite{Oprea}.
\end{remark}

Data sharing is not applicable to this article as no datasets were generated or analysed during the current study. 
On behalf of all authors, the corresponding author states that there is no conflict of interest. 


\end{document}